\newcommand{\Hecke}{\mathcal{H}}
\newcommand{\Hom}{\mathrm{Hom}}
\newcommand{\id}{\mathrm{Id}}
\newcommand{\ldes}{\mathcal{L}}
\newcommand{\tif}{\text{ if }}
\newcommand{\tand}{\text{ and }}
\newcommand{\T}{T}
\newcommand{\C}{\tilde{C}}
\newcommand{\Tau}{\mathcal{T}}
\newcommand{\trho}{\tilde{\rho}}
\newcommand{\tphi}{\tilde{\phi}}
\newcommand{\tTheta}{\tilde{\Theta}}
\newcommand{\cm}[3]{\sum\limits_{j=0}^{\lfloor \frac{#1-1}{2} \rfloor}  (-1)^j \binom{#1-1-j}{j} ( \dots #3 #2 #3)_{#1-2j}}
\newtheorem{theorem}{Theorem}[section]
\newtheorem{lemma}[theorem]{Lemma}
\newtheorem{proposition}[theorem]{Proposition}
\newtheorem{corollary}[theorem]{Corollary}
\newtheorem{remark}[theorem]{Remark}
\newtheorem{definition}[theorem]{Definition}
 \newtheorem{example}[theorem]{Example}
 \numberwithin{equation}{section}
\begin{document}

\markboth{Alexander Diaz-Lopez}
{Representations of Hecke algebras on quotients of path algebras}

%
%

\title{Representations of Hecke algebras on quotients of path algebras
}

\author{Alexander Diaz-Lopez}


\maketitle


\begin{abstract}
Let $(W,S)$ be a Coxeter system. A $W$-graph encodes a representation of the Hecke algebra $\mathcal{H}$ of $W$. We construct universal representations of multi-parameter Hecke algebras on certain quotients of path algebras, and study their relationships with $W$-graph representations. We also study the quotients of path algebras on their own, motivated by one example where the quotient path algebra is isomorphic to an ideal of Lusztig asymptotic Hecke algebra. Finally, we describe a method to obtain a generating set for the ideals by which we quotient the path algebras.
\end{abstract}



\section{Introduction}

Given a Coxeter system $(W,S)$, a $W$-graph \cite{KL1} is a combinatorial object that encodes a representation of the Hecke algebra $\mathcal{H}$ of $W$ over a commutative unital ring $A$. In one version, a $W$-graph consists of a quiver (directed graph) with vertices labeled by subsets of $S$, and edges labeled by elements of $A$ subject to stringent conditions. These conditions ensure that certain endomorphisms (defined in terms of this data) of the free $A$-module on the set of vertices give rise to a representation of $\mathcal{H}$. Recent developments on $W$-graphs include \cite{HN-WGI, HY-IWG,HY-IWG2, N-WGI2, JS,JS-AFTFWG,YIN-IWGFSG}.

In this paper we consider the relationships between representations (denoted $\tau$) attached to $W$-graphs for multi-parameter Hecke algebras  and universal representations (denoted $\trho$)  on quotients of path algebras attached to arbitrary quivers (called pre-$D$-graphs) with vertices labeled by subsets of $S$, subject to only mild finiteness conditions. The precise meaning of ``universal" is given in Section \ref{sec:univ}. Given a pre-$D$-graph, the universal representation $\trho$ acts on, roughly, the largest quotient of the path algebra on which the analogue of the formulae defining $W$-graph representations affords an actual representation. An important subtlety is that in the construction, $A$ is a free module over a subalgebra $A_0$, and the construction is done so that the representation is defined on $\mathcal{R}:=A\otimes_{A_0}\mathcal{R}_0$ where $\mathcal{R}_0$ is an $A_0$-algebra (and a quotient of a path algebra) produced by the construction. We construct an equivariant map between the representations $\tau$ and $\trho$ (Theorem \ref{thm:eqmap}) and state a more precise result (Theorem \ref{thm:eqmapmorita}) closely relating $\trho$ to a representation Morita equivalent to $\tau$.

We also study the quotient path algebras $\mathcal{R}$ on their own. The main motivation is given by a special example (Example \ref{subsec:AHA}) in which the quotient path algebra is isomorphic to the two-sided ideal of the asymptotic Hecke algebra associated to the two-sided cells of $a$-value 1 (see \cite{L1}, Chapter 18). This example suggest a stronger connection (in general) between quotient path algebras and the asymptotic Hecke algebra. We show three formal properties of $\mathcal{R}$ which are analogous to known (non-formal) properties of the asymptotic Hecke algebra and special representations of Weyl groups. First, given two specializations $\mathcal{H}', \mathcal{H}''$ of $\mathcal{H}$ to algebras over a commutative unital ring $B$, the construction gives rise to a canonical $(\mathcal{H}',\mathcal{H}'')$-bimodule structure on $B\otimes_{A_0}\mathcal{R}_0$ (Theorem \ref{thm:bimod}). Secondly, given a pre-$D$-graph subject to mild finiteness conditions, the opposite ring of $\mathcal{R}$ arises by applying the same construction to a dual pre-$D$-graph (Theorem \ref{thm:Rdual}). Thirdly, under suitable (strong) conditions, when the pre-$D$-graph comes from a $W$-graph, the $W$-graph representation $\tau$ appears at least once in each left ideal of $\mathcal{R}$ generated by an idempotent corresponding to a vertex of the quiver (Corollary \ref{cor:Uxsurjective}).

As is the case for the asymptotic Hecke algebra, it is hard to make computations or give an explicit description of these quotient path algebras. However, we describe a method to obtain a generating set for the ideal by which we quotient the path algebra. We first compute the generators of the ideals associated to relatively simple pre-$D$-graphs (called universal pre-$D$-graphs) and their images under a certain homomorphism give a generating set for the original ideal (Theorem \ref{thm:psimap}).

The paper is organized in the following way.
 In Section \ref{sec:prelim} we discuss notation and some known properties, mostly on path algebras. 
In Section \ref{sec:IHAD} we extend the notion of $W$-graphs to ``pre-$D$-graphs" and ``$D$-graphs" (which are essentially $W$-graphs for multi-parameter Hecke algebras).
In Section \ref{sec:QPA} we discuss the main topic of this paper, quotient path algebras, and construct representations of Hecke algebras on them.
In Section \ref{sec:EandR} we construct an equivariant map between the representations of Hecke algebras coming from $D$-graphs and those defined on quotient path algebras. 
In Section \ref{universalPDG} we describe a method to compute a generating set for the ideals by which we quotient the path algebras. We then discuss three interesting examples of this theory. In Section 7 we show how the objects of interests in this paper appear as examples of a more general notion of $W$-graphs, namely ``$D$-graphs over non-commutative algebras".
\section{Preliminaries}\label{sec:prelim}
Throughout this paper a ring is not required to have an identity and all modules are left modules unless otherwise noted. Let $A$ be a commutative unital ring. An $A$-algebra $R$ is a ring together with a unitary $A$-module structure such that the multiplication map $R\times R \rightarrow R$ is $A$-bilinear. An $A$-algebra homomorphism is defined as an $A$-linear ring homomorphism. A module $M$ over the $A$-algebra $R$ is a module over the ring $R$ together with an $A$-module structure such that for $r\in R, a\in A, m\in M$ we have $a(rm)=(ar)m=r(am)$. A module homomorphism over the $A$-algebra $R$ is defined as a module homomorphism over the ring $R$ which is also $A$-linear (i.e., for an $R$-module homomorphism $\theta:N\rightarrow M$ we have $\theta(an+n')=a\theta(n)+\theta(n')$ for all $a \in A, n,n'\in N$). Let $End_R(R)$ be the unital $A$-algebra consisting of $R$-module endomorphisms of $R$ under addition and composition with $A$-module structure given by $(a \theta) (r) := a\theta(r) \in R$ for all $a \in A, r \in R$. By the definitions above we have $End_RR\subseteq End_A R$.  We now introduce some notation and present three known results (without proofs) to be used in subsequent sections.
	\subsection{Rings with enough orthogonal idempotents}
		A ring $R$ has enough orthogonal idempotents if there exists a set of orthogonal idempotent elements $\{e_x\}_{x\in I}$ (i.e., $e_x e_x=e_x$ and $e_xe_y=0$ for $x\neq y$) such that $R=\oplus_{x \in I} e_x R = \oplus_{x\in I} R e_x$, where $I$ is some index set. Given a commutative unital ring $A$, we say an $A$-algebra $R$ is an $A$-algebra with enough orthogonal idempotents if it has enough orthogonal idempotents as a ring.
	\begin{proposition}\label{prop:quotientidemp}
		Let $R$ be an $A$-algebra with enough orthogonal idempotents given by $\{ e_x\}_{x\in I}$, then the following statements hold.
			\begin{enumerate}[(I)]
				\item Let $J$ be an ideal of $R$ then $R/J$ is an $A$-algebra with enough orthogonal idempotents given by $\{[e_x]\}_{x\in I}$, where $[e_x]$ is the image of $e_x$ in $R/J$.\label{property:quotientidemp}
				\item $End_R(R)\cong \prod_{x\in I} e_x R$ as $A$-algebras, where the addition on $\prod_{x\in I} e_x R$ is component-wise, the $A$-action is defined as $a\cdot (c_x)_{x\in I}:=(ac_x)_{x\in I}$ and multiplication is given by $(c_x)_{x\in I}\cdot (d_y)_{y\in I}:= (\sum_{x \in I} d_yc_x)_{y\in I}$. An isomorphism is given by $\theta \mapsto (\theta(e_x))_{x\in I}$. \label{property:isoend}
			\end{enumerate}
	\end{proposition}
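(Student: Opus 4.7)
\textbf{Proof proposal for Proposition \ref{prop:quotientidemp}.}

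For part (I), I will verify the three defining properties of ``enough orthogonal idempotents'' for the family $\{[e_x]\}_{x \in I}$ in $R/J$. Orthogonality $[e_x][e_y]=[e_x e_y]=0$ for $x\neq y$ and idempotency $[e_x]^2=[e_x]$ pass to the quotient at once. The spanning equality $R/J=\sum_x[e_x](R/J)$ is inherited from $R=\oplus_x e_x R$ by passing to cosets, since any $[r]$ equals $\sum_x [e_x][r]$ as a finite sum. The directness is the only substantive point: if $\sum_i [e_{x_i}][s_i]=0$ in $R/J$ with the $x_i$ distinct, then $\sum_i e_{x_i} s_i \in J$, and left-multiplying by a fixed $e_{x_j}$ (permissible since $J$ is an ideal) collapses this to $e_{x_j} s_j \in J$, so $[e_{x_j}][s_j]=0$. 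The symmetric argument using right multiplication gives $R/J=\oplus_x (R/J)[e_x]$.

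For part (II), I will check that $\Phi\colon \theta \mapsto (\theta(e_x))_{x\in I}$ is well-defined, bijective, and $A$-algebra preserving. First, for any $\theta \in End_R(R)$, left $R$-linearity yields $\theta(e_x)=\theta(e_x e_x)=e_x\theta(e_x) \in e_x R$, so $\Phi$ indeed lands in $\prod_x e_x R$. The driving computation is the following: using $R=\oplus_y Re_y$, any $r\in R$ writes uniquely as a finite sum $r=\sum_y r e_y$, and left $R$-linearity produces
\begin{equation*}
\theta(r)=\sum_y \theta(re_y)=\sum_y r\,\theta(e_y)=\sum_y r\,c_y,
\end{equation*}
where $c_y:=\theta(e_y)$. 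This formula furnishes both injectivity (since the values $(c_y)$ determine $\theta$) and a candidate inverse: given $(c_y)\in\prod_y e_y R$, define $\theta(r):=\sum_y rc_y$. Because $rc_y=re_y c_y$ vanishes for $y$ outside the finite support of $r$ in $\oplus_y Re_y$, the sum is finite; left $R$-linearity and $\theta(e_x)=c_x$ are then routine.

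For the $A$-algebra structure, $A$-linearity of $\Phi$ follows from $(a\theta)(e_x)=a\theta(e_x)$. For multiplicativity, setting $c_x=\theta(e_x)$ and $d_y=\psi(e_y)$ and applying the key formula to $\theta$ at the point $d_y$ gives
\begin{equation*}
(\theta\circ\psi)(e_y)=\theta(d_y)=\sum_x d_y c_x,
\end{equation*}
which is exactly the $y$-th coordinate of $(c_x)\cdot(d_y)$ for the stated product. Hence $\Phi(\theta\circ\psi)=\Phi(\theta)\cdot\Phi(\psi)$.

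The only spot meriting any care is the surjectivity in (II), where the inverse formula $\theta(r)=\sum_y rc_y$ might appear to involve an infinite sum, since a general element of $\prod_y e_y R$ need not have finite support. This worry dissolves the moment one writes $rc_y=re_y c_y$ and invokes the finite support of $r$ in $\oplus_y Re_y$; no deeper obstruction is anticipated in either part.
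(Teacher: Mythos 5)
The paper states this proposition as one of three ``known results (without proofs),'' so there is no argument in the paper to compare your proposal against. Your proof is correct and complete: in part (I), the directness in $R/J=\oplus_x[e_x](R/J)$ via left-multiplication by a fixed $e_{x_j}$, and the mirror argument with right multiplication, are exactly what is needed; in part (II), the key identity $\theta(r)=\sum_y r\,\theta(e_y)$ drives injectivity, surjectivity (with the correct remark that $rc_y=re_yc_y$ has finite support in $\oplus_y Re_y$), and the multiplicativity computation $(\theta\circ\psi)(e_y)=\sum_x d_y c_x$, which matches the order-reversed product the paper places on $\prod_x e_x R$. The only cosmetic omission is that part (I) also asserts $R/J$ is an $A$-algebra, which tacitly requires the ring ideal $J$ to be an $A$-submodule; this is automatic here, since for $j\in J$ one has $aj=\sum_x a(e_xj)=\sum_x(ae_x)j\in J$ by $A$-bilinearity and the left-ideal property, but it would not hurt to say so.
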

	\subsection{Quivers and path algebras}
	A quiver is a collection of vertices and oriented edges, with loops and parallel arrows allowed. More formally, a quiver is a $4$-tuple $\Gamma=(V^{\Gamma},E^{\Gamma},s,t)$ where $V^{\Gamma}, E^{\Gamma}$ are two sets, and $s,t: E^{\Gamma} \rightarrow V^{\Gamma}$ are two set-theoretic maps. The elements of $V^{\Gamma}$ are called vertices, the elements of $E^{\Gamma}$ are called edges, and for any $e \in E^{\Gamma}$ the elements $s(e), t(e) \in V^{\Gamma}$ are called the source and target of $e$, respectively. We denote an element $e \in E^{\Gamma}$ as $t(e) \xleftarrow{e} s(e)$. Given any quiver $\Gamma$, we define a path $p$ as a (possibly empty) finite sequence of edges $(e_1,\dots, e_n)$ such that $s(e_i)=t(e_{i+1})$ for all appropriate $i$. If the sequence is non-empty, we say the length of $p$ is $n$, the source of $p$ is $s(e_n)$, and the target of $p$ is $t(e_1)$. If the sequence is empty we must choose an element in $V^{\Gamma}$ to be both the source and target of $p$. We call these elements paths of length zero and identify the set of paths of length zero with $V^{\Gamma}$. Let $P^{\Gamma}$ be the set of all paths. When convenient, we denote a path $p=(e_1, \dots, e_n) \in P^{\Gamma}$ as $x_0 \xleftarrow{e_1} x_1 \xleftarrow{e_2}\cdots \xleftarrow{e_n}x_n$, where $e_i$ is the edge $x_{i-1}\xleftarrow{e_i}x_i$ for $i\in \{1,2,\dots,n\}$.
	
	We define the path algebra of $\Gamma$ over a commutative unital ring $A$ as follows. Let $A[\Gamma]$ be the free $A$-module on the set $P^{\Gamma}$. Given $p=(e_1,\dots,e_n),q=(f_1\dots, f_m) \in P^{\Gamma}$, define $pq$ as the concatenation of $p$ and $q$ i.e., $pq:= (e_1,\dots,e_n,f_1\dots, f_m)$ if $s(p)=t(q)$ (or equivalently  $s(e_n)=t(f_1)$) and $0$ otherwise, and extend this product bilinearly to any elements in $A[\Gamma]$. Under this product $A[\Gamma]$ becomes an $A$-algebra.
	\begin{remark}\label{rmk:altPA}
	Let $A^{\Gamma}$ be the free $A$-algebra on the set $V^{\Gamma} \cup E^{\Gamma}$ subject to the following relations
	\[ x^2=x, \ \ \ xy =0,\ \ \ x (x\xleftarrow{e} z)=x\xleftarrow{e} z=(x\xleftarrow{e} z)z,\]
	where $x, y, z \in V^{\Gamma}, x\neq y, e\in E^{\Gamma}$.
	The map $\iota: A^{\Gamma} \to A[\Gamma]$ given by $\iota(x)= x, \iota(e)=e$ is a well defined $A$-algebra isomorphism.
	\end{remark}
	\begin{proposition}\label{prop:AGammaidemp}
	Let $\Gamma = (V^\Gamma,E^\Gamma,s,t)$ be a quiver and $A$ a commutative unital ring, then $A[\Gamma]$ is an $A$-algebra with enough orthogonal idempotents given by $\{x\}_{x \in V^\Gamma}$ (the set of paths of length zero). Moreover if $|V^\Gamma|$ is finite then $A[\Gamma]$ is a unital $A$-algebra with identity $1_{A[\Gamma]}:=\sum_{x\in V^\Gamma} x$.
	\end{proposition}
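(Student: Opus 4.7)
The plan is to unpack the definitions and verify each assertion directly. First I would check that the length-zero paths $\{x\}_{x\in V^\Gamma}$ are orthogonal idempotents. By the definition of the product in $A[\Gamma]$, for $x,y \in V^\Gamma$ viewed as length-zero paths, $xy$ is the concatenation if $s(x)=t(y)$ and zero otherwise; since $s(x)=t(x)=x$, this gives $xx=x$ and $xy=0$ for $x\neq y$.

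Next I would identify the left ideal $xA[\Gamma]$ with the $A$-span of the paths having target $x$. For an arbitrary path $p=(e_1,\ldots,e_n)$ with $t(p)=y$, the rule $s(x)=x=t(x)$ gives $xp=p$ when $y=x$ and $xp=0$ otherwise; extending $A$-linearly, $xA[\Gamma]=\mathrm{span}_A\{p\in P^\Gamma : t(p)=x\}$. Since every path has a unique target, the set $P^\Gamma$ partitions as $\bigsqcup_{x\in V^\Gamma}\{p : t(p)=x\}$, and because $A[\Gamma]$ is free on $P^\Gamma$ this partition induces the direct sum
\[ A[\Gamma] = \bigoplus_{x\in V^\Gamma} xA[\Gamma]. \]
The argument for $A[\Gamma]=\bigoplus_{x\in V^\Gamma} A[\Gamma]x$ is symmetric, using sources in place of targets.

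For the unital statement, I would assume $|V^\Gamma|$ is finite and set $1_{A[\Gamma]}:=\sum_{x\in V^\Gamma} x$. For any path $p$ with $t(p)=y$ and $s(p)=z$, the computations above give $1_{A[\Gamma]}\cdot p = \sum_x xp = yp = p$ and similarly $p\cdot 1_{A[\Gamma]}=pz=p$; $A$-bilinearity of the product extends this to all of $A[\Gamma]$, so $1_{A[\Gamma]}$ is a two-sided identity.

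I do not expect any real obstacle: the argument is essentially bookkeeping, with the only mild subtlety being the consistent treatment of length-zero paths under the concatenation rule (in particular that the product formula forces $xx=x$ rather than a length-zero ambiguity). Alternatively, one could invoke the presentation in Remark \ref{rmk:altPA}, where the idempotence of vertices and their interaction with edges is built into the defining relations, and then deduce the decomposition by writing each generator either as $x(x\xleftarrow{e}z)$ or as $(x\xleftarrow{e}z)z$; I would mention this as a quick alternative but carry out the basis-level computation for clarity.
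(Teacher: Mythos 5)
Your proof is correct and complete. Note that the paper itself presents Proposition \ref{prop:AGammaidemp} as one of three ``known results (without proofs),'' so there is no paper proof to compare against; your argument is exactly the standard bookkeeping one would expect. You correctly reduce each required property to the concatenation rule for paths of length zero: $xx=x$ and $xy=0$ for $x\neq y$ give orthogonal idempotence, the identification $xA[\Gamma]=\mathrm{span}_A\{p:t(p)=x\}$ together with the partition of $P^\Gamma$ by target gives $A[\Gamma]=\bigoplus_x xA[\Gamma]$ (and symmetrically for sources), and $1_{A[\Gamma]}=\sum_x x$ acts as a two-sided identity on each basis path, hence on all of $A[\Gamma]$ by bilinearity. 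Your remark that the alternative presentation in Remark \ref{rmk:altPA} encodes the same information in its defining relations is also apt.
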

		\subsection{Adjointness and bilinear forms}
			Let $A, B$ be rings. Let $M$ be an $(A,B)$-bimodule, $N$ an $B$-module, and $L$ an $A$-module. The following is a well-known result that will be used in Section \ref{sec:EandR}.
		\begin{proposition}\label{prop:adjoint}
			There is an isomorphism of (abelian) groups between $\Hom_A( M \otimes_B N,L)$ and $\Hom_B(N,\Hom_A(M,L))$  where $(\phi:M\otimes_B N\rightarrow L) \mapsto (\psi:N \rightarrow Hom_A(M,L))$ such that $\psi(n): M \rightarrow L$ maps $m$ to $\phi(m\otimes n)$ for all $n\in N, m\in M$. We say $\phi$ is the adjoint map of $\psi$.
		\end{proposition}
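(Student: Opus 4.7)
The plan is to construct the map in both directions, verify that each lands in the stated Hom set, and then check that the two constructions are mutually inverse group homomorphisms. The whole argument is standard tensor-hom adjunction, so the work is essentially bookkeeping with the module actions.

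First I would define the forward map $\Phi: \Hom_A(M\otimes_B N, L)\to \Hom_B(N,\Hom_A(M,L))$ by the formula in the statement: for $\phi: M\otimes_B N \to L$, set $\Phi(\phi)(n)(m):= \phi(m\otimes n)$. For each fixed $n\in N$, I need to check that $m\mapsto \phi(m\otimes n)$ is an $A$-module homomorphism $M\to L$; this is immediate from the fact that $a(m\otimes n) = (am)\otimes n$ in the $(A,B)$-bimodule $M\otimes_B N$, together with $A$-linearity of $\phi$. Then I need to verify that $n\mapsto \Phi(\phi)(n)$ is $B$-linear, where $\Hom_A(M,L)$ carries the $B$-action $(b\cdot f)(m) = f(mb)$; here the $B$-balanced relation $(mb)\otimes n = m\otimes (bn)$ in $M\otimes_B N$ does exactly the required work.

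Next I would construct the inverse $\Psi$ in the opposite direction. Given $\psi: N\to \Hom_A(M,L)$, consider the set-theoretic map $M\times N \to L$ sending $(m,n)\mapsto \psi(n)(m)$. I need to check three things: that this map is additive in each variable, $A$-linear in the first variable, and $B$-balanced, so it descends to an $A$-linear map $\Psi(\psi): M\otimes_B N\to L$. Additivity in each variable is immediate from additivity of $\psi(n)$ and additivity of $\psi$. The $B$-balancing $\psi(n)(mb) = \psi(bn)(m)$ is precisely the $B$-linearity of $\psi$ together with the definition of the $B$-action on $\Hom_A(M,L)$. The $A$-linearity in $m$ then gives the required $A$-linearity of $\Psi(\psi)$ using the universal property of $M\otimes_B N$.

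Finally I would verify that $\Phi$ and $\Psi$ are mutual inverses: both $\Psi\circ\Phi$ and $\Phi\circ\Psi$ act as the identity on simple tensors $m\otimes n$, and since such tensors generate $M\otimes_B N$ as an abelian group (and elements $\psi(n)$ determine an element of $\Hom_B(N, \Hom_A(M,L))$), this suffices. Both maps are manifestly additive in $\phi$ (resp.\ $\psi$), hence are group homomorphisms. The main obstacle, such as it is, is just the careful tracking of the left/right module actions on $M$ when verifying $B$-balancing and $A$-linearity under the hom $B$-action; no deep ideas are required.
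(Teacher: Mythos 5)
Your proof is correct: it is the standard tensor-hom adjunction argument, constructing the map in both directions, checking well-definedness (in particular the $B$-balancing via the $B$-action $(b\cdot f)(m)=f(mb)$ on $\Hom_A(M,L)$), and verifying the two constructions are mutually inverse additive maps. The paper states Proposition \ref{prop:adjoint} as a known result and deliberately omits the proof, so there is no alternative argument in the paper to compare against; your write-up is a faithful rendition of the usual one.
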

			\begin{definition}Given two rings $A,B$, a left $A$-module ${}_AM$, a right $B$-module $M'_B$ and an $(A,B)$-bimodule ${}_AM''_B$, we define an $(A,B)$-bilinear form $<,>$ as an $(A,B)$-bimodule homomorphism from ${}_AM\times M'_B$ to ${}_AM''_B$. \end{definition}
\section{Hecke Datums and pre-$D$-graphs} \label{sec:IHAD}
In this section we extend the notion of $W$-graphs (introduced in \cite{KL1}) to \textit{pre-$D$-graphs} and \textit{$D$-graphs}, which are essentially $W$-graphs for multi-parameter Hecke algebras. We show these objects give rise to representations of Hecke algebras. Given a pre-$D$-graph (assuming mild finiteness conditions), we then construct a dual pre-$D$-graph  and show that their respective representations are contragredient (Proposition \ref{prop:dualrep}).
\subsection{Definition}\label{subsec:IHAD-definition}
Let $(W,S)$ be a Coxeter system, $A$ be a commutative unital ring, $(a_r)_{r\in S}$, $(b_r)_{r \in S}$ be families of elements in $A$ such that $a_r = a_s$, $b_r=b_s$ if $r$ and $s$ are conjugate in $W$. We say that a \textbf{Hecke datum} is a 5-tuple $D=(W,S,A, (a_r)_{r\in S}, (b_r)_{r \in S})$. Associated to each Hecke datum $D$ we have a unital Hecke algebra over the ring $A$, denoted $\Hecke(D)$, defined as
	\[  \left< \T_r \,\big|\, (\T_r\T_s\T_r \dots)_{m_{r,s}} - (\T_s\T_r\T_s\dots)_{m_{r,s}}=0, (\T_r-a_r)(\T_r+b_r)=0 \right>,\]
where $r, s \in S$, $m_{r,s}$ denotes the order of $rs$ in $W$, $(\T_r\T_s\T_r \dots)_{m_{r,s}}$ means that we have $m_{r,s}$ generators in the product, and we only consider such products when $m_{r,s}<\infty$. For any $w \in W$ with reduced expression $w=r_1r_2 \cdots r_n$ we set $\T_w := \T_{r_1}\T_{r_2}\cdots \T_{r_n}$. This expression does not depend on the reduced expression of $w$ since two reduced expressions differ by repeated application of braid relations of the form $(rsr \dots)_{m_{r, s}}=(srs \dots)_{m_{r, s}}$.

 For much of Section $\ref{universalPDG}$ we specialize to the datum $D_{\mathbb{Z}}:=(W,S,\mathbb{Z}[v,v^{-1}],\allowbreak v,v^{-1})$ (here $a_r=v$, $b_r=v^{-1}$ for all $r \in S$). In this case the algebra $\mathcal{H}(D_{\mathbb{Z}})$ is the equal parameter Hecke algebra associated to $(W,S)$ (see \cite{KL1}).
	\begin{remark}\label{rmk:alternateH}
	The Hecke algebra $\mathcal{H}(D)$ has an alternate description as the $A$-module generated by $\{\T_w\}_{w \in W}$ with multiplication defined as $\T_r\T_w = \T_{rw}$ if $l(w)<l(rw)$, and $\T_r\T_w = (a_r-b_r)\T_{w}+a_rb_r\T_{rw}$ if $l(w)>l(rw)$, where $r \in S, w \in W$, and $l$ is the length function of the Coxeter system $(W,S)$.
	\end{remark}
	\begin{definition}\label{def:preDgraph}
		Let $\mathcal{P}(S)$ be the power set of $S$. For any Hecke datum $D$, we define a \textbf{pre-$D$-graph} as a quiver $\Gamma$ with vertex set $V^{\Gamma}$, edge set $E^{\Gamma}$ together with a map $\mathcal{L}^{\Gamma}:V^{\Gamma} \rightarrow \mathcal{P}(S)$ such that 
		\begin{enumerate}
			\item for any $x \in V^{\Gamma}, r \in S$ with $r \not\in \mathcal{L}^{\Gamma}(x)$, there are only finitely many $y \in V^{\Gamma}$, $e \in E^{\Gamma}$ such that $r\in \mathcal{L}^{\Gamma}(y)$ and $x\xleftarrow{\;e\;} y $. \label{WDHecke1}
		\end{enumerate}
	A pre-$D$-graph is called \textbf{dualizable} if
		\begin{enumerate}
		\setcounter{enumi}{1}	
			\item for any $x \in V^{\Gamma}, r \in S$ with $r \in \mathcal{L}^{\Gamma}(x)$, there are only finitely many $y \in V^{\Gamma}$, $e \in E^{\Gamma}$ such that $r \not \in \mathcal{L}^{\Gamma}(y)$ and $y\xleftarrow{\;e\;} x$. \label{WDHecke2} 
		\end{enumerate}
	Conditions \ref{WDHecke1} and \ref{WDHecke2} hold automatically if both $V^{\Gamma}$ and $E^{\Gamma}$ are finite.\\
	A \textbf{$D$-graph} is a pre-$D$-graph $\Gamma$ together with a map $\mu^{\Gamma}:E^{\Gamma} \rightarrow A$ such that
		\begin{enumerate}
			\setcounter{enumi}{2}
			\item\label{WDHecke3}  for $\mathcal{E}^{\Gamma}$, a free unital $A$-module on the set $V^{\Gamma}$, and for all $ r, s \in S$ with $r \neq s, m_{r,s}<\infty$ we have 
			\[ (\tau_r^{\Gamma} \tau_s^{\Gamma} \tau_r^{\Gamma} \dots)_{m_{r,s}} - ( \tau_s^{\Gamma} \tau_r^{\Gamma} \tau_s^{\Gamma} \cdots)_{m_{r,s}} = 0 \in End_A(\mathcal{E}^\Gamma),\] 
		\end{enumerate}
where $\tau_r^{\Gamma}$ is the endomorphism of $\mathcal{E}^{\Gamma}$ given by
		\begin{equation}\label{eq:deftaur} \tau_r^{\Gamma} (x) = \begin{cases}
				(a_r) x + \sum\limits_{\substack{y ,e |\,r \in \mathcal{L}(y)\\ (x \xleftarrow{e} y)\in E^{\Gamma}}} \mu^{\Gamma}(e) y & \text{ if $r \not\in \mathcal{L}^{\Gamma}(x)$}\\
				(-b_r) x & \text{ if $r \in \mathcal{L}^{\Gamma}(x),$}\end{cases}
		\end{equation}
		for all $x \in V^{\Gamma}.$ We say a $D$-graph is dualizable if it is dualizable as a pre-$D$-graph.
	\end{definition}
	\begin{proposition}\label{prop:tauwd}
		Let $\Gamma$ be a $D$-graph then the map $\tau^{\Gamma}: \mathcal{H}(D)  \rightarrow End_A(\mathcal{E}^{\Gamma})$ where $\tau^{\Gamma}(\T_r)=  \tau^{\Gamma}_r$ is a representation of the Hecke algebra $\mathcal{H}(D)$. When no confusion arises, we will drop all superscripts $\Gamma$.
	\end{proposition}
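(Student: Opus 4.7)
The plan is to invoke the universal property of $\mathcal{H}(D)$: since $\mathcal{H}(D)$ is presented by generators $\T_r$ and relations (braid and quadratic), it suffices to check that the endomorphisms $\tau_r^\Gamma$ satisfy those same relations in $\mathrm{End}_A(\mathcal{E}^\Gamma)$. The prerequisite is that each $\tau_r^\Gamma$ is actually a well-defined $A$-module endomorphism; this is where condition~\ref{WDHecke1} in Definition~\ref{def:preDgraph} enters, since it guarantees that for every basis vector $x$ with $r\notin \mathcal{L}^\Gamma(x)$ the sum $\sum \mu^\Gamma(e)\,y$ appearing in \eqref{eq:deftaur} is finite. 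Extending $\tau_r^\Gamma$ $A$-linearly to $\mathcal{E}^\Gamma$ then yields a bona-fide element of $\mathrm{End}_A(\mathcal{E}^\Gamma)$, and it remains to check the two families of relations.

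The braid relations are built into the definition: condition~\ref{WDHecke3} of Definition~\ref{def:preDgraph} is exactly $(\tau_r^\Gamma \tau_s^\Gamma \tau_r^\Gamma\cdots)_{m_{r,s}} = (\tau_s^\Gamma \tau_r^\Gamma \tau_s^\Gamma\cdots)_{m_{r,s}}$ whenever $m_{r,s}<\infty$, so nothing is left to do there.

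For the quadratic relation $(\tau_r^\Gamma - a_r)(\tau_r^\Gamma + b_r) = 0$, I would verify it on an arbitrary basis vector $x\in V^\Gamma$, splitting into two cases according to whether $r\in\mathcal{L}^\Gamma(x)$. If $r\in\mathcal{L}^\Gamma(x)$ then $\tau_r^\Gamma(x) = -b_r x$, so $(\tau_r^\Gamma + b_r)(x) = 0$ and the composite vanishes. If $r\notin\mathcal{L}^\Gamma(x)$, one computes $(\tau_r^\Gamma + b_r)(x) = (a_r+b_r)x + \sum \mu^\Gamma(e)\,y$; applying $\tau_r^\Gamma$ to this, the first summand contributes $(a_r+b_r)(a_r x + \sum \mu^\Gamma(e)\,y)$, while each $y$ satisfies $r\in \mathcal{L}^\Gamma(y)$ (by definition of the index set) and so $\tau_r^\Gamma(y) = -b_r y$, contributing $-b_r \sum \mu^\Gamma(e)\,y$. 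The two contributions collapse to $a_r\bigl((a_r+b_r)x + \sum \mu^\Gamma(e)\,y\bigr) = a_r (\tau_r^\Gamma + b_r)(x)$, giving $(\tau_r^\Gamma - a_r)(\tau_r^\Gamma + b_r)(x) = 0$.

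Since these are the only relations in the presentation of $\mathcal{H}(D)$, the assignment $\T_r\mapsto \tau_r^\Gamma$ extends uniquely to an $A$-algebra homomorphism $\tau^\Gamma:\mathcal{H}(D)\to \mathrm{End}_A(\mathcal{E}^\Gamma)$. There is no real obstacle here: the only mild subtlety is bookkeeping in the quadratic case (keeping track of the subset of $V^\Gamma$ indexing the sum and the fact that every $y$ appearing there automatically has $r\in \mathcal{L}^\Gamma(y)$, so that $\tau_r^\Gamma(y) = -b_r y$).
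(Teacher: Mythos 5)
Your proposal is correct and follows essentially the same route as the paper: invoke the presentation of $\mathcal{H}(D)$, cite condition~\ref{WDHecke1} for well-definedness, condition~\ref{WDHecke3} for the braid relations, and verify the quadratic relation on basis vectors by cases on whether $r\in\mathcal{L}^\Gamma(x)$. The only cosmetic difference is that the paper first observes $(\tau_r - a_r)(\tau_r + b_r) = (\tau_r + b_r)(\tau_r - a_r)$ and then, in the case $r\notin\mathcal{L}^\Gamma(x)$, applies the factors in the commuted order so that $(\tau_r - a_r)(x) = \sum\mu(e)\,y$ is annihilated immediately by $(\tau_r + b_r)$; your direct expansion of $\tau_r\bigl((\tau_r+b_r)(x)\bigr)$ reaches the same conclusion with slightly more bookkeeping but is equally valid.
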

	\begin{proof} 
		Property $\ref{WDHecke1}$ in Definition \ref{def:preDgraph} shows $\tau_r$ is a well defined endomorphism of $\mathcal{E}$ (the sum in $(\ref{eq:deftaur})$ is finite). To show the map $\tau$ is well defined, by property $\ref{WDHecke3}$, it suffices to show $(\tau_r-a_r \id_{\mathcal{E}})(\tau_r+b_r \id_{\mathcal{E}})=0$. First notice
			\[(\tau_r-a_r \id_{\mathcal{E}})(\tau_r+b_r \id_{\mathcal{E}})=\tau_r^2-a_r\tau_r+b_r\tau_r - a_rb_r \id_{\mathcal{E}}=(\tau_r+b_r \id_{\mathcal{E}})(\tau_r-a_r \id_{\mathcal{E}}).\] 
		Now if $x \in V$ such that $r \in \mathcal{L}(x)$, then 
			\[ (\tau_r-a_r \id_{\mathcal{E}})(\tau_r+b_r \id_{\mathcal{E}})(x) = (\tau_r-a_r \id_{\mathcal{E}})(0)=0.\]
If  $r \not\in \mathcal{L}(x)$, then 
		\[ (\tau_r+b_r \id_{\mathcal{E}})(\tau_r-a_r \id_{\mathcal{E}})(x) =(\tau_r+b_r \id_{\mathcal{E}})\bigg( \sum\limits_{\substack{y ,e |\,r \in \mathcal{L}(y)\\ (x \xleftarrow{e} y)\in E}} \mu(e) y\bigg)=0.\]
	\end{proof}
	\begin{remark}\label{rmk:gammared}
		For any $D$-graph $\Gamma$ we consider the $D$-graph $\Gamma^{red}$ with vertex set $V^{\Gamma^{red}}:=V^\Gamma$, edge set $E^{\Gamma^{red}}:= \{e\in E^\Gamma \ |  \ \mu^\Gamma(e)\neq0\}$ together with the restrictions of $\mathcal{L}^\Gamma$ and $\mu^\Gamma$ to $E^{\Gamma^{red}}\subset E^\Gamma$. It is easy to see that the representations $\tau^{\Gamma}$ and $\tau^{\Gamma^{red}}$ of $\mathcal{H}(D)$ are equal as $\tau_r^{\Gamma}=\tau_r^{\Gamma^{red}}$  for all $r\in S$.
	\end{remark}
	The following definition will be used in Proposition \ref{prop:dualrep} and Remark $\ref{rmk:dualiso}$.
	\begin{definition}\label{def:defiso}
		Given two pre-$D$-graphs $\Gamma$ and $\Lambda$, an isomorphism of pre-$D$-graphs is a tuple $(i,j)$, where $i:V^{\Gamma} \rightarrow V^{\Lambda}$ and $j:E^{\Gamma} \rightarrow E^{\Lambda}$ are bijections such that $i(s(e)) = s(j(e))$ and $i(t(e)) = t(j(e))$ for all $e \in E^{\Gamma}$, and $\mathcal{L}^{\Gamma}(x)=\mathcal{L}^{\Lambda}(i(x))$ for all $x \in V^{\Gamma}$. If $\Gamma$ and $\Lambda$ are $D$-graphs, then an isomorphism of $D$-graphs is an isomorphism $(i,j)$ of pre-$D$-graphs such that $\mu^{\Gamma}(e) = \mu^{\Lambda}(j(e))$. 
			\end{definition}
	\begin{remark}\label{rmk:KL}
		The notion of a $D$-graph can be regarded as a generalization of the $W$-graphs introduced by Kazhdan and Lusztig in \cite{KL1}. In what follows, we adopt the notation introduced by Lusztig in \cite{L2}. Let $(W,S)$ be a Coxeter system and $D_{\mathbb{Z}}=(W,S,\mathbb{Z}[v,v^{-1}], v,v^{-1})$ (here $a_r=v$ and $b_r = v^{-1}$ for all $r\in S$). Given a $W$-graph $\Lambda$ with vertex set $V^{\Lambda}$, edge set $E^{\Lambda}$, together with maps $\mu^{\Lambda}$ (with image in $\mathbb{Z}\setminus \{0\})$, and $\mathcal{L}^{\Lambda}$ as in \cite{KL1}, we can construct a $D_{\mathbb{Z}}$-graph $\Gamma$ where $V^{\Gamma} :=V^{\Lambda}, E^{\Gamma} := \{ x\xleftarrow{}y, y \xleftarrow{} x | \{x,y\} \in E^{\Lambda}\}$, and maps $\mu^{\Gamma}(x \xleftarrow{} y) := \mu^{\Lambda}(\{x,y\})$ and $\mathcal{L}^{\Gamma} := \mathcal{L}^{\Lambda}$. In that case, the representation $\tau^{\Gamma}$ defined above is the same as the one defined by Kazhdan and Lusztig, as both $\tau_r^{\Gamma}$ (Definition \ref{def:preDgraph}) and $\tau_r$ (see \cite{KL1}) are defined by the same formula (after a change of basis described by Lusztig in \cite{L2}).
	\end{remark}

\subsection{Duality}\label{subsec:IHAD-duality}

Let $D=(W,S,A, (a_r)_{r\in S}, (b_r)_{r \in S})$ be a Hecke datum and $\Gamma$ a  dualizable $D$-graph. We define $D^d$ to be the Hecke datum $(W,S,A,\allowbreak (-b_r)_{r\in S}, (-a_r)_{r \in S})$. Notice that $\mathcal{H}(D)$ is equal to $\mathcal{H}(D^d)$ since they are defined by the same presentation. Let $\Gamma^d$ be the quiver $(V^{\Gamma^d}, E^{\Gamma^d}, s,t)$ where 
		\[ V^{\Gamma^d}:= \{ x^d\ |\ x \in V^{\Gamma}\} \text{\ \ and \ \ } E^{\Gamma^d}:=\{ y^d \xleftarrow{e^d} x^d\ \big| \ x\xleftarrow {e} y \in E^{\Gamma}\}.\]
For any subset $S_1\subseteq S$, let $S_1^\complement$ be its complement in $S$. We define 
	\[\ldes^{\Gamma^d}(x^d \;) := \ldes^{\Gamma}(x)^{\complement} \subset \mathcal{P}(S) \text{\ and \ }\mu^{\Gamma^d}(y^d\xleftarrow{e^d} x^d) := \mu^{\Gamma}(x \xleftarrow{e}y).\]
Similar to Eq. (\ref{eq:deftaur}), we have the endomorphism $\tau^{\Gamma^d}_r$ of $\mathcal{E}^{\Gamma^d}$ given by
	\begin{equation}\label{eq:deftaudr}
		 \tau^{\Gamma^d}_r (x^d) = \begin{cases}
				(-b_r) x^d + \sum\limits_{\substack{y^d, e^d \,|\,r \in \mathcal{L}^{\Gamma^d}(y^d)\\ (x^d \xleftarrow{e^d} y^d)\in E^{\Gamma^d}}} \mu(e^d) y^d & \text{ if $r \not\in \mathcal{L}^{\Gamma^d}(x^d)$}\\
				(a_r) x^d & \text{ if $r \in \mathcal{L}^{\Gamma^d}(x^d).$}\end{cases}
	\end{equation}
 The fact that $\Gamma$ satisfies property $\ref{WDHecke2}$ in Definition \ref{def:preDgraph} guarantees that $\tau_r^{\Gamma^d}$ is an endomorphism of $\mathcal{E}^{\Gamma^d}$ (the sum in $(\ref{eq:deftaudr})$ is finite). 
 Let $<\cdot,\cdot>: \mathcal{E}^{\Gamma} \times \mathcal{E}^{\Gamma^d} \rightarrow A$ be the $(A,A)$-bilinear form given by $<x,y^d> = \delta_{x,y}$, where $\delta_{x,y}$ is the Kronecker delta function. Let $(\cdot)^\flat: h \mapsto h^\flat$ be unique involutive $A$-algebra anti-automorphism of $\mathcal{H}(D)$ that sends $\T_w$ to $\T_{w^{-1}}$ for any $ w \in W$ (see \cite{L1}, 3.4). The next proposition establishes a duality between $\Gamma$ and $\Gamma^d$.

	\begin{proposition}\label{prop:dualrep}
		Let $\Gamma$ be a dualizable $D$-graph, then 
		\begin{enumerate}[(I)]
			\item The quiver $\Gamma^d$, together with $\mathcal{L}^{\Gamma^d}$ and $\mu^{\Gamma^d}$, is a dualizable $D^d$-graph. We say that $\Gamma^d$ is the dual graph of $\Gamma$.\label{property:dualisDgraph}
			\item The representations of $\mathcal{H}(D) (= \mathcal{H}(D^d))$ given by $\tau^{\Gamma}$ and $\tau^{\Gamma^d}$ are contragredient in the following sense
			\small{\[ <\tau^{\Gamma}(h)(x),y^d> = <x,\tau^{\Gamma^d}(h^\flat)(y^d)> \text{ for all } h\in \mathcal{H}(D), x \in \mathcal{E}^{\Gamma}, y^d \in \mathcal{E}^{\Gamma^d}.\]}\label{property:dualcontra}
			\item $(\Gamma^d)^d$ is isomorphic to $\Gamma$ as $D$-graphs.\label{property:dualdual}
		\end{enumerate}
		It follows that if you replace $D$-graph (resp. $D^d$-graph) with pre-$D$-graph (resp. pre-$D^d$-graph) then the statements also hold.
	\end{proposition}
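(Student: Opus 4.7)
The plan is to prove (I), (II), (III) in turn, reducing all three parts to a single adjoint formula at the level of the simple generators. For (I), conditions \ref{WDHecke1} and \ref{WDHecke2} for $\Gamma^d$ are immediate on unpacking definitions: $r \not\in \mathcal{L}^{\Gamma^d}(x^d)$ is equivalent to $r \in \mathcal{L}^{\Gamma}(x)$, and an edge $x^d \xleftarrow{e^d} y^d$ in $E^{\Gamma^d}$ corresponds to an edge $y \xleftarrow{e} x$ in $E^{\Gamma}$, so condition \ref{WDHecke1} for $\Gamma^d$ is identical to condition \ref{WDHecke2} for $\Gamma$, and vice versa. The substantive part of (I) is then condition \ref{WDHecke3}, i.e., the braid relation for $\tau^{\Gamma^d}_r$.

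The core computation is the adjoint formula
\[ <\tau^\Gamma_r(x), y^d> \; = \; <x, \tau^{\Gamma^d}_r(y^d)> \qquad (r \in S,\ x \in V^\Gamma,\ y^d \in V^{\Gamma^d}). \]
I would verify this by a case analysis according to whether $r \in \mathcal{L}^\Gamma(x)$ and whether $r \in \mathcal{L}^\Gamma(y)$; in each of the four cases, formulas $(\ref{eq:deftaur})$ and $(\ref{eq:deftaudr})$ reduce both sides to the same expression, either a scalar multiple of $\delta_{x,y}$ or a sum over edges that matches via the bijection $(x \xleftarrow{e} y) \leftrightarrow (y^d \xleftarrow{e^d} x^d)$. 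Since $\{x\}_{x \in V^\Gamma}$ and $\{y^d\}_{y \in V^\Gamma}$ are dual bases with respect to $<,>$, the form is non-degenerate; this lets one transport the $\tau^\Gamma$-braid relation to $\tau^{\Gamma^d}$ by reversing the word. The reversal of $(rsr \cdots)_{m}$ equals $(rsr \cdots)_{m}$ when $m$ is odd and $(srs \cdots)_{m}$ when $m$ is even, so in both parities the pair of reversed words produced by iterating the adjoint formula coincides with the two sides of the target braid relation; non-degeneracy then delivers condition \ref{WDHecke3} for $\Gamma^d$, completing (I).

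For (II), I would iterate the adjoint formula: for $w = r_1 \cdots r_n$ a reduced expression,
\[ <\tau^\Gamma(\T_w)(x), y^d> \; = \; <x, \tau^{\Gamma^d}_{r_n} \cdots \tau^{\Gamma^d}_{r_1}(y^d)> \; = \; <x, \tau^{\Gamma^d}(\T_{w^{-1}})(y^d)> \; = \; <x, \tau^{\Gamma^d}(\T_w^\flat)(y^d)>, \]
and $A$-linearity extends the identity from the basis $\{\T_w\}_{w \in W}$ to all of $\mathcal{H}(D)$. For (III), the isomorphism $(i,j)\colon \Gamma \to (\Gamma^d)^d$ defined by $i(x) := (x^d)^d$ and $j(e) := (e^d)^d$ preserves sources and targets by construction, preserves labels because $(\mathcal{L}^\Gamma(x)^\complement)^\complement = \mathcal{L}^\Gamma(x)$, and preserves edge weights since $\mu$ passes through each dualization unchanged. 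The pre-$D$-graph statements follow by the same arguments, ignoring the $\mu$-dependent content. The main obstacle is the four-case verification of the adjoint formula; once that is established, everything else is formal.
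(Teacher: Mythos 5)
Your proposal is correct and follows essentially the same route as the paper: verify the one-generator adjoint identity $\langle\tau^\Gamma_r(x),y^d\rangle=\langle x,\tau^{\Gamma^d}_r(y^d)\rangle$ by cases, iterate it, invoke the triviality of the radicals of $\langle\cdot,\cdot\rangle$ to transfer the braid relation (keeping track of the word reversal and the resulting $(-1)^{m_{r,s}-1}$), and then deduce (II) and (III) formally. The paper merely orders things a bit differently (it states the iterated adjoint identity first and reads off (I), (II), (III) from it), but the substance is identical.
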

	\begin{proof}
		 For simplicity, for any $r\in S$, denote $\tau^{\Gamma}_r$ as $\tau_r$ and $\tau_r^{\Gamma^d}$ as $\tau_r^d$. We first show that for any $r_1, \dots, r_n \in S, x \in \mathcal{E}^{\Gamma}$, $y^d \in \mathcal{E}^{\Gamma^d}$ we have 
			\begin{equation}\label{eq:tauduality}
				 <\tau_{r_1}\cdots \tau_{r_n}(x),y^d> = <x,\tau^d_{r_n}\cdots \tau^d_{r_1}(y^d)>. 
			\end{equation}
		When $n=1$, this follows from equations (\ref{eq:deftaur}) and (\ref{eq:deftaudr}). 
		The general case then follows by induction. We now prove $(\ref{property:dualisDgraph})$ by showing the three properties in Definition \ref{def:preDgraph} hold for $\Gamma^d$. Property $\ref{WDHecke1}$ (resp.  $\ref{WDHecke2}$) for $\Gamma^d$ is equivalent to property $\ref{WDHecke2}$ (resp.  $\ref{WDHecke1}$) for $\Gamma$. To prove property $\ref{WDHecke3}$ notice that the left and right radicals of $<\cdot,\cdot>$ are trivial. Since property $\ref{WDHecke3}$ is satisfied for $\Gamma$, 
		\begin{align*}		
			0&=<(\tau_r \tau_s \tau_r \dots)_{m_{r,s}}(x) - ( \tau_s \tau_r \tau_s \cdots)_{m_{r,s}}(x), y^d>\\
			&=(-1)^{m_{r,s}-1}<x,(\tau^d_r \tau^d_s \tau^d_r \dots)_{m_{r,s}}(y^d) - ( \tau^d_s \tau^d_r \tau^d_s \cdots)_{m_{r,s}}(y^d)>,
		\end{align*}
		which implies property $\ref{WDHecke3}$ is satisfied for $\Gamma^d$. 
To prove $(\ref{property:dualcontra})$, by Proposition \ref{prop:tauwd},  $\tau^{\Gamma^d}$ is a representation of  $\mathcal{H}(D)$. The result now follows by Eq. (\ref{eq:tauduality}) and bilinearity of $<\cdot, \cdot>$. To prove $(\ref{property:dualdual})$ let $i: V^{\Gamma} \rightarrow V^{(\Gamma^d)^d}$ such that $i(x)=(x^d)^d$ for all $x\in V^{\Gamma}$, and $j:E^{\Gamma} \rightarrow E^{(\Gamma^d)^d}$ such that $j(x\xleftarrow{e} y)=(x^d)^d \xleftarrow{(e^d)^d} (y^d)^d$  for all $e\in E^{\Gamma}$. This pair satisfies the conditions in Definition \ref{def:defiso}. 
	\end{proof}

 \section{Quotient Path Algebras}\label{sec:QPA}
 In this section we discuss the main topic of this paper, quotient path algebras. Throughout the section we let $D=(W,S,A, (a_r)_{r\in S}, (b_r)_{r \in S})$ be a Hecke datum (unless otherwise noted). We consider a pre-$D$-graph $\Gamma$ and construct its path algebra $A_0[\Gamma]$ (over a subring $A_0\subset A$ that satisfies certain conditions). We then quotient $A_0[\Gamma]$ by a specific ideal $J_0$ (Definition \ref{def:J0}) that allow us to construct a ``universal" representation of the Hecke algebra $\mathcal{H}(D)$ on $\mathcal{R}:=A\otimes_{A_0}A_0[\Gamma]/J_0$ (Proposition \ref{prop:rhodef}). We show $\mathcal{R}^{op}$ appears applying the same construction to the dual graph of $\Gamma$, and the representations of $\mathcal{H}(D)$ on $\mathcal{R}$ and $\mathcal{R}^{op}$ are contragredient (Theorem \ref{thm:Rdual}). We also show $\mathcal{R}$ admits several bimodule structures (Theorem \ref{thm:bimod}).
\subsection{Introduction}\label{sec:QPA:subsec:Intro}
In what follows, let $(D,A_0)$ be a tuple where $D=(W,S,A,\allowbreak (a_r)_{r\in S}, (b_r)_{r \in S})$ is a Hecke datum and $A_0$ is a unital subring of $A$ (with $1_{A_0}=1_A$) such that $A$ is a free $A_0$-module with some basis $\{c_i\}_{i\in I}$.  We say $(D,A_0)$ is an \textbf{extended Hecke datum}. The main example discussed in this paper (Section \ref{universalPDG}) is the extended Hecke datum $(D_\mathbb{Z},\mathbb{Z})$ introduced in Remark \ref{rmk:KL}, where we consider $\mathbb{Z}[v,v^{-1}]$ as a free $\mathbb{Z}$-module with basis $\{v^i\}_{i \in \mathbb{Z}}$.

 Let $\Gamma$ be a pre-$D$-graph with vertex set $V^{\Gamma}$, edge set $E^{\Gamma}$, and $\mathcal{L}^{\Gamma}: V^{\Gamma} \rightarrow A$. Let $P^{\Gamma}$ be the set of paths of $\Gamma$ and $A_0[\Gamma]$ be the path algebra of $\Gamma$ over $A_0$. Extending scalars we can identify $ A\otimes_{A_0}A_0[\Gamma]$ with $A[\Gamma]$, the path algebra of $\Gamma$ over $A$. For simplicity we denote the element $a\otimes p$ as $ap$. We would like to define a representation of the Hecke algebra $\mathcal{H}(D)$ given by 
			$\rho^{\Gamma}: \mathcal{H}(D)  \rightarrow End_{A[\Gamma]}(A[\Gamma])$ such that $\rho^{\Gamma}(\T_r)=  \rho_r^{\Gamma}$
where
\begin{equation}\label{eq:rhodeforig} \rho_r^{\Gamma} (x) = \begin{cases}
				(a_r) x + \sum\limits_{\substack{y,e\,|\,x \xleftarrow{e} y \\r \in \mathcal{L}^{\Gamma}(y)}} (x \xleftarrow{e}y) & \text{ if $r \not\in \mathcal{L}^{\Gamma}(x)$}\\
				(-b_r) x & \text{ if $r \in \mathcal{L}(x),$}\end{cases}\end{equation}
for $x \in V^\Gamma$. Again, when no confusion arises, we will drop the superscripts $\Gamma$.

By Proposition \ref{prop:quotientidemp}$(\ref{property:isoend})$ we have $\rho_r \in End_{A[\Gamma]} A[\Gamma]\subseteq End_{A} A[\Gamma]$.  In general, the map $\rho$ is not well defined since the braid relations are not satisfied. Below, the general idea is to quotient $A_0[\Gamma]$ by the smallest (two-sided) ideal $J_0^{\Gamma}$ such that $\rho$ induces a representation of $\mathcal{H}(D)$ on the $A$-algebra $A\otimes_{A_0}(A_0[\Gamma]/J_0^{\Gamma})$. For $x\in V$ and $r,s \in S$ with $r\neq s$, $m_{r,s}<\infty$, let $ X_{p,i}^{r,s,x} \in A_0$ be defined such that
	\begin{equation}\label{eq:relations}
		\left((\rho_r \rho_s \rho_r \dots)_{m_{r,s}}-(\rho_s \rho_r \rho_s \dots)_{m_{r,s}}\right) (x) = \sum\limits_{i\in I}\sum\limits_{p\in P}c_i X_{p,i}^{r,s,x}  p.
	\end{equation}
	To simplify notation let $S^2_{fin} := \{ (r,s) \in S\times S \,|\, m_{r,s}<\infty\}$ (introduced in \cite{BB}).
	\begin{definition}\label{def:J0}
		Let $J_0^\Gamma$ be the two-sided ideal of $A_0[\Gamma]$ generated by the set $\{\sum_{p} X_{p,i}^{r,s,x}p \,|\, x \in V^{\Gamma}, i \in I, (r,s)\in S^2_{fin}\}$.
	\end{definition}
A priori the ideal $J_0^\Gamma$ depends on the basis $\{c_i\}_{i \in I}$, however if $\{c_i\}_{i \in I}$ and $\{d_j\}_{j \in J}$ are bases of $A$ as a free $A_0$-module, then there exist $k_{i_j} \in A_0$ such that $d_j=\sum_i k_{i_j}c_i$ for all $j$. Write
		\[\left((\rho_r \rho_s \rho_r \dots)_{m_{r,s}}-(\rho_s \rho_r \rho_s \dots)_{m_{r,s}}\right) (x) = \sum\limits_{j\in J}\sum\limits_{p\in P}d_j Y_{p,j}^{r,s,x}  p,\]
 where $Y_{p,j}^{r,s,x} \in A_0$, then
			\[ \sum\limits_{i,p}c_i X_{p,i}^{r,s,x} p= \sum\limits_{j,p} d_j Y_{p,j}^{r,s,x}  p=\sum_i \bigg(c_i \sum_jk_{i_j}\bigg(\sum_p Y_{p,j}^{r,s,x}p\bigg) \bigg).\]
		Since $\{c_i\}_{i \in I}$ is a basis, we obtain $\sum_{p}X_{p,i}^{r,s,x}p =  \sum_j k_{i_j}\left(\sum_p Y_{p,j}^{r,s,x}p\right)$ for any (fixed) $i \in I$, hence the ideal generated using the basis $\{c_i\}_{i\in I}$ is contained in the ideal generated using the basis $\{d_j\}_{j\in J}$. The reverse inclusion follows by the same argument interchanging the bases. Thus $J_0^\Gamma$ is independent of the basis of $A$ as a free $A_0$-module.

	\begin{definition}\label{def:qpa}
		Let  $J^{\Gamma}:= A \otimes_{A_0} J_0^{\Gamma}, \mathcal{R}_0^{\Gamma}:=A_0[\Gamma]/J_0^{\Gamma}, \mathcal{R}^{\Gamma}:=A\otimes_{A_0} \mathcal{R}_0^{\Gamma}$. We say $\mathcal{R}^{\Gamma}$ is the \textbf{quotient path algebra} associated to $\Gamma$.
	\end{definition}
	Again, when no confusion arises, we will drop the superscripts $\Gamma$. Since $A$ is free over $A_0$ (hence flat over $A_0$), then $\mathcal{R}$ can be identified with $(A\otimes_{A_0} A_0[\Gamma])/(A\otimes_{A_0}J_0)$ and furthermore with $A[\Gamma]/J$. To distinguish a class $p + J \in \mathcal{R}$ from the element $p \in A[\Gamma]$, we will use the notation $[p] \in \mathcal{R}$. In general $\mathcal{R}$ may be trivial, however in Sections \ref{sec:QPA:subsec:Equivariantmap} and \ref{subsec:examples} we discuss pre-$D$-graphs which give rise to non-trivial quotient path algebras.  
		
	By Proposition \ref{prop:quotientidemp}$(\ref{property:quotientidemp})$ the $A$-algebra $\mathcal{R}$ has enough orthogonal idempotents given by $\{ [x]\}_{x \in V}$, and by Proposition \ref{prop:quotientidemp}$(\ref{property:isoend})$, in order to define a left $\mathcal{R}$-module homomorphism of $\mathcal{R}$, we only need to specify the value of the map on the elements $[x]$ for all $x \in V$. For any $x\xleftarrow{e}y \in E$ define $\eta(e) := [x \xleftarrow{e} y] \in \mathcal{R}$, and let $\trho_r^{\Gamma}$ be the left $\mathcal{R}$-module endomorphism of $\mathcal{R}$ such that
\[ \trho_r^{\Gamma} ([x]) = \begin{cases}
				(a_r) [x] + \sum\limits_{\substack{y,e|x \xleftarrow{e} y \\r \in \mathcal{L}(y)}}\eta(e)[y] & \text{ if $r \not\in \mathcal{L}(x)$}\\
				(-b_r) [x] & \text{ if $r \in \mathcal{L}(x),$}\end{cases}\]
for all $x \in V$. Even though $\eta(e)[y]=[x \xleftarrow{e} y]$, we write the definition of $\trho_r$ in that form to resemble the definition of $\tau_r$ in $(\ref{eq:deftaur})$. Let $\pi:A[\Gamma]\rightarrow \mathcal{R}$ be the natural quotient map i.e., $\pi(p) = [p] \in \mathcal{R}$. By definition of $\rho_r$ and $\trho_r$, we get $\pi(\rho_r(x)) = \trho_r(\pi(x))$ for any $x \in V$.
	\begin{proposition}\label{prop:rhodef}
There is a representation $\trho: \mathcal{H}(D) \rightarrow End_A(\mathcal{R})$ of $\mathcal{H}(D)$ such that $\trho(\T_r)=  \trho_r^{\Gamma}$. When no confusion arises, we will write $\trho_r$ for $\trho_r^{\Gamma}$.
	\end{proposition}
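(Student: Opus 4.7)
The plan is to verify the two defining families of relations of $\mathcal{H}(D)$ --- the quadratic relations $(\trho_r - a_r\id)(\trho_r + b_r\id) = 0$ and the braid relations $(\trho_r\trho_s\cdots)_{m_{r,s}} = (\trho_s\trho_r\cdots)_{m_{r,s}}$ --- for the prescription $\T_r \mapsto \trho_r$. Well-definedness of each $\trho_r$ as an element of $End_A(\mathcal{R})$ is immediate: the sum in its defining formula is finite by the finiteness condition in Definition \ref{def:preDgraph}, and by Proposition \ref{prop:quotientidemp}(\ref{property:isoend}) the prescription on the idempotents $[x]$ extends uniquely to a left $\mathcal{R}$-module endomorphism of $\mathcal{R}$, which is in particular $A$-linear.

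For the quadratic relation I would simply replay the case analysis used in the proof of Proposition \ref{prop:tauwd}. Since $(\trho_r - a_r\id)(\trho_r + b_r\id)$ and $(\trho_r + b_r\id)(\trho_r - a_r\id)$ are both left $\mathcal{R}$-linear, Proposition \ref{prop:quotientidemp}(\ref{property:isoend}) again reduces the check to the idempotents $[x]$, where the cases $r \in \mathcal{L}(x)$ and $r \notin \mathcal{L}(x)$ work verbatim; one only needs the computation $\trho_r(\eta(e)[y]) = \eta(e)\,\trho_r([y]) = -b_r\,\eta(e)[y]$ when $r \in \mathcal{L}(y)$, which uses left $\mathcal{R}$-linearity of $\trho_r$.

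For the braid relations the construction of $J_0$ does all the work. The identity $\pi\circ\rho_r = \trho_r\circ\pi$ already recorded on vertices extends to an identity of $A$-linear maps $A[\Gamma] \to \mathcal{R}$: using the decomposition $A[\Gamma] = \bigoplus_{x\in V^\Gamma} A[\Gamma]\,x$, any $\lambda x$ satisfies
\[\pi(\rho_r(\lambda x)) = \pi(\lambda\,\rho_r(x)) = \pi(\lambda)\,\pi(\rho_r(x)) = \pi(\lambda)\,\trho_r([x]) = \trho_r(\pi(\lambda)[x]) = \trho_r(\pi(\lambda x)),\]
because $\rho_r$ is left $A[\Gamma]$-linear, $\trho_r$ is left $\mathcal{R}$-linear, and $\pi$ is an $A$-algebra homomorphism. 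Iterating yields $\pi\circ(\rho_{r_1}\cdots\rho_{r_n}) = (\trho_{r_1}\cdots\trho_{r_n})\circ\pi$ for every word, so applying the braid obstruction to a vertex $x$ and invoking (\ref{eq:relations}) gives
\[\bigl((\trho_r\trho_s\cdots)_{m_{r,s}} - (\trho_s\trho_r\cdots)_{m_{r,s}}\bigr)([x]) = \sum_{i\in I} c_i\,\pi\!\Bigl(\sum_{p\in P^\Gamma} X_{p,i}^{r,s,x}\,p\Bigr) = 0,\]
since each $\sum_p X_{p,i}^{r,s,x}\,p$ lies in $J_0^\Gamma$ by the very definition of $J_0^\Gamma$ and therefore maps to $0$ in $\mathcal{R}$. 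A final invocation of Proposition \ref{prop:quotientidemp}(\ref{property:isoend}) promotes this equality on every idempotent to equality of left $\mathcal{R}$-module endomorphisms, and the universal property of the presentation of $\mathcal{H}(D)$ then produces the sought representation $\trho$.

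The one point that requires care is the mismatch of module structures in the intertwining identity --- $\rho_r$ is left $A[\Gamma]$-linear while $\trho_r$ is only left $\mathcal{R}$-linear --- but this is reconciled exactly by $\pi$ being an $A$-algebra homomorphism, giving $\pi(\lambda)\,\trho_r([x]) = \trho_r(\pi(\lambda)[x])$. Everything else is the bookkeeping that $J_0^\Gamma$ was engineered to absorb.
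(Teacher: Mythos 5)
Your proof is correct and follows the same route as the paper's: intertwine $\trho$ with $\rho$ via the quotient map $\pi$, observe that the braid obstruction at a vertex lands in $J$ by the very construction of $J_0^\Gamma$, and defer the quadratic relation to the case analysis of Proposition \ref{prop:tauwd}. You are somewhat more careful than the paper, explicitly justifying the extension of $\pi\circ\rho_r = \trho_r\circ\pi$ from vertices to all of $A[\Gamma]$ (reconciling the $A[\Gamma]$-linearity of $\rho_r$ with the $\mathcal{R}$-linearity of $\trho_r$ through $\pi$ being an algebra homomorphism) and the final upgrade from equality on the idempotents $[x]$ to equality of endomorphisms via Proposition \ref{prop:quotientidemp}(\ref{property:isoend}).
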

	\begin{proof}
Since $End_{\mathcal{R}}\mathcal{R}\subseteq End_A\mathcal{R}$ we have $\trho_r \in End_A\mathcal{R}$. It now suffices to show that the relations defining $\mathcal{H}(D)$ are preserved. Let $(r,s) \in S^2_{fin}$ and $m:=m_{r,s}$. Then for $[p]\in \mathcal{R}$ we have 
	\small{\[\left((\trho_r \trho_s \trho_r \dots)_m-(\trho_s \trho_r \trho_s \dots)_m\right) ([p]) = \pi((\rho_r \rho_s \rho_r \dots)_m-(\rho_s \rho_r \rho_s \dots)_m (p)) \in J, \]}
thus $(\trho_r \trho_s \trho_r \dots)_m-(\trho_s \trho_r \trho_s \dots)_m =0$. The relation $(\trho_r- a_r \id_{\mathcal{R}})(\trho_r + b_r \id_{\mathcal{R}})=0$ follows from an entirely similar computation to the one in Proposition \ref{prop:tauwd}. 
	\end{proof}
	 In Secion \ref{sec:QPA:subsec:Equivariantmap} we assert the connection between the representation of $\mathcal{H}(D)$ given by $\tau$ (Proposition  \ref{prop:tauwd}) and the one given by $\trho$. However, this connection comes a special case of a more general theory described in Section \ref{sec:WGONCA}. Theorem \ref{thm:universality} supports the claim that $\trho$ is a universal representation, as any other representation of $\mathcal{H}(D)$ coming from a $D$-graph (or more generally a ``$D$-graphs over a non-commutative algebra", see Section \ref{sec:WGONCA}) can be realized as a ``specialization" of $\trho$. 
	\begin{remark}\label{rmk:dualiso}
		Let $(D,A_0)$ be an extended Hecke datum. If there is a pre-$D$-graph isomorphism $(i,j)$ from $\Gamma$ to $\Lambda$ then $\mathcal{R}^{\Gamma}_0 \cong \mathcal{R}^{\Lambda}_0$ with the isomorphism given by $\iota:[(x_0 \xleftarrow{e_1} x_1 \cdots \xleftarrow{e_n}x_n)]\mapsto [i(x_0) \xleftarrow{j(e_1)} i(x_2) \cdots \xleftarrow{j(e_n)}i(x_n)],$ and similarly $\mathcal{R}^{\Gamma} \cong \mathcal{R}^{\Lambda}$. This will be used in Theorem \ref{thm:Rdual}.
	\end{remark}

\subsection{Explicit computation of $\trho_{r_n} \cdots \trho_{r_1}$}\label{subsec:comput}
Let $\Gamma$ be any pre-$D$-graph where $(D,\allowbreak A_0)$ is an extended Hecke datum. In some circumstances it is helpful to have a concrete description of the following composition, $\trho_{r_n} \cdots \trho_{r_1}([p])$ for all $p \in P^{\Gamma}.$ Since $\trho_r$ is a left $\mathcal{R}$-module homomorphism, it is enough to consider $\trho_r([x_0])$, for $x_0 \in V^{\Gamma}$. In order to give a more precise and elegant description, we first extend the quiver $\Gamma$ by adding distinguished loops in every vertex, that is, let $\Gamma^{ext}$ be the quiver with vertex set $V^{\Gamma^{ext}}:=V^{\Gamma}$ and edge set $E^{\Gamma^{ext}}:=E \dot\cup \{ x \xleftarrow{e_x} x | x \in V\}$. Throughout this section we assume $q \in P^{\Gamma^{ext}}$. Define $\nu (q) \in P^{\Gamma}$ as the path that you get by removing the distinguished loops in $q$. For example, if $x,y \in V^{\Gamma}$, $x \xleftarrow{e}y, x \xleftarrow{f}x \in E^{\Gamma}$ then $\nu(x\xleftarrow{f} x \xleftarrow{e} y \xleftarrow{e_y} y) = x\xleftarrow{f} x \xleftarrow{e} y \in P^{\Gamma}$. 
	\begin{proposition}\label{prop:explicit}
		Fix $x_0$ in $V^{\Gamma}$, then for each $r_1, \dots, r_n \in S$ we have
		\begin{equation}\label{eq:explicitcompu}
			 \rho_{r_n} \cdots \rho_{r_1} (x_0) = \sum_{q \in \mathfrak{P}_n} \Xi(q) \nu(q) \in A[\Gamma],
		\end{equation}
where 
		\begin{enumerate}
			\item $\mathfrak{P}_n$ is the set of all paths $q =(x_0 \xleftarrow{e_1} x_1 \xleftarrow{e_2} \cdots  \xleftarrow{e_{n-1}}x_{n-1}\xleftarrow{e_{n}} x_n) \in P^{\Gamma^{ext}}$ such that for all $i\in \{0, \dots, n-1\}$ if $x_i\neq x_{i+1}$ then $r_{i+1} \not \in \mathcal{L}(x_i)$ and $r_{i+1} \in \mathcal{L}(x_{i+1})$, while if $x_i = x_{i+1}$ then $e_{i+1}=e_{x_i}.$\label{expcompcondition}
			\item $\Xi(q)$ is the product of the elements in the sequence $\{\xi_i(q)\}_{i=1}^{n}$ where 
		\[ \xi_i(q)=\begin{cases} a_{r_i}&\text{ if } x_{i-1}=x_i \text{ and } r_i \not \in \mathcal{L}(x_{i-1})\\-b_{r_i}&\text{ if } x_{i-1}=x_i \text{ and } r_{i} \in \mathcal{L}(x_{i-1})\\1_A&\text{ if } x_{i-1}\neq x_i. \end{cases}\]
\end{enumerate}
Moreover, 
\[\trho_{r_n} \cdots \trho_{r_1} ([x_0]) = \sum_{q \in \mathfrak{P}_n} \Xi(q) [\nu(q)] \in \mathcal{R}.\]
	\end{proposition}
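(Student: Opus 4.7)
The plan is to prove the first identity (\ref{eq:explicitcompu}) by induction on $n$, then deduce the ``moreover'' identity by applying the quotient map $\pi: A[\Gamma] \to \mathcal{R}$ to both sides. For the base case $n = 1$, the paths in $\mathfrak{P}_1$ starting at $x_0$ are of two kinds: the distinguished loop $x_0 \xleftarrow{e_{x_0}} x_0$, contributing $a_{r_1} x_0$ when $r_1 \notin \mathcal{L}(x_0)$ and $-b_{r_1} x_0$ when $r_1 \in \mathcal{L}(x_0)$; and, when $r_1 \notin \mathcal{L}(x_0)$, genuine edges $x_0 \xleftarrow{e} x_1$ with $r_1 \in \mathcal{L}(x_1)$, each contributing $1_A \cdot (x_0 \xleftarrow{e} x_1)$. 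Comparing these contributions with the defining equation (\ref{eq:rhodeforig}) settles $n = 1$.

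For the inductive step, the key observation is that $\rho_{r_{n+1}}$ lies in $End_{A[\Gamma]}(A[\Gamma])$ by Proposition \ref{prop:quotientidemp}(\ref{property:isoend}), so it is left $A[\Gamma]$-linear. For any path $p$ with source $z = s(p)$ we have $p = p \cdot z$ in $A[\Gamma]$, hence $\rho_{r_{n+1}}(p) = p \cdot \rho_{r_{n+1}}(z)$. Applying this to each $\nu(q)$ in the inductive hypothesis, with $x_n := s(q) = s(\nu(q))$, yields
\begin{equation*}
\rho_{r_{n+1}} \rho_{r_n} \cdots \rho_{r_1}(x_0) \;=\; \sum_{q \in \mathfrak{P}_n} \Xi(q)\, \nu(q) \cdot \rho_{r_{n+1}}(x_n).
\end{equation*}
Expanding $\rho_{r_{n+1}}(x_n)$ via (\ref{eq:rhodeforig}) produces for each $q$ either a single ``stay'' term $-b_{r_{n+1}} \nu(q)$ (if $r_{n+1} \in \mathcal{L}(x_n)$), or a ``stay'' term $a_{r_{n+1}} \nu(q)$ together with ``move'' terms $\nu(q) \cdot (x_n \xleftarrow{e_{n+1}} x_{n+1})$ ranging over edges with $r_{n+1} \in \mathcal{L}(x_{n+1})$ (if $r_{n+1} \notin \mathcal{L}(x_n)$).

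I would then exhibit a bijection between these new contributions and the extensions of $q$ to an element $q' \in \mathfrak{P}_{n+1}$: a ``stay'' contribution corresponds to appending the distinguished loop $e_{x_n}$, with multiplier $\xi_{n+1}(q') \in \{a_{r_{n+1}}, -b_{r_{n+1}}\}$ determined by the descent condition on $x_n$; a ``move'' contribution corresponds to appending a genuine edge, in which case the descent conditions in (\ref{expcompcondition}) are precisely what restrict the summation and $\xi_{n+1}(q') = 1_A$. Since $\Xi(q') = \Xi(q)\,\xi_{n+1}(q')$ by definition and every $q' \in \mathfrak{P}_{n+1}$ is uniquely the extension of its length-$n$ truncation, summing over $q$ recovers $\sum_{q' \in \mathfrak{P}_{n+1}} \Xi(q')\nu(q')$, closing the induction.

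For the moreover assertion, I would apply $\pi$ to both sides of (\ref{eq:explicitcompu}). The intertwining $\pi \circ \rho_r = \trho_r \circ \pi$ is recorded on vertices immediately before Proposition \ref{prop:rhodef}, and extends to all of $A[\Gamma]$ because $\pi$ is a ring homomorphism, $\rho_r$ is left $A[\Gamma]$-linear, and $\trho_r$ is left $\mathcal{R}$-linear, so both sides agree on any product $a \cdot x$ with $a \in A[\Gamma]$ and $x \in V^\Gamma$. The principal obstacle is entirely bookkeeping: aligning the three cases of $\xi_{n+1}$ with the three branches of (\ref{eq:rhodeforig}), and verifying that passing to the extended quiver $\Gamma^{ext}$ is exactly the device needed to package the diagonal coefficients $a_{r_i}$ and $-b_{r_i}$ as ordinary summands indexed by paths rather than as separate scalar terms.
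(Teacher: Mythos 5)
Your proposal is correct and follows the same route as the paper, which simply asserts that \eqref{eq:explicitcompu} follows by induction on $n$ and that the final identity follows by applying the quotient map $\pi$. Your write-up fleshes out exactly the induction (base case matching \eqref{eq:rhodeforig}, inductive step via left $A[\Gamma]$-linearity of $\rho_{r_{n+1}}$, and the bijection between new contributions and length-$(n+1)$ extensions in $\mathfrak{P}_{n+1}$) that the paper leaves implicit, and the extension of $\pi\circ\rho_r=\trho_r\circ\pi$ from vertices to all of $A[\Gamma]$ is the routine step the paper's one-line remark relies on.
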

	\begin{proof}
	Equation $(\ref{eq:explicitcompu})$ follows by induction. The last statement follows by applying the natural quotient map $\pi$.
\end{proof}

\subsection{Bimodule structures on $\mathcal{R}$}\label{seq:QPA:subsec:bimod}
In this section, we describe several bimodule structures on certain specializations of the Hecke algebra. We then discuss some of the most interesting examples. To be specific, 
let $(D,A_0)$ be an extended Hecke datum, $\Gamma$ a pre-$D$-graph with finitely many vertices and edges, and $\mathcal{R}$ the quotient path algebra associated to $\Gamma$. Let $1_{\mathcal{R}_0} := \sum_{x \in V} [x]$ be the identity in $\mathcal{R}_0$, thus $1_{\mathcal{R}}:=1_A1_{\mathcal{R}_0}$ is the identity in $\mathcal{R}$. As defined in Proposition \ref{prop:rhodef}, the map $\trho: \mathcal{H}(D) \rightarrow End_{\mathcal{R}}(\mathcal{R})$ defines a representation of the $A$-algebra $\mathcal{H}(D)$. Since $End_{\mathcal{R}} (\mathcal{R}) \cong \mathcal{R}^{op}$, the map $\trho$ can be regarded as an $A$-algebra anti-homomorphism from $\mathcal{H}(D)$ to $\mathcal{R}$, where $\T_r \mapsto \trho_r(1_{\mathcal{R}})$. 

Let $B$ be a commutative unital $A_0$-algebra and $f:A  \to B$ be an $A_0$-algebra homomorphism. We can now construct a specialization of $\mathcal{H}(D)$ on $B$ using the map $f$. The $A_0$-algebra $B$ has a right $A$-module structure given by $b\cdot a=bf(a)$ for all $a\in A, b\in B$. Let $\mathcal{H}(D)':= B\otimes_A \mathcal{H}(D)$ and $\mathcal{R}':= B\otimes_A\mathcal{R}$. Since $\mathcal{R}= A\otimes_{A_0} \mathcal{R}_0$ then $\mathcal{R}'\cong B\otimes_{A_0} \mathcal{R}_0$ as $B$-algebras. Extending the $A$-algebra anti-homomorphism $\trho: \mathcal{H}(D) \to \mathcal{R}$ we get a $B$-algebra anti-homomorphism $\trho':\mathcal{H}(D)' \to \mathcal{R}'$ that sends $b\T_r$ to $b\trho'_r(1_{\mathcal{R}_0})$, where for any $x \in V^{\Gamma}$ we have 
		\begin{equation}\label{eq:trhoprime}
			\trho_r' ([x]) = \begin{cases}
				f(a_r) [x] + \sum\limits_{\substack{y,e|x \xleftarrow{e} y \\r \in \mathcal{L}(y)}}[x\xleftarrow{e}y] & \text{ if $r \not\in \mathcal{L}(x)$}\\
				-f(b_r) [x] & \text{ if $r \in \mathcal{L}(x).$}\end{cases}
		\end{equation}
The following theorem provides three bimodule structures on the $B$-algebra $\mathcal{R}'$. A special case of this result should be compared to 14.15 in \cite{L1}, as it is pointed out in Remark \ref{rmk:vv'}.
	\begin{theorem}\label{thm:bimod}
		Let $f_1,f_2:A\to B$ be two $A_0$-algebra homomorphisms, and  $\mathcal{H}(D)'$ (resp. $\mathcal{H}(D)''$) be the specialization of $\mathcal{H}(D)$ on $B$ using $f_1$ (resp. $f_2$). Then the $B$-algebra $\mathcal{R}'=B \otimes_{A_0} \mathcal{R}_0$ has an $(\mathcal{H}(D)', \mathcal{H}(D)'')$-bimodule structure given by  
			\[T_r' \cdot b[p] \cdot T_r'' = b\bigg(\trho''_r(1_{\mathcal{R}_0})\bigg)[p]\bigg(\trho_r'(1_{\mathcal{R}_0})\bigg),\]
		where $\trho_r', \trho_r''$ are given as in Eq. $(\ref{eq:trhoprime})$ (replacing $f$ with $f_1$ and $f_2$ respectively). Moreover, $\mathcal{R}'$ has an $(\mathcal{H}(D)',\mathcal{R}')$-bimodule structure and an $(\mathcal{R}',\mathcal{H}(D)'')$, where both (left and right) actions of $\mathcal{R}'$ on itself are given by multiplication.
	\end{theorem}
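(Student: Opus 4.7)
The plan is to deduce everything from the fact (established in the paragraph preceding the theorem) that each of $\trho': \mathcal{H}(D)' \to \mathcal{R}'$ and $\trho'': \mathcal{H}(D)'' \to \mathcal{R}'$ is a $B$-algebra anti-homomorphism, obtained by combining Proposition~\ref{prop:rhodef} with the isomorphism $End_{\mathcal{R}'}(\mathcal{R}') \cong (\mathcal{R}')^{op}$. This last identification is a special case of Proposition~\ref{prop:quotientidemp}(II) and is valid because $\mathcal{R}'$ is unital: $V^\Gamma$ is finite, so Proposition~\ref{prop:AGammaidemp} gives an identity on $A_0[\Gamma]$ which descends to $1_{\mathcal{R}_0} = \sum_{x \in V^\Gamma}[x]$ via Proposition~\ref{prop:quotientidemp}(I). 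Using $f_1$ gives the element $\trho'_r(1_{\mathcal{R}_0})\in \mathcal{R}'$ and using $f_2$ gives $\trho''_r(1_{\mathcal{R}_0})\in \mathcal{R}'$.

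Next I will invoke the standard dictionary for anti-homomorphisms: given an anti-homomorphism $\phi: H \to R$ from an algebra to a unital algebra, one obtains a left $H$-module structure on $R$ via $h \cdot m := m\,\phi(h)$ and a right one via $m \cdot h := \phi(h)\,m$, and both module axioms reduce immediately to associativity in $R$ together with the anti-multiplicativity of $\phi$. Applying this to $\trho'$ and $\trho''$ respectively produces the left action $T_r' \cdot m = m\,\trho'_r(1_{\mathcal{R}_0})$ and the right action $m \cdot T_r'' = \trho''_r(1_{\mathcal{R}_0})\,m$. Evaluating at $m = b[p]$ and using the centrality of $B$ in $\mathcal{R}'$ recovers the displayed formula, and the bimodule compatibility $(T_r' \cdot m) \cdot T_r'' = T_r' \cdot (m \cdot T_r'')$ is literally associativity: both sides equal $\trho''_r(1_{\mathcal{R}_0})\,m\,\trho'_r(1_{\mathcal{R}_0})$.

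For the two remaining bimodule structures I will note that $\mathcal{R}'$ is canonically an $(\mathcal{R}', \mathcal{R}')$-bimodule via its own multiplication. The left $\mathcal{H}(D)'$-action constructed above is right multiplication by elements of $\mathcal{R}'$, which commutes with left multiplication by $\mathcal{R}'$ (again by associativity), yielding the $(\mathcal{H}(D)', \mathcal{R}')$-bimodule; symmetrically the right $\mathcal{H}(D)''$-action is left multiplication by elements of $\mathcal{R}'$ and commutes with right multiplication, yielding the $(\mathcal{R}', \mathcal{H}(D)'')$-bimodule. There is no genuine obstacle in the argument; the only point requiring care is the side-flip induced by anti-multiplicativity, which is exactly why $T_r'$ (indexed by $f_1$) appears on the \emph{right} of the displayed expression while $T_r''$ (indexed by $f_2$) appears on the \emph{left}.
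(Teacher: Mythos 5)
Your proof follows the paper's approach almost verbatim: construct the anti-homomorphisms $\trho', \trho''$ via $End_{\mathcal{R}'}(\mathcal{R}') \cong (\mathcal{R}')^{op}$, use the ``anti-homomorphism dictionary'' to produce the two outer actions, and reduce every compatibility check to associativity of $\mathcal{R}'$. The extra justification you give (unitality of $\mathcal{R}'$ via Propositions \ref{prop:AGammaidemp} and \ref{prop:quotientidemp}, and the careful explanation of why anti-multiplicativity forces $T_r'$ to appear on the right of $[p]$) is correct and genuinely clarifying.

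One caution about the final paragraph, which affects the ``moreover'' clause. You observe (correctly) that the left $\mathcal{H}(D)'$-action, being right multiplication by $\trho_r'(1_{\mathcal{R}_0})$, commutes with \emph{left} multiplication by $\mathcal{R}'$, and you conclude from this that $\mathcal{R}'$ is an $(\mathcal{H}(D)', \mathcal{R}')$-bimodule. But an $(\mathcal{H}(D)', \mathcal{R}')$-bimodule requires the left $\mathcal{H}(D)'$-action to intertwine with a \emph{right} $\mathcal{R}'$-action, and right multiplication does not in general commute with right multiplication. What you have actually shown is a pair of commuting left module structures (over $\mathcal{H}(D)'$ and $\mathcal{R}'$), which is an $(\mathcal{H}(D)', (\mathcal{R}')^{op})$-bimodule; dually for the other claim. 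The paper's proof is equally terse on this point (``all actions commute $\dots$ follows from associativity''), so the mismatch is arguably inherited from the statement of the theorem rather than introduced by you, but it is worth flagging: as literally worded, the deduction ``commutes with left multiplication, hence $(\mathcal{H}(D)', \mathcal{R}')$-bimodule'' is a non-sequitur, and one should either read $\mathcal{R}'$ as $(\mathcal{R}')^{op}$ in that clause or say explicitly that one is packaging two commuting one-sided module structures.
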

	 \begin{proof}
	 Let $B'$ (resp. $B''$) be the $A$-module $B$ via the map $f_1$ (resp. $f_2$). Notice that both $B'\otimes_A (A\otimes_{A_0}\mathcal{R}_0)$ and $B''\otimes_A (A\otimes_{A_0}\mathcal{R}_0)$ are isomorphic to $\mathcal{R}'=B\otimes_{A_0}\mathcal{R}_0$ as $B$-algebras. Thus extending the map $\trho: \mathcal{H}(D) \to \mathcal{R}$ as described above we get $B$-algebra anti-homomorphisms $\mathcal{H}(D)'\to \mathcal{R}'$ (resp. $\mathcal{H}(D)''\to \mathcal{R}')$ sending $bT_r'$ to $b\trho'_r(1_{\mathcal{R}_0})$ (resp. $bT_r''$ to $b\trho''_r(1_{\mathcal{R}_0})$), where both $\trho_r'$ and $\trho_r''$ are given as in Eq. $(\ref{eq:trhoprime})$, replacing $f$ with $f_1$ and $f_2$ respectively. Thus we get the desired left $\mathcal{H}(D)'$ action and right $\mathcal{H}(D)''$ action on $\mathcal{R}'$. The fact that all actions commute (including those in the last statement) follows from associativity of multiplication in $\mathcal{R}'$. 
	 	 \end{proof}
One can check that the $A$-algebra $\mathcal{H}(D)'$ is isomorphic to the Hecke algebra $\mathcal{H}(D')$, where $D'$ is the datum $(W,S,B,(f_1(a_r))_{r\in S},(f_1(b_r))_{r\in S})$. Similarly, $\mathcal{H}(D)''$ is isomorphic to $\mathcal{H}(D'')$, where $D''=(W,S,B,(f_2(a_r))_{r\in S},\allowbreak(f_2(b_r))_{r\in S})$.
	\begin{corollary}
		Let $\Gamma$ be a pre-$D$-graph with finitely many vertices and edges, and $f$ be an $A_0$-algebra endomorphism of $A$, then we get an $(\mathcal{H}(D),\allowbreak\mathcal{H}(D)')$-bimodule structure on the quotient path algebra $\mathcal{R}$, where $\mathcal{H}(D)$ is the Hecke algebra associated to $D$ and $\mathcal{H}(D)'$ is its specialization via $f$. Moreover, $\mathcal{H}(D)'$ is isomorphic as $A$-algebra to the Hecke algebra $\mathcal{H}(D')$ where $D'=(W,S,A,(f(a_r))_{r\in S},(f(b_r))_{r\in S})$.
	\end{corollary}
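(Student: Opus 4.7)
The plan is to apply Theorem \ref{thm:bimod} in the special case where the target algebra $B$ is taken to be $A$ itself (regarded as an $A_0$-algebra via the inclusion $A_0\subseteq A$), with the two $A_0$-algebra homomorphisms chosen as $f_1:=\mathrm{id}_A$ and $f_2:=f$. Both are indeed $A_0$-algebra homomorphisms, since $f$ is assumed to be an $A_0$-algebra endomorphism of $A$. With these choices the hypotheses of Theorem \ref{thm:bimod} are met.

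First I would identify the objects produced by Theorem \ref{thm:bimod} under these specializations. Since $f_1=\mathrm{id}_A$, the specialization $\mathcal{H}(D)'$ produced by the theorem is $A\otimes_A \mathcal{H}(D)$ with the trivial right $A$-action on the left factor, which is canonically isomorphic as an $A$-algebra to $\mathcal{H}(D)$ itself; so the resulting left action on $A\otimes_{A_0}\mathcal{R}_0$ is a genuine left $\mathcal{H}(D)$-action. Similarly, the specialization of $\mathcal{H}(D)$ obtained from $f_2=f$ is exactly the algebra named $\mathcal{H}(D)'$ in the statement of the corollary, which gives the right action. Finally, $A\otimes_{A_0}\mathcal{R}_0=\mathcal{R}$ by Definition \ref{def:qpa}, so the bimodule structure produced by Theorem \ref{thm:bimod} lives on $\mathcal{R}$, as desired. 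The commutativity of the two actions is inherited directly from the theorem.

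For the "moreover" part, I would verify the isomorphism $\mathcal{H}(D)'\cong \mathcal{H}(D')$ claimed just before the statement of the corollary, specialized to our situation. Concretely, $\mathcal{H}(D)'=A\otimes_A\mathcal{H}(D)$, where the right $A$-module structure on the left factor is given by $a\cdot a'=af(a')$. Using the presentation of $\mathcal{H}(D)$, the quadratic relations $(T_r-a_r)(T_r+b_r)=0$ become, after extending scalars along $f$, the relations $(T_r-f(a_r))(T_r+f(b_r))=0$, while the braid relations are unchanged because they have coefficients in $\mathbb{Z}\subseteq A_0$. These are precisely the defining relations of $\mathcal{H}(D')$ for $D'=(W,S,A,(f(a_r))_{r\in S},(f(b_r))_{r\in S})$, so the universal property of $\mathcal{H}(D')$ provides an $A$-algebra map $\mathcal{H}(D')\to\mathcal{H}(D)'$ sending $T_r\mapsto 1\otimes T_r$, and the alternate basis description in Remark \ref{rmk:alternateH} shows it is a bijection on the distinguished $A$-bases $\{T_w\}_{w\in W}$.

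The main obstacle is essentially bookkeeping: one must be careful that the two sides of $\mathcal{R}$ in Theorem \ref{thm:bimod} are assigned to the correct specializations (note the opposite orderings of $\trho_r'$ and $\trho_r''$ in the displayed bimodule action), and that the inclusion $A_0\subseteq A$ combined with $A$ being free over $A_0$ correctly lets us take $B=A$. No new computation beyond the identifications above is required.
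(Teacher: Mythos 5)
Your proposal is correct and follows exactly the same approach as the paper: apply Theorem \ref{thm:bimod} with $B=A$, $f_1=\id_A$, $f_2=f$. The paper's own proof is a one-liner; you simply spell out the identifications (e.g., $A\otimes_A\mathcal{H}(D)\cong\mathcal{H}(D)$) and the ``moreover'' part that the paper relegates to the remark preceding the corollary, so the extra detail is welcome but not a departure in method.
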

	\begin{proof}
	Let $B=A, f_1=\id_A$, and $f_2=f$ in Theorem \ref{thm:bimod}.
	\end{proof}
	\begin{remark}\label{rmk:vv'}
	Let $A_0=\mathbb{Z}, A=\mathbb{Z}[v,v^{-1}]$, where $A$ is consider as a free $A_0$-module with basis $\{v^i\}_{i\in \mathbb{Z}}$. Let $a_r=v, b_r=v^{-1}$ for all $r\in S$. Let $v'$ be another indeterminate and consider the $A_0$-algebra $B=\mathbb{Z}[v,v^{-1},v',v'^{-1}]$ with $A_0$-algebra homomorphism $f_1:A\to B$ (resp. $f_2:A\to B$) taking $v^k$ to $v^k$ (resp. $v^k$ to $v'^k$) for every $k \in \mathbb{Z}$. The result of Theorem $\ref{thm:bimod}$ under these conditions should be compared to 14.15 in \cite{L1}. We can further specialize to $v'=1$ to get an $(\mathcal{H}(D),A[W])$-bimodule structure on $\mathcal{R}$.
	\end{remark}

%
\subsection{Duality}\label{subsec:duality}

In this section we describe several connections between a dualizable pre-$D$-graph $\Gamma$ and its dual pre-$D$-graph $\Gamma^d$, as well as connections between the quotient path algebras associated to these two pre-$D$-graphs.
Let $D=(W,S,A,(a_r),(b_r))$. Let $(D,A_0)$ be an extended Hecke datum, and $\Gamma$ be a dualizable pre-$D$-graph with $\Gamma^d$ as the dual pre-$D^d$-graph (recall from Section \ref{subsec:IHAD-duality} that $D^d=(W,S,A,(-b_r),(-a_r))$). For $p =(x_0 \xleftarrow{e_1} \cdots \xleftarrow{e_n}x_n) \in P^{\Gamma}$, let $p^{d} :=(x_n^d \xleftarrow{e_n^d} \cdots \xleftarrow{e_1^d}x_0) \in P^{\Gamma^{d}}$. Let $\phi_0: A_0[\Gamma] \rightarrow A_0[\Gamma^d]$ be the $A_0$-algebra anti-homomorphism such that $\phi_0( p)=p^{d}$, and $\phi:=\id_A\otimes\phi_0: A[\Gamma]\rightarrow A[\Gamma^d]$. By the way we defined $\Gamma^d$, it is clear $\phi_0$ is an anti-isomorphism, thus so is $\phi$. From now on, for any $q\in A[\Gamma]$ let $q^d := \phi(q) \in A[\Gamma^d]$. The map $\phi$ defines a right $A[\Gamma]$-module structure on $A[\Gamma^d]$ given by $q^d\cdot p:=  p^dq^d$ for any $p \in A[\Gamma], q^d \in A[\Gamma^d]$. 

Consider the left $A[\Gamma]$-module structure on $A[\Gamma]$ given by  multiplication, then we get an $(A[\Gamma],A[\Gamma])$-bilinear form $<,>: A[\Gamma] \times A[\Gamma^d] \rightarrow A[\Gamma]$ such that $<p,q^d>:= p\phi^{-1}(q^d)=pq$. The following lemma will be key in proving certain duality properties in Theorem \ref{thm:Rdual}.
	\begin{lemma}\label{lemma:phionrho}
		For any $p \in A[\Gamma], q^d \in A[\Gamma^d]$ we have
			\[ <\rho^{\Gamma}_{r_n} \cdots \rho_{r_1}^{\Gamma} (p),q^d>=<p,\rho_{r_1}^{\Gamma^d} \cdots \rho_{r_n}^{\Gamma^d} (q^d)>.\]
	\end{lemma}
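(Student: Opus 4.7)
The plan is to induct on $n$, with the crucial case being $n=1$. Once $n=1$ is established, for $n \geq 2$ we apply it to the outermost operator to obtain
$$\langle \rho_{r_n}^{\Gamma}(\rho_{r_{n-1}} \cdots \rho_{r_1}^{\Gamma}(p)), q^d \rangle = \langle \rho_{r_{n-1}}^{\Gamma} \cdots \rho_{r_1}^{\Gamma}(p), \rho_{r_n}^{\Gamma^d}(q^d)\rangle,$$
and then invoke the inductive hypothesis on the $n-1$ remaining operators applied to $p$, paired against $\rho_{r_n}^{\Gamma^d}(q^d)$. So the entire statement reduces to verifying the $n=1$ identity $\langle \rho_r^\Gamma(p), q^d \rangle = \langle p, \rho_r^{\Gamma^d}(q^d) \rangle$.

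For $n=1$, by $A$-bilinearity of $\langle \cdot, \cdot \rangle$ it suffices to treat the case when $p \in P^{\Gamma}$ and $q \in P^{\Gamma}$ are paths. Since $\rho_r^\Gamma \in End_{A[\Gamma]}(A[\Gamma])$ is a left $A[\Gamma]$-module homomorphism and $p \cdot s(p) = p$ via the idempotents of Proposition \ref{prop:AGammaidemp}, we have $\rho_r^\Gamma(p) = p \cdot \rho_r^\Gamma(s(p))$; symmetrically $\rho_r^{\Gamma^d}(q^d) = q^d \cdot \rho_r^{\Gamma^d}(s(q^d))$. Using that $\phi^{-1}$ is an $A$-algebra anti-isomorphism, the target identity rewrites as
$$p \cdot \rho_r^\Gamma(s(p)) \cdot q = p \cdot \phi^{-1}\!\bigl(\rho_r^{\Gamma^d}(s(q^d))\bigr) \cdot q.$$
Both sides vanish unless $s(p) = t(q)$, so set $x := s(p) = t(q)$ (whence $s(q^d) = x^d$).

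The verification then splits according to whether $r \in \mathcal{L}^\Gamma(x)$ or not, using formula (\ref{eq:rhodeforig}) for $\rho_r^\Gamma(x)$ and the analogous formula for $\rho_r^{\Gamma^d}(x^d)$ together with the duality definitions $\mathcal{L}^{\Gamma^d}(x^d) = \mathcal{L}^\Gamma(x)^\complement$ and $\mu^{\Gamma^d}(y^d \xleftarrow{e^d} x^d) = \mu^\Gamma(x \xleftarrow{e} y)$. In each case the key observation is that the off-diagonal summands collapse once sandwiched between $p$ and $q$: any term $\mu(e)(x \xleftarrow{e} y)$ in $\rho_r^\Gamma(x)$ survives multiplication by $q$ on the right only if $y = t(q) = x$ (i.e.\ $e$ is a loop at $x$), but this combined with the label condition $r \in \mathcal{L}^\Gamma(y)$ attached to the sum forces $r \in \mathcal{L}^\Gamma(x)$, and similarly for $\Gamma^d$. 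Hence both sides reduce to just the diagonal coefficient times $pq$, equal to $a_r pq$ or $-b_r pq$ depending on the case.

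The main obstacle will be the careful bookkeeping of which edge-terms survive after the left and right multiplications by paths; once one recognizes that composability forces any surviving off-diagonal edge to be a loop at $x$ and that the label conditions then rule this out, the proof collapses to matching the two scalars dictated by the label of $x$ in $\Gamma$ versus the opposite label in $\Gamma^d$.
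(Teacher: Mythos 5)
The induction structure and the reduction to $n=1$ on paths via the left $A[\Gamma]$-module property of $\rho_r$ are fine, and the rewriting of both sides as $p \cdot \rho_r^\Gamma(s(p)) \cdot q$ and $p \cdot \phi^{-1}\!\bigl(\rho_r^{\Gamma^d}(s(q^d))\bigr) \cdot q$ is exactly the right move. But the sentence ``Both sides vanish unless $s(p) = t(q)$'' is false, and it is precisely the case $s(p) \neq t(q)$ that carries the actual content of the lemma. Suppose $r \notin \mathcal{L}^\Gamma(s(p))$ and $r \in \mathcal{L}^\Gamma(t(q))$. Then expanding $\rho_r^\Gamma(s(p))$ by formula (\ref{eq:rhodeforig}) and multiplying by $q$ on the right, the diagonal term $a_r\, pq$ dies because $s(p)\neq t(q)$, but an edge term $p(s(p)\xleftarrow{e} y)q$ survives exactly when $y = t(q)$, i.e.\ when there is an edge $e$ from $t(q)$ to $s(p)$. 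Since the sum is over $y$ with $r \in \mathcal{L}^\Gamma(y)$ and by hypothesis $r \in \mathcal{L}^\Gamma(t(q))$, these terms do \emph{not} cancel, and the left side equals $\sum_{e} p(s(p)\xleftarrow{e} t(q))q$, which is generically nonzero. One must then check that the right side produces the same sum; this works because the edges of $\Gamma^d$ are the reversed edges of $\Gamma$ and $\mathcal{L}^{\Gamma^d}$ is the complemented label, so the surviving edge terms on the $\Gamma^d$ side are indexed by the same set of edges with the same condition. The paper's proof handles this by organizing the $n=1$ computation into four cases based on $\mathcal{L}^{\Gamma}(s(p))$ and $\mathcal{L}^{\Gamma^d}(s(q^d))$ as two independent conditions, and the case you are missing is the first of its four cases (the only one in which the answer is a nontrivial edge sum rather than a scalar times $pq$).

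Two smaller points: the edge contributions in $\rho_r^\Gamma(x)$ are $(x \xleftarrow{e} y)$, not $\mu(e)(x \xleftarrow{e} y)$ --- the map $\rho_r$ is defined on the path algebra and does not involve $\mu$ (that is $\tau_r$); and the observation that ``composability forces $e$ to be a loop at $x$'' is only valid after one has (incorrectly) fixed $s(p) = t(q)$, so it cannot be used to dismiss all off-diagonal terms. Once you drop the false vanishing claim and verify the edge-sum case directly, the rest of your outline matches the paper's argument.
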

	\begin{proof}
		To prove the statement we first show it is true when $n=1$. Let $RHS$ (resp. $LHS$) be the right (resp. left) hand side of the equation. Let $p,q \in P^{\Gamma}$ (so $q^d\in P^{\Gamma^d}$), then it can be checked
		\small{\[ LHS=RHS=\begin{cases}
		 \sum_e p\big(s(p)\xleftarrow{e} t(q)\big)q & \tif r_1\not\in \mathcal{L}^{\Gamma}(s(p))  \tand r_1\not\in \mathcal{L}^{\Gamma^d}(s(q^d))\\ 
		-b_{r_1} pq & \tif r_1\in \mathcal{L}^{\Gamma}(s(p)) \tand r_1\not\in \mathcal{L}^{\Gamma^d}(s(q^d))\\
		 a_{r_1}pq & \tif r_1\not\in \mathcal{L}^{\Gamma}(s(p)) \tand r_1\in \mathcal{L}^{\Gamma^d}(s(q^d))\\
		 0 & \tif r_1\in \mathcal{L}^{\Gamma}(s(p)) \tand r_1\in \mathcal{L}^{\Gamma^d}(s(q^d)).
		 \end{cases}\]}
If $p,q$ are elements in $A[\Gamma]$ we use bilinearity of $<,>$ and the result above to get $<\rho_{r_1}^{\Gamma} (p),q^d>=<p,\rho_{r_1}^{\Gamma^d}(q^d)>$. An easy induction completes the proof.
	\end{proof}
	The following theorem provides several duality properties regarding a dualizable pre-$D$-graph $\Gamma$ and its dual $\Gamma^d$.
	\begin{theorem}\label{thm:Rdual}
		For any dualizable pre-$D$-graph $\Gamma$, the following statements hold.
			\begin{enumerate}[(I)]
				\item \label{property:isoR} $\mathcal{R}^{\Gamma^d}_0 \cong (\mathcal{R}^{\Gamma}_0)^{op}$ and $\mathcal{R}^{\Gamma^d} \cong (\mathcal{R}^{\Gamma})^{op}$ as $A_0$-algebras and $A$-algebras respectively. 
				\item \label{property:bilform}The $(A[\Gamma],A[\Gamma])$-bilinear form $<,>$ descends to a $(\mathcal{R}^{\Gamma},\mathcal{R}^\Gamma)$-bilinear form $\allowbreak <,>: \mathcal{R}^{\Gamma} \times \mathcal{R}^{\Gamma^d} \rightarrow \mathcal{R}^{\Gamma}$ such that $<[p],[q^d]>=[pq]$.
				
				\item \label{property:dualityR}The representations $\trho^\Gamma$ and $\trho^{\Gamma^d}$ of $\mathcal{H}(D)$ are contragredient in the following sense, for $[p]\in  \mathcal{R}^{\Gamma}, [q^{d}] \in \mathcal{R}^{\Gamma^d}, h \in \mathcal{H}(D)$ we have 
				\[<\trho^{\Gamma}(h)([p]),[q^{d}]>=<[p],\trho^{\Gamma^d}(h^\flat)([q^{d}])>,\] 
				where $(\cdot)^\flat$ is the $A$-algebra anti-isomorphism of $\mathcal{H}(D)$ used in Proposition \ref{prop:dualrep}. 
			\end{enumerate} 
	\end{theorem}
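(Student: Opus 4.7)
The plan is to prove (I), (II), (III) in order, using the anti-isomorphism $\phi_0:A_0[\Gamma]\to A_0[\Gamma^d]$ as the main engine throughout.

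For (I), I would show that $\phi_0$ descends to an anti-isomorphism $\mathcal{R}_0^{\Gamma}\to \mathcal{R}_0^{\Gamma^d}$ (equivalently, $\mathcal{R}_0^{\Gamma^d}\cong(\mathcal{R}_0^{\Gamma})^{op}$), and then deduce the $A$-algebra statement by tensoring with $A$. The heart of the argument is the containment $\phi_0(J_0^{\Gamma})\subseteq J_0^{\Gamma^d}$. Write $F(x):=\bigl((\rho_r^{\Gamma}\rho_s^{\Gamma}\cdots)_{m_{r,s}}-(\rho_s^{\Gamma}\rho_r^{\Gamma}\cdots)_{m_{r,s}}\bigr)(x)$ and $G(y^d)$ for the analogous expression on the $\Gamma^d$ side, and let $g_{x,i,r,s}:=\sum_p X_{p,i}^{r,s,x}p$ and $g^d_{y^d,i,r,s}$ be the corresponding generators of $J_0^{\Gamma}$ and $J_0^{\Gamma^d}$ from Definition \ref{def:J0}. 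Lemma \ref{lemma:phionrho}, combined with the $(-1)^{m_{r,s}-1}$ sign analysis already carried out in the proof of Proposition \ref{prop:dualrep}(\ref{property:dualisDgraph}), yields
\[<F(x),y^d>=(-1)^{m_{r,s}-1}<x,G(y^d)>\quad\text{in }A[\Gamma].\]
Expanding $F(x)=\sum_i c_i\otimes g_{x,i,r,s}$ and $G(y^d)=\sum_i c_i\otimes g^d_{y^d,i,r,s}$ over the $A_0$-basis $\{c_i\}_{i\in I}$ and using freeness, I equate the coefficient of each $c_i$ to obtain
\[g_{x,i,r,s}\cdot y \;=\; (-1)^{m_{r,s}-1}\, x\cdot \phi_0^{-1}(g^d_{y^d,i,r,s})\quad\text{in }A_0[\Gamma].\]
Applying $\phi_0$ (which reverses multiplication) and summing over the finitely many $y\in V^{\Gamma}$ appearing as endpoints of paths in $g_{x,i,r,s}$ then gives
\[\phi_0(g_{x,i,r,s})\;=\;(-1)^{m_{r,s}-1}\sum_{y}\, g^d_{y^d,i,r,s}\cdot x^d,\]
which lies in $J_0^{\Gamma^d}$ since each $g^d_{y^d,i,r,s}$ is a generator and $J_0^{\Gamma^d}$ is two-sided. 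Hence $\phi_0(J_0^{\Gamma})\subseteq J_0^{\Gamma^d}$. The reverse containment comes from running the same argument for $\Gamma^d$ and transporting along the isomorphism $(\Gamma^d)^d\cong \Gamma$ of Proposition \ref{prop:dualrep}(\ref{property:dualdual}), using Remark \ref{rmk:dualiso} at the level of path algebras.

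Part (II) follows almost immediately. The form $<p,q^d>=pq$ is $(A[\Gamma],A[\Gamma])$-bilinear by construction, and since $\phi$ carries $J^{\Gamma}$ bijectively onto $J^{\Gamma^d}$ (by (I), tensored with $A$), the element $pq$ lies in $J^{\Gamma}$ whenever $p\in J^{\Gamma}$ (ideal property) or $q^d\in J^{\Gamma^d}$ (since then $q=\phi^{-1}(q^d)\in J^{\Gamma}$). Thus the form descends to $\mathcal{R}^{\Gamma}\times\mathcal{R}^{\Gamma^d}\to\mathcal{R}^{\Gamma}$ with the stated formula. For (III), by $A$-linearity and the basis description in Remark \ref{rmk:alternateH}, it is enough to treat $h=\T_w$ for a reduced expression $w=r_1\cdots r_n$, in which case $h^\flat=\T_{w^{-1}}=\T_{r_n}\cdots\T_{r_1}$ and $\trho^{\Gamma}(h)=\trho_{r_1}\cdots\trho_{r_n}$, $\trho^{\Gamma^d}(h^\flat)=\trho^d_{r_n}\cdots\trho^d_{r_1}$. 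Lemma \ref{lemma:phionrho} gives the required identity at the level of $A[\Gamma]$; pushing through the quotient map $\pi$ and using $\pi\circ\rho_r=\trho_r\circ\pi$ together with (II) yields the contragredience in $\mathcal{R}^{\Gamma}$.

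The main obstacle will be the ideal-level computation in (I): Lemma \ref{lemma:phionrho} expresses the duality between $\rho^{\Gamma}$ and $\rho^{\Gamma^d}$ in terms of the bilinear form, whereas $J_0^{\Gamma}$ and $J_0^{\Gamma^d}$ are defined by basis expansions of the braid commutators. The mechanism converting one into the other is the pair-and-expand step above: pair with a single vertex $y^d$ to isolate paths ending at $y$, compare coefficients using the freeness of $A$ over $A_0$, apply $\phi_0$, and finally sum over the finitely many relevant $y$. The extra sign $(-1)^{m_{r,s}-1}$ is a unit in $A_0$ and so does not affect the ideal generated. Once (I) is in hand, (II) and (III) are essentially formal.
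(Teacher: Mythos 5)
Your proposal follows essentially the same route as the paper's proof: Lemma~\ref{lemma:phionrho} together with the $(-1)^{m_{r,s}-1}$ parity sign gives the pairing identity, coefficient comparison over the $A_0$-basis $\{c_i\}$ yields $\phi_0(J_0^\Gamma)\subseteq J_0^{\Gamma^d}$, the reverse containment comes from applying the argument to $\Gamma^d$ together with Remark~\ref{rmk:dualiso}, and (II) and (III) then follow formally just as in the paper. The only cosmetic difference is that you derive the well-definedness of the descended form in (II) directly from the containment $\phi(J^\Gamma)=J^{\Gamma^d}$ established in (I), while the paper repackages the same fact as a left/right radical statement; the content is identical.
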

	\begin{proof}
		To prove $(\ref{property:isoR})$ first we show $\phi_0(J_0^{\Gamma}) \subseteq J_0^{\Gamma^d}$. 
		Let $\sum_{p} X_{p,i}^{r,s,x} p$ (resp. $\sum_{q} Y_{q,i}^{r,s,x^d} q$) be the generators of $J_0^{\Gamma}$ (resp. $J_0^{\Gamma^d}$) as in Eq. $(\ref{eq:relations})$. Since $\sum_{p} X_{p,i}^{r,s,x} p=\sum_{z\in V}(\sum_{p} X_{p,i}^{r,s,x} p)z$, then $J_0^{\Gamma}$ is generated by the set of elements of the form $\sum_{p} X_{p,i}^{r,s,x} pz$ where $z\in V^{\Gamma}$. Similarly, $J_0^{\Gamma^d}$ is generated by elements of the form $\sum_{q} Y_{q,i}^{r,s,x^d} qz^d$ where $z^d \in V^{\Gamma^d}$. By definition of $<,>$ we have 
		\begin{equation}\label{eq:bilform}
		\sum_{i,p} c_iX_{p,i}^{r,s,x} pz=<(\rho^{\Gamma}_r\rho^{\Gamma}_s\rho^{\Gamma}_r\dots)_{m_{r,s}} (x) - (\rho_s^{\Gamma}\rho_r^{\Gamma}\rho_s^{\Gamma}\dots)_{m_{r,s}} (x),z^d>.
		\end{equation}
		 By Lemma \ref{lemma:phionrho} this is also equal to 
		 	\[(-1)^{{m_{r,s}}-1}<x, (\rho^{\Gamma^d}_r\rho^{\Gamma^d}_s\rho^{\Gamma^d}_r\dots)_{m_{r,s}} (z^d)-  (\rho_s^{\Gamma^d}\rho_r^{\Gamma^d}\rho_s^{\Gamma^d}\dots)_{m_{r,s}} (z^d)>.\] 	
		By definition of $<,>$ this is equal to $(-1)^{m_{r,s}-1}\phi^{-1}(\sum_{i,q}c_iY_{q,i}^{r,s,z^d}qx^d)$. Picking out the coefficient of $c_i$ we get 
	\[\sum_{p} X_{p,i}^{r,s,x} pz=(-1)^{m_{r,s}-1}\phi_0^{-1}(\sum_{q}Y_{q,i}^{r,s,z^d}qx^d) \in J_0^{\Gamma},\] thus $\phi_0(J_0^{\Gamma}) \subseteq J_0^{\Gamma^d}$. Therefore, the map $\tphi_0: \mathcal{R}_0^{\Gamma} \rightarrow \mathcal{R}_0^{\Gamma^d}$ where $\tphi_0([p]) =[\phi_0(p)]=[p^{d}]$ is a well-defined $A_0$-algebra anti-homomorphism. Applying this result to $\Gamma^d$, one obtains an $A_0$-algebra anti-homomorphism $\tphi'_0:\mathcal{R}_0^{\Gamma^d} \rightarrow \mathcal{R}_0^{(\Gamma^d)^d}$. By Remark \ref{rmk:dualiso} we have an isomorphism $\iota: \mathcal{R}_0^{(\Gamma^d)^d} \rightarrow \mathcal{R}_0^{\Gamma}$. It is now straightforward to check that $\tphi_0$ and $\iota\circ \tphi_0'$ are inverses of each other, thus $\tphi_0$ is an $A_0$-algebra anti-isomorphism. It now follows that $\tphi:=\id_A\otimes \tphi_0: \mathcal{R}^{\Gamma} \rightarrow\mathcal{R}^{\Gamma^d}$ is an $A$-algebra anti-isomorphism.
				
		To prove $(\ref{property:bilform})$ let $(,): A[\Gamma]\times A[\Gamma^d]\rightarrow \mathcal{R}^{\Gamma}$ be the composition $\pi^{\Gamma}\circ <,>$ (so $(p,q^d)=[pq]$). It follows $(,)$ is an $(A[\Gamma],A[\Gamma])$-bilinear form. By Eq. $(\ref{eq:bilform})$ we have that $J^{\Gamma}$ is contained in the left radical of $(,)$. This, together with  Lemma \ref{lemma:phionrho},  implies that $J^{\Gamma^d}$ is contained in the right radical of $(,)$. Considering the left $\mathcal{R}^{\Gamma}$-module structure on $\mathcal{R}^{\Gamma}$ given by multiplication, and the right $\mathcal{R}^{\Gamma}$-module structure on $\mathcal{R}^{\Gamma^d}$ given by $[q^d]\cdot [p] := [p^dq^d]$ we get an $(\mathcal{R}^{\Gamma},\mathcal{R}^{\Gamma})$-bilinear form $<,>: \mathcal{R}^{\Gamma} \times \mathcal{R}^{\Gamma^d} \rightarrow \mathcal{R}^{\Gamma}$ such that $<[p],[q^d]>=(p,q^d)=[pq]$.
				 
		To prove $(\ref{property:dualityR})$, by Remark \ref{rmk:alternateH} $\mathcal{H}(D)$ is generated as an $A$-module by the elements $T_{w}:= T_{r_1} \cdots T_{r_n}$ (where $w=r_1\cdots r_n\in W$ is any reduced expression for $w$). By Lemma \ref{lemma:phionrho} and Theorem \ref{thm:Rdual}$(\ref{property:bilform})$ we have $<\trho^{\Gamma}(T_w)([p]),[q^{d}]>=<[p],\trho^{\Gamma^d}(T_{w^{-1}})([q^{d}])>$. Since $\trho^{\Gamma}$ and $\trho^{\Gamma^d}$ are $A$-algebra homomorphisms then we get the result for any $h \in \mathcal{H}(D)$.
\end{proof}
\section{Relationship between various representations of $\mathcal{H}(D)$}\label{sec:EandR}
In this section we study the relationship between several representations of the Hecke algebra $\mathcal{H}(D)$ (given a $D$-graph $\Gamma$). We first show there is an equivariant map between the representations $\tau$ and $\trho$ (Theorem \ref{thm:eqmap}); in fact this follows from a stronger result (Theorem \ref{thm:eqmapmorita}). The general idea is to consider a Morita equivalent representation of $\tau$ (denoted $\Tau$) on a matrix algebra and study its connection with $\trho$. This approach provides more general and interesting results.
\subsection{Equivariant map between $\mathcal{E}$ and $\mathcal{R}$.}\label{sec:QPA:subsec:Equivariantmap} 
Let $(D,A_0)$ be an extended Hecke datum, $\Gamma$ be a $D$-graph with vertex set $V$, edge set $E$ and set of paths $P$ (in this section we do not use the superscripts $\Gamma$ as we do not consider any interaction between two distinct $D$-graphs). Let $p=(x_0 \xleftarrow{e_1} x_1 \cdots \xleftarrow{e_{n}} x_n) \in P$.  We extend $\mu: P \rightarrow A$ by setting $\mu(p):= \mu(e_1) \cdots \mu(e_{n})$ for any path of length $n\geq 1$, and $\mu(x):=1_A$  for $x \in V$. Define the map $u_0: A_0[\Gamma] \rightarrow \mathcal{E}$ as 
 	$ u_0(p) =\mu(p) s(p)$ for $p \in P$,
and extend it linearly to any elements in $A_0[\Gamma]$. Since $A_0[\Gamma]$ is a free $A_0$-module on $P$ then $u_0$ is an $A_0$-module homomorphism. We apply Proposition \ref{prop:adjoint} (regarding $\mathcal{E}$ as $\Hom_A(A,\mathcal{E})$) to get an $A$-module homomorphism $u:A[\Gamma] \rightarrow \mathcal{E}$ ($u$ is the adjoint map of $u_0)$. One can check $u(ap) = au_0(p) = a \mu(p)s(p),$ for all $a\in A, p\in P$.
	\begin{theorem}\label{thm:eqmap}
	The following statements hold.
	\begin{enumerate}[(I)]
		\item \label{property:mapufactors}The maps $u_0$ and $u$ factor through $\mathcal{R}_0$ and $\mathcal{R}$ respectively i.e., there exist a unique $A_0$-module homomorphism $\tilde{u}_0$ and a unique $A$-module homomorphism $\tilde{u}$ such that the following diagrams commute
		\begin{equation}\label{eq:uandtau}
		\begin{tikzpicture}[description/.style={fill=white,inner sep=2pt},baseline=(current  bounding  box.center)]
			\matrix (m) [matrix of math nodes, row sep=3em, column sep=2.5em, text height=1.5ex, text depth=0.25ex]
			{ A_0[\Gamma] & & \mathcal{E} \\
			& \mathcal{R}_0 & \\ };
			\path[->,font=\scriptsize]	
			(m-1-1) edge node[auto] {$ u_0$} (m-1-3)
				  edge node[auto,left] {$ \pi_0 $} (m-2-2)
			(m-2-2) edge node[auto,right] {$  \tilde{u}_0 $} (m-1-3);
		\end{tikzpicture}
		\begin{tikzpicture}[description/.style={fill=white,inner sep=2pt},baseline=(current  bounding  box.center)]
			\matrix (m) [matrix of math nodes, row sep=3em, column sep=2.5em, text height=1.5ex, text depth=0.25ex]
			{ A[\Gamma] & & \mathcal{E} \\
			& \mathcal{R} & \\ };
			\path[->,font=\scriptsize]	
			(m-1-1) edge node[auto] {$ u$} (m-1-3)
				  edge node[auto,left] {$ \pi $} (m-2-2)
			(m-2-2) edge node[auto,right] {$  \tilde{u} $} (m-1-3);
		\end{tikzpicture}
		\end{equation}
where $\pi,\pi_0$ are the natural quotient maps. Moreover, identifying $\mathcal{E}$ as $\allowbreak\Hom_A( A,\mathcal{E})$, the map $\tilde{u}$ is the adjoint map of $\tilde{u}_0$ as in Proposition \ref{prop:adjoint}.
		\item \label{property:mapuequiv}The map $\tilde{u}$ is an equivariant map (i.e., an $\mathcal{H}(D)$-module homomorphism) between  $\mathcal{R}$ and $\mathcal{E}$.
		\end{enumerate}
	\end{theorem}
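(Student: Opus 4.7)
The entire argument rests on a single intertwining identity:
\[ u \circ \rho_r = \tau_r \circ u \qquad \text{as maps } A[\Gamma] \to \mathcal{E}, \text{ for every } r \in S. \]
I would first verify this on a vertex $x \in V$ by direct comparison of $(\ref{eq:rhodeforig})$ with $(\ref{eq:deftaur})$: applying $u$ to $\rho_r(x)$ turns each edge $x \xleftarrow{e} y$ in the sum into $\mu(e) y$ and leaves the $a_r x$ or $-b_r x$ term intact, exactly matching $\tau_r(x)$ in both cases $r \in \mathcal{L}(x)$ and $r \notin \mathcal{L}(x)$. The identity extends from vertices to all of $A[\Gamma]$ by combining the facts that $\rho_r$ is right $A[\Gamma]$-linear (since $\rho_r \in End_{A[\Gamma]}(A[\Gamma])$) and that $u$ is itself a right $A[\Gamma]$-module homomorphism, where $\mathcal{E}$ is equipped with the natural right action $y \cdot q = \mu(q) s(q)$ if $y = t(q)$ and $0$ otherwise.

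Iterating the intertwining and applying it to a vertex $x$ transforms the defining relation $(\ref{eq:relations})$ into
\[ \sum_{i \in I} c_i\, u_0(G^{r,s,x,i}) = u(B^{r,s,x}) = \bigl((\tau_r\tau_s\tau_r\cdots)_{m_{r,s}} - (\tau_s\tau_r\tau_s\cdots)_{m_{r,s}}\bigr)(x) = 0, \]
where $G^{r,s,x,i} := \sum_p X^{r,s,x}_{p,i}\, p \in A_0[\Gamma]$ and the last equality is the $D$-graph braid condition $(\ref{WDHecke3})$. The main obstacle is to split this single $A$-linear relation into the individual vanishings $u_0(G^{r,s,x,i}) = 0$, one for each $i \in I$; this is precisely the step where the freeness of $A$ over $A_0$ with basis $\{c_i\}$ must be used. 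The cleanest route is to first prove the Morita-equivalent Theorem $\ref{thm:eqmapmorita}$, in which the $A_0$-components of $u$ are naturally separated inside a matrix algebra over $\mathcal{R}_0$, and then derive $(\ref{property:mapufactors})$ as a specialization. Once the individual vanishings are in hand, propagation to the whole two-sided ideal $J_0$ follows from the right-module property of $u_0$ together with the fact that each $G^{r,s,x,i}$ is supported on paths with target $x$: for any $q_1, q_2 \in A_0[\Gamma]$,
\[ u_0(q_1 G^{r,s,x,i} q_2) = u_0(q_1) \cdot G^{r,s,x,i} \cdot q_2 = \mu(q_1)\,\delta_{s(q_1),x}\, u_0(G^{r,s,x,i}) \cdot q_2 = 0. \]
Hence $u_0$ descends to a unique $A_0$-module map $\tilde u_0 : \mathcal{R}_0 \to \mathcal{E}$ satisfying $\tilde u_0 \circ \pi_0 = u_0$; extending scalars then produces $\tilde u : \mathcal{R} \to \mathcal{E}$ with $\tilde u \circ \pi = u$, and by uniqueness in the adjoint isomorphism of Proposition $\ref{prop:adjoint}$ this $\tilde u$ is the adjoint of $\tilde u_0$. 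This proves $(\ref{property:mapufactors})$.

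For $(\ref{property:mapuequiv})$, equivariance reduces to checking $\tilde u \circ \trho_r = \tau_r \circ \tilde u$ for each generator $T_r$ of $\mathcal{H}(D)$, because $\tilde u$ is $A$-linear and both $\trho$ and $\tau$ are $A$-algebra homomorphisms. For $[q] = \pi(q) \in \mathcal{R}$, combining the commutativity of the diagram in $(\ref{property:mapufactors})$ with the intertwining identity from the first paragraph gives
\[ \tilde u(\trho_r([q])) = \tilde u(\pi(\rho_r(q))) = u(\rho_r(q)) = \tau_r(u(q)) = \tau_r(\tilde u([q])), \]
which is the desired $\mathcal{H}(D)$-equivariance of $\tilde u$.
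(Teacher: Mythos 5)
Your proposal is correct, but it takes a route that bypasses the paper's organization. The paper first proves Theorem~\ref{thm:eqmapmorita} for the matrix algebra $A^{V,V}$ (via Lemma~\ref{lemma:utaumorita}, which is the intertwining $U\circ\rho(h)=\Tau(h)\circ U$) and then obtains Theorem~\ref{thm:eqmap} by composing everything with the $A$-linear projection $\pi_2\colon A^{V,V}\to\mathcal{E}$, $e_{x,y}\mapsto y$, so that $u=\pi_2\circ U$, $\tilde u=\pi_2\circ\tilde U$, and $\pi_2\circ\Tau(h)=\tau(h)\circ\pi_2$. You instead work directly on $\mathcal{E}$: you establish $u\circ\rho_r=\tau_r\circ u$ (the $\mathcal{E}$-analogue of Lemma~\ref{lemma:utaumorita}), apply it to the braid relation at a vertex, split the resulting identity $\sum_i c_i\,u_0(G^{r,s,x,i})=0$ into its $A_0$-components via freeness of $A$ over $A_0$ (here using $\mathcal{E}=\bigoplus_i c_i\mathcal{E}_0$), and then propagate the vanishing from generators to all of $J_0$ by hand through an ad hoc right $A[\Gamma]$-module structure on $\mathcal{E}$. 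That propagation step is exactly what the Morita detour makes automatic: $U_0$ is an $A_0$-\emph{algebra} homomorphism, so $U_0(J_0)=0$ follows trivially from $U_0(G^{r,s,x,i})=0$, whereas $u_0$ is only a module map and you must re-derive multiplicativity through the right-action bookkeeping (which does check out, since $\mathcal{E}\cong A^{\{z\},V}$ as a right $A[\Gamma]$-module via $U$ for a fixed row $z$). The paper's route also earns the stronger Theorem~\ref{thm:eqmapmorita}, which is re-used in Corollaries~\ref{cor:nontrivialalgebra} and~\ref{cor:Uxsurjective}, so it is not wasted effort. One small inaccuracy: $A^{V,V}$ is a matrix algebra over $A$, not over $\mathcal{R}_0$. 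Part~(\ref{property:mapuequiv}) in your write-up is essentially identical to the paper's (both reduce to generators $T_r$ and chase the square); the only difference there is that the paper splits the big square into subdiagrams involving $A^{V,V}$.
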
 
The proof of the theorem will follow from Theorem \ref{thm:eqmapmorita}.
\subsection{Morita Equivalence}\label{subsec:morita}

In what follows, let $(D, A_0)$ be an extended Hecke datum, let $\Gamma$ be a $D$-graph with vertex set $V$. For any two sets $X,Y$ let $A^{X,Y}$ be the set of matrices with entries in $A$, indexed by $X\times Y$, such that all but finitely many entries are zero. For any $x\in X, y \in Y$, we denote by $e_{x,y}$ the matrix with $1_A$ in the $(x,y)$ entry and zero elsewhere. The set $\{e_{x,y}|x\in X, y\in Y\}$ forms a basis for $A^{X,Y}$ as an $A$-module. If $Y=\{y\}$ we denote $A^{X,Y}$ as $A^{X,\cdot}$, and $e_{x,y}$ as $e_x$. If $X=Y$ then $A^{X,X}$ is an $A$-algebra with enough orthogonal idempotents given by $\{e_{x,x}\}_{x\in X}$.

 Given a ring $R$ with enough orthogonal idempotents $\{e_x\}_{x\in I}$ we say an $R$-module $M$ is diagonalizable if  $M=\oplus_{x\in I} e_xM$. We claim the category of left $A$-modules (denoted $A$-Mod) is equivalent to the category of diagonalizable left $A^{V,V}$-modules (denoted $A^{V,V}$-Mod) for the (possibly infinite) set $V$. The proof of this fact follows from the proof of Theorem 3.54 in \cite{CurtisMorita}, which can be extended to the pair of rings $A, A^{V,V}$ using the fact that both are rings with enough orthogonal idempotents. The equivalence is given by the pair of functors $(F,G)$ where $F: A\text{-Mod}\rightarrow A^{V,V}\text{-Mod}$ such that $F(M)=A^{V,\cdot} \otimes_A M$, and $G:A^{V,V}\text{-Mod}\rightarrow A\text{-Mod}$ such that $G(N)=A^{\cdot,V}\otimes_{A^{V,V}} N$. Thus, the image of the free $A$-module (on the set $V$) $\mathcal{E}$ under the equivalence of categories is the $A^{V,V}$-module $A^{V,\cdot} \otimes_A \mathcal{E}$.
 
In Section \ref{subsec:IHAD-definition} we defined a representation $\tau$ of $\mathcal{H}(D)$ on $\mathcal{E}$. The associated Morita equivalent representation $\Tau$ of $\mathcal{H}(D)$ on $A^{V, \cdot} \otimes_A \mathcal{E}$ is given by 
 	\begin{align*}
		\Tau: \mathcal{H}(D) &\rightarrow End_{A^{V,V}}(A^{V, \cdot} \otimes_A \mathcal{E})\\		
			h&\mapsto \Tau(h):= \id_{A^{V, \cdot}} \otimes \tau(h) \qquad \text{ i.e., }\Tau(h)(e_x\otimes y) = e_x \otimes \tau(h)(y).
	\end{align*} 
 For any $r\in S$, we denote $\Tau_r$ the map $\id_{A^{V,\cdot}}\otimes \tau_r$. Since $\mathcal{E}$ is a free $A$-module on the set $V$, there is a canonical $A^{V,V}$-module isomorphism
	\begin{equation}\label{isomorita}
	 A^{V, \cdot} \otimes_A \mathcal{E} \rightarrow A^{V,V} \text{ with } e_x\otimes y \mapsto e_{x,y}.
 	\end{equation}
 Throughout this section we use $(\ref{isomorita})$ to identify $A^{V,V}$ with $A^{V,\cdot}\otimes_A\mathcal{E}$. We will later exploit the fact that $A^{V,V}$ has a natural $A$-algebra structure given by matrix multiplication, where as $A^{V, \cdot} \otimes_A \mathcal{E}$ is only considered as an $A^{V,V}$-module.
 
Define the map $U_0 : V^\Gamma \cup E^\Gamma \rightarrow A^{V,V}$ such that 
	\[x\in V^\Gamma \mapsto e_{x,x} \ \ \ \tand  \ \ \ (x\xleftarrow{e}y) \in E^\Gamma \mapsto \mu(e)e_{x,y}.\] 
Using the presentation of $A_0[\Gamma]$ in Remark \ref{rmk:altPA}, one can quickly verify that $U_0$ is an $A_0$-algebra homomorphism. 
  Similar to what we did in Section \ref{sec:QPA:subsec:Equivariantmap} , we identify $A^{V,V}$ with $\Hom_A(A,A^{V,V})$ and use Proposition \ref{prop:adjoint} to get an $A$-module homomorphism $U: A[\Gamma] \rightarrow A^{V,V}$ adjoint to $U_0$. It follows that $U(ap)=aU_0(p)=a\mu(p) e_{t(p), s(p)}$ for all $a\in A, p \in P$, thus $U$ is an $A$-algebra homomorphism. We are ready to state the main theorem of this section.
	\begin{theorem}\label{thm:eqmapmorita}
	Given $A^{V,V}, A_0[\Gamma]$ and $A[\Gamma]$ as above, the following statements hold.
	\begin{enumerate}[(I)]
		\item \label{property:Ufactor}  The map $U_0$ factors through $\mathcal{R}_0$ and and $U$ factors through $\mathcal{R}$, i.e., there exists an $A_0$-algebra (resp. A-algebra) homomorphism $\tilde{U}_0$ (resp. $\tilde{U}$) such that the following diagrams commute
		\begin{equation}\label{diag:uandtaumorita}
		\begin{tikzpicture}[description/.style={fill=white,inner sep=2pt},baseline=(current  bounding  box.center)]
			\matrix (m) [matrix of math nodes, row sep=2em, column sep=1.5em, text height=1.5ex, text depth=0.25ex]
			{ A_0[\Gamma] & & A^{V,V} \\
			& \mathcal{R}_0 & \\ };
			\path[->,font=\scriptsize]	
			(m-1-1) edge node[auto] {$ U_0$} (m-1-3)
				  edge node[auto,left] {$ \pi_0 $} (m-2-2)
			(m-2-2) edge node[auto,right] {$  \tilde{U}_0 $} (m-1-3);
		\end{tikzpicture}
		\begin{tikzpicture}[description/.style={fill=white,inner sep=2pt},baseline=(current  bounding  box.center)]
			\matrix (m) [matrix of math nodes, row sep=2em, column sep=1.5em, text height=1.5ex, text depth=0.25ex]
			{ A[\Gamma] & & A^{V,V} \\
			& \mathcal{R} & \\ };
			\path[->,font=\scriptsize]	
			(m-1-1) edge node[auto] {$ U$} (m-1-3)
				  edge node[auto,left] {$ \pi $} (m-2-2)
			(m-2-2) edge node[auto,right] {$  \tilde{U} $} (m-1-3);
		\end{tikzpicture}
		\end{equation}
where $\pi,\pi_0$ are the natural quotient maps. Moreover, regarding $A^{V,V}$ as $\Hom_A(A,A^{V,V})$, the map $\tilde{U}$ is the adjoint map of $\tilde{U}_0$ as in Proposition \ref{prop:adjoint}. \label{property:Uopi}
		\item \label{property:Uequiv}The map $\tilde{U}$ is an equivariant map (i.e, an $\mathcal{H}(D)$-module homomorphism) between $\mathcal{R}$ and $A^{V,V}$.
		\item  \label{property:Urestric} For any $x \in V$  the submodules $[x]\mathcal{R}$ and $A^{\{x\},V}$ (or $Ae_x \otimes_A \mathcal{E}$ under the identification in $(\ref{isomorita})$) are invariant submodules of  $\mathcal{R}$ and $A^{V,V}$ respectively. Moreover, the restriction of $\tilde{U}$ on $[x]\mathcal{R}$ (denoted $\tilde{U}_x$) is an  $\mathcal{H}(D)$-module homomorphism between $[x]\mathcal{R}$ and $A^{\{x\},V}$.  
		\end{enumerate}
	\end{theorem}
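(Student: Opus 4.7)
The proof rests on the single intertwining identity $U\circ \rho_r = \Tau_r\circ U$ as $A$-linear maps $A[\Gamma]\to A^{V,V}$; everything else will follow from this. I would first verify the identity on a vertex $y\in V$, where both sides unwind directly from Eq.~(\ref{eq:rhodeforig}) and the definition of $\tau_r$ (transported along the identification (\ref{isomorita})) to the same element of $A^{V,V}$: namely $a_r e_{y,y}+\sum_{e,z}\mu(e)e_{y,z}$ (the sum running over $e,z$ such that $y\xleftarrow{e}z$ and $r\in\mathcal{L}(z)$) when $r\notin\mathcal{L}(y)$, and $-b_r e_{y,y}$ when $r\in\mathcal{L}(y)$. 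For a general path $p$, the identity propagates from the vertex case: write $\rho_r(p)=p\cdot\rho_r(s(p))$ using that $\rho_r$ is a left $A[\Gamma]$-module endomorphism, then $U(\rho_r(p))=U(p)\cdot U(\rho_r(s(p)))=U(p)\cdot \Tau_r(e_{s(p),s(p)})$ by the ring-homomorphism property of $U$ together with the vertex case; on the other hand $U(p)\cdot\Tau_r(e_{s(p),s(p)})=\Tau_r(U(p)\cdot e_{s(p),s(p)})=\Tau_r(U(p))$ since $\Tau_r$ is a left $A^{V,V}$-module endomorphism and $U(p)\cdot e_{s(p),s(p)}=U(p)$. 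Extending by $A$-linearity and iterating yields $U\circ(\rho_{r_1}\cdots\rho_{r_n})=(\Tau_{r_1}\cdots\Tau_{r_n})\circ U$.

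For part (I), apply the intertwining to the braid-failure element $Z^{r,s,x}:=((\rho_r\rho_s\cdots)_{m_{r,s}}-(\rho_s\rho_r\cdots)_{m_{r,s}})(x)$: since $\Tau$ is Morita equivalent to $\tau$ and $\tau$ satisfies the braid relation by Definition~\ref{def:preDgraph}(\ref{WDHecke3}), the right-hand side vanishes on $e_{x,x}$, so $U(Z^{r,s,x})=0$. The principal obstacle is then to descend this to the vanishing of $U_0$ on each individual generator $Y_i^{r,s,x}=\sum_p X_{p,i}^{r,s,x}p$ of $J_0$, not merely on the $A$-combination $Z^{r,s,x}=\sum_i c_i Y_i^{r,s,x}$. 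I plan to handle this by mimicking the coefficient-extraction argument used in the proof of Theorem~\ref{thm:Rdual}(\ref{property:isoR}): expand the identity $U(Z^{r,s,x})=0$ in $A^{V,V}$ viewed as a free $A_0$-module with basis $\{c_i e_{y,z}\}_{i,y,z}$, and use that $X_{p,i}^{r,s,x}\in A_0$ together with the $A_0$-linear independence of $\{c_i\}$ to isolate each $c_i$-component and conclude $U_0(Y_i^{r,s,x})=0$. With $U_0(J_0)=0$ in hand, the factor $\tilde{U}_0:\mathcal{R}_0\to A^{V,V}$ is well defined; tensoring with $A$ via Proposition~\ref{prop:adjoint} together with flatness of $A$ over $A_0$ then produces the adjoint $A$-algebra homomorphism $\tilde{U}:\mathcal{R}\to A^{V,V}$, and both diagrams in~(\ref{diag:uandtaumorita}) commute by construction.

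Part (II) is then immediate: the quotient $\pi$ transports $U\circ\rho_r=\Tau_r\circ U$ to $\tilde{U}\circ\trho_r=\Tau_r\circ\tilde{U}$ on $\mathcal{R}$, and since $\mathcal{H}(D)$ is generated by $\{\T_r\}_{r\in S}$ this extends to $\tilde{U}\circ\trho(h)=\Tau(h)\circ\tilde{U}$ for every $h\in\mathcal{H}(D)$. For part (III), $[x]\mathcal{R}$ is $\trho$-stable because $\trho_r$ is a left $\mathcal{R}$-module endomorphism of $\mathcal{R}$, so $\trho_r([x]\cdot q)=[x]\cdot \trho_r(q)\in[x]\mathcal{R}$; analogously $A^{\{x\},V}=e_{x,x}A^{V,V}$ is $\Tau$-stable because $\Tau_r$ is a left $A^{V,V}$-module endomorphism. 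The restriction $\tilde{U}_x$ lands in $A^{\{x\},V}$ because $\tilde{U}$ is an algebra homomorphism sending $[x]$ to $e_{x,x}$, giving $\tilde{U}([x]\cdot q)=e_{x,x}\cdot\tilde{U}(q)\in A^{\{x\},V}$; its equivariance is inherited from~(II).
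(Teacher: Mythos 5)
Your proposal is correct and tracks the paper's own argument closely: the paper isolates the same intertwining identity as a separate lemma (Lemma \ref{lemma:utaumorita}, proved by an explicit case computation for $n=1$ and induction rather than your module-endomorphism propagation, but the content is identical), and then extracts $U_0(J_0)=0$ by the same $A_0$-freeness/coefficient-isolation step, with parts (II) and (III) handled just as you describe. The only cosmetic difference is presentational (the paper organizes (II) via a large commutative diagram), so this matches the paper's approach.
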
 
	We need the following lemma to prove the theorem.
	\begin{lemma}\label{lemma:utaumorita}
		Let $q \in A[\Gamma]$, $r_1, \dots, r_n \in S$  then
			\[ U\left( \rho_{r_1} \rho_{r_2} \cdots \rho_{r_n} (q) \right) = \mu(q)(e_{t(q)}\otimes\tau_{r_1}\tau_{r_2}\cdots\tau_{r_n}(s(q))).\]
		Moreover, for $h \in \mathcal{H}(D)$ we have $U\left( \rho(h) (q) \right) =\Tau (h)(U(q)).$
	\end{lemma}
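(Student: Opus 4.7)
The plan is to reduce the lemma to the single-step intertwining relation $U(\rho_r(q)) = \Tau_r(U(q))$ for all $q \in A[\Gamma]$ and $r \in S$, and then iterate. Two pieces of structure are fundamental: each $\rho_r$ is a left $A[\Gamma]$-module homomorphism (by Proposition \ref{prop:quotientidemp}(\ref{property:isoend})) and $U: A[\Gamma] \to A^{V,V}$ is an $A$-algebra homomorphism.

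For the single-step relation, by $A$-linearity in $q$ it suffices to treat $q = p$ a single path. Using $p = p \cdot s(p)$ in $A[\Gamma]$ and left $A[\Gamma]$-linearity of $\rho_r$, I get $\rho_r(p) = p \cdot \rho_r(s(p))$. Applying $U$ gives $U(\rho_r(p)) = U(p) \cdot U(\rho_r(s(p))) = \mu(p) e_{t(p), s(p)} \cdot U(\rho_r(s(p)))$. A direct case analysis on whether $r \in \mathcal{L}(s(p))$, comparing the formulas (\ref{eq:rhodeforig}) and (\ref{eq:deftaur}), yields $U(\rho_r(s(p))) = e_{s(p)} \otimes \tau_r(s(p))$ under the identification (\ref{isomorita}); the key observation is that $U$ turns an edge $(x \xleftarrow{e} y)$ into $\mu(e) e_{x, y}$, which matches the $\mu(e) y$ summand of $\tau_r(x)$. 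The matrix computation $e_{t(p), s(p)} \cdot (e_{s(p)} \otimes w) = e_{t(p)} \otimes w$ for any $w \in \mathcal{E}$ then gives $U(\rho_r(p)) = \mu(p)(e_{t(p)} \otimes \tau_r(s(p)))$. On the other hand, since $\Tau_r = \id_{A^{V, \cdot}} \otimes \tau_r$, I have $\Tau_r(U(p)) = \mu(p)\,\Tau_r(e_{t(p)} \otimes s(p)) = \mu(p)(e_{t(p)} \otimes \tau_r(s(p)))$, matching the previous expression and establishing $U \circ \rho_r = \Tau_r \circ U$ on $A[\Gamma]$.

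Iterating this single-step relation gives $U(\rho_{r_1}\cdots\rho_{r_n}(q)) = \Tau_{r_1}\cdots\Tau_{r_n}(U(q))$ for any $q \in A[\Gamma]$ and any sequence $r_1, \ldots, r_n \in S$. Specializing to $q = p$ a path and computing the right-hand side using $\Tau_r = \id \otimes \tau_r$ yields $\Tau_{r_1}\cdots\Tau_{r_n}(\mu(p)(e_{t(p)} \otimes s(p))) = \mu(p)(e_{t(p)} \otimes \tau_{r_1}\cdots\tau_{r_n}(s(p)))$, which is the first displayed identity. The ``moreover'' statement then follows by $A$-linearity in $h$ from the observation that $\mathcal{H}(D)$ is generated as an $A$-module by the elements $T_w = T_{r_1}\cdots T_{r_n}$ (Remark \ref{rmk:alternateH}); this formula also certifies that $U \circ \rho(h)$ is independent of the choice of lift of $h$, since $\Tau(h) \circ U$ manifestly is.

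The only nontrivial step is the single-step vertex computation, and within that the main bookkeeping is the identification $e_x \otimes y \leftrightarrow e_{x, y}$ together with the matrix product $e_{a, b} \cdot e_{b, c} = e_{a, c}$. Everything else is formal manipulation exploiting left $A[\Gamma]$-linearity of $\rho_r$ and the algebra-homomorphism property of $U$.
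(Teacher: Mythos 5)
Your proof is correct and follows essentially the same approach as the paper's: induction on $n$, with the base case $n=1$ handled by a direct case analysis comparing the formulas for $\rho_r$ and $\tau_r$ under the identification $e_x\otimes y\leftrightarrow e_{x,y}$, and the ``moreover'' statement obtained from the fact that $\mathcal{H}(D)$ is generated as an $A$-module by the $T_w$. The only cosmetic difference is that you factor $U(\rho_r(p))$ through $U(p)\cdot U(\rho_r(s(p)))$ using left $A[\Gamma]$-linearity of $\rho_r$ and the algebra-homomorphism property of $U$, thereby reducing the case analysis to vertices, whereas the paper expands $\rho_{r_1}(p)$ on the full path directly; your added remark that $U\circ\rho(h)$ is independent of the choice of lifts is a welcome clarification the paper leaves implicit.
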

	\begin{proof}
		In this proof we make use of the identification between $A^{V, V}$ and $A^{V, \cdot} \otimes_A \mathcal{E}$ in (\ref{isomorita}). We will prove this by induction on $n$. Let $n=1$ and assume $p \in P$, then  
	\small{	\begin{align*} 
		U(\rho_{r_1} (p)) 
				&= \begin{cases}
			a_{r_1} \mu(p)(e_{t(p)} \otimes s(p)) + \sum\limits_{\substack{y,e|r_1 \in \mathcal{L}(y)\\ (s(p) \xleftarrow{e} y) \in E}} \mu(p)\mu(e)(e_{t(p)}\otimes y) & \text{ if $r_1 \not\in \mathcal{L}(x)$}\\
				-b_{r_1} \mu(p)(e_{t(p)} \otimes s(p)) & \text{ if $r_1 \in \mathcal{L}(x)$}\end{cases}\\
				&=e_{t(p)} \otimes \mu(p)\tau_{r_1}(s(p))=(\id_{A^{V,\cdot}}\otimes \tau_{r_1})(e_{t(p)}\otimes \mu(p)s(p))=\Tau_{r_1}(U(p)).
	\end{align*}}
Now let $q\in A[\Gamma]$, then $q=\sum_{p}a_{p}p$ where $a_{p}\in A, p\in P$. By linearity, 			
		\[U\bigg(\rho_{r_1}\bigg(\sum_{p}a_{p}p\bigg)\bigg)=\sum_{p}U(\rho_{r_1}(a_{p}p))=\sum_{p}\Tau_{r_1}(U(a_{p}p))=\Tau_{r_1}\bigg(U\bigg( \sum_{p}a_{p}p\bigg)\bigg).\]
This completes the $n=1$ case. The general case follows by induction. The last statement follows from the fact that $\mathcal{H}(D)$ is freely generated (as an $A$-module) by $\{\T_{w}| w\in W\}$. 	
	\end{proof}
Before proceeding to the proof of the theorem, we remark that both Theorem \ref{thm:eqmapmorita} and Lemma \ref{lemma:utaumorita} are special cases of a more general result presented in Section \ref{sec:WGONCA} (Theorem \ref{thm:universality}).
\begin{proof}[Proof of Theorem \ref{thm:eqmapmorita}]
		To prove $(\ref{property:Uopi})$ we first show $U_0(J_0)=0$. Recall that $J_0$ is the two-sided ideal of $A_0[\Gamma]$ generated by elements of the form $\sum_{p} X_{p,i}^{r,s,x}p$ where  
	\begin{equation}\label{eq:pfthmU}
		(\rho_r \rho_s \rho_r \dots)_{m_{r,s}}(x)-(\rho_s \rho_r \rho_s \dots)_{m_{r,s}} (x) = \sum_{i,p} c_iX_{p,i}^{r,s,x}  p 
	\end{equation}
	and we do this for every pair $(r,s) \in S^2_{fin}$ and every $x \in V$.
	Let $LHS$ (resp. $RHS$) be the left hand side (resp. right hand side) of Eq. (\ref{eq:pfthmU}). By Lemma \ref{lemma:utaumorita} we get 
	\[U(LHS)= e_{x} \otimes (\tau_r \tau_s \tau_r \dots)_{m_{r,s}}(x)-(\tau_s \tau_r \tau_s \dots)_{m_{r,s}} (x)=0,\] therefore $U(RHS)=0$ or equivalently $\sum_{i,p}c_i U_0(X_{p,i}^{r,s,x} p)=0$. 
		Since $A^{V,V}$ is a free $A_0$-module (it is a free $A$-module and $A$ is free over $A_0$), then for any fixed $i \in I$ we get $U_0(\sum_{p} X_{p,i}^{r,s,x} p)=0$. Since $U_0$ is an $A_0$-algebra homomorphism, then $U_0(J_0)=0$ and we get the map $\tilde{U}_0$ as desired. 
	
	To prove $U(J)=0$, let $a \in A, j_0\in J_0$, so $aj_0 \in J=A\otimes_{A_0}J_0$, then $U(aj_0) = aU_0 (j_0)=0$ and thus we get the map $\tilde{U}$. To prove the last statement in $(\ref{property:Uopi})$ let $\hat{U}$ be the adjoint map to $\tilde{U}_0$ as in Proposition \ref{prop:adjoint}. We wish to show $\hat{U}=\tilde{U}$. By definition of the adjoint map, one can check $\hat{U}(a[p])=a\tilde{U}_0([p])$. By the left diagram in $(\ref{diag:uandtaumorita})$ (just proven) and the definition of $U$, we have $a\tilde{U}_0([p])=aU_0(p)= U(ap)$. By the right diagram in $(\ref{diag:uandtaumorita})$ we have $U(ap)=\tilde{U}(a[p])$. Thus $\hat{U}=\tilde{U}$.
		
	To prove $(\ref{property:Uequiv})$  let $h \in \mathcal{H}(D)$, then subdiagrams $2$ and $3$ in $(\ref{diag:UandTau})$ commute by Theorem \ref{thm:eqmapmorita}($\ref{property:Uopi}$) and Lemma \ref{lemma:utaumorita} respectively. Subdiagram 1 commutes by definitions of $\rho$ and $\trho$. Since $\pi$ is surjective, the full diagram commutes too. 
	 \begin{equation}\label{diag:UandTau}
		\begin{tikzpicture}[baseline=(current  bounding  box.center)]
			\matrix(a)[matrix of math nodes,
			row sep=2em, column sep=2em,
			text height=1.5ex, text depth=0.25ex]
			{\mathcal{R}&&&\mathcal{R}\\
			&A[\Gamma]&A[\Gamma]&\\
			A^{V,V}&&&A^{V,V}\\};
			\path[->](a-2-2) edge node[left]{$\pi$}(a-1-1);
			\path[->](a-1-1)	edge node[ right] {$ \quad2 $} (a-3-1);
			\path[->](a-2-2)	edge node[ right] {$ \qquad\qquad\;\; 3 $} (a-3-1);
			\path[->](a-1-4)	edge node[ left] {$ 2\quad $} (a-3-4);
			\path[->](a-2-2)	edge node[ right] {$ \qquad\qquad\;\; 1 $} (a-1-1);
			\path[->](a-2-3) edge node[right]{$\pi$}(a-1-4);
			\path[->](a-1-1) edge node[left]{$\tilde{U}$}(a-3-1);
			\path[->](a-1-4) edge node[right]{$\tilde{U}$}(a-3-4);
			\path[->](a-1-1) edge node[above]{$\trho(h)$}(a-1-4);
			\path[->](a-2-2) edge node[above]{$\rho(h)$}(a-2-3);
			\path[->](a-2-2) edge node[above]{$U$}(a-3-1);
			\path[->](a-2-3) edge node[above]{$U$}(a-3-4);
			\path[->](a-3-1) edge node[above]{$\mathcal{T}(h)$}(a-3-4);
		\end{tikzpicture}
	\end{equation}
	To prove $(\ref{property:Urestric})$, by definition of $\trho$ and $\Tau$ we have $\trho_{r}([x]\mathcal{R})\subseteq [x]\mathcal{R}$ and $\Tau_{r}(A^{\{x\},V})\subseteq A^{\{x\},V}$ for all $r\in S$. It then follows that both submodules are invariant under the action of $\mathcal{H}(D)$. The fact that $im(\tilde{U}_x) \subseteq A^{\{x\},V}$ follows from the definition of $\tilde{U}$. Since $\tilde{U}$ is an $\mathcal{H}(D)$-module homomorphism, so is any restriction of $\tilde{U}$ on an invariant submodule.
	\end{proof}
	As stated before, Theorem \ref{thm:eqmap} follows as a corollary to Theorem \ref{thm:eqmapmorita}.
\begin{proof}[Proof of Theorem \ref{thm:eqmap}]
To prove part $(\ref{property:mapufactors})$, let $\pi_2: A^{V\times V}\rightarrow \mathcal{E}$ be the $A$-module homomorphism such that $e_{x,y} \mapsto y$, then $\pi_2(U_0(p))=\mu(p)s(p)=u_0(p)$ and $\pi_2(\tilde{U}_0([p]))=\mu(p)s(p)=\tilde{u}_0([p])$. By Theorem \ref{thm:eqmapmorita}$(\ref{property:Ufactor})$ we have $\tilde{U}_0\circ\pi_0=U_0$, thus the left diagram in $(\ref{dig:proofeqmap})$ commutes. Similarly, $\pi_2\circ U=u, \pi_2\circ\tilde{U}=\tilde{u}$ and $\tilde{U}\circ\pi=U$, thus the right diagram in $(\ref{dig:proofeqmap})$ commutes. Let $\hat{u}$ be the map adjoint to $\tilde{u}_0$ as in Proposition \ref{prop:adjoint}. Similar to Theorem \ref{thm:eqmapmorita}$(\ref{property:Ufactor})$, one can check $\hat{u}(a[p])=a\tilde{u}_0([p])=\tilde{u}(a[p])$.
	\begin{equation}\label{dig:proofeqmap}
		\begin{tikzpicture}[description/.style={fill=white,inner sep=2pt},baseline=(current  bounding  box.center)]
			\matrix (m) [matrix of math nodes, row sep=2.5em, column sep=3em, text height=1.5ex, text depth=0.25ex]
			{ A_0[\Gamma] & & \mathcal{E} \\
			& A^{V,V} & \\ 
			& \mathcal{R}_0 & \\ };
			\path[->,font=\scriptsize]	
			(m-1-1) edge node[auto] {$ u_0$} (m-1-3)
				  edge node[auto,left] {$ \pi_0 $} (m-3-2)
				  edge node[auto,right] {$ U_0 $} (m-2-2)
			(m-2-2) edge node[auto,left] {$  \pi_2 $} (m-1-3)
			(m-3-2) edge node[auto,left] {$  \tilde{U}_0 $} (m-2-2)
			(m-3-2) edge node[auto,right] {$  \tilde{u}_0 $} (m-1-3);
		\end{tikzpicture}
		\begin{tikzpicture}[description/.style={fill=white,inner sep=2pt},baseline=(current  bounding  box.center)]
			\matrix (m) [matrix of math nodes, row sep=2.5em, column sep=3em, text height=1.5ex, text depth=0.25ex]
			{ A[\Gamma] & & \mathcal{E} \\
			& A^{V,V} & \\ 
			& \mathcal{R}& \\ };
			\path[->,font=\scriptsize]	
			(m-1-1) edge node[auto] {$ u$} (m-1-3)
				  edge node[auto,left] {$ \pi $} (m-3-2)
				  edge node[auto,right] {$ U $} (m-2-2)
			(m-2-2) edge node[auto,left] {$  \pi_2 $} (m-1-3)
			(m-3-2) edge node[auto,left] {$  \tilde{U} $} (m-2-2)
			(m-3-2) edge node[auto,right] {$  \tilde{u} $} (m-1-3);
		\end{tikzpicture}
	\end{equation}

To prove Theorem \ref{thm:eqmap}$(\ref{property:mapuequiv})$ notice that subdiagrams 2 in (\ref{diag:u}) commute as mentioned in the proof of Theorem \ref{thm:eqmap}$(\ref{property:mapufactors})$. Subdiagram 1 commutes by Theorem \ref{thm:eqmapmorita}$(\ref{property:Uequiv})$. To show that subdiagram 3 commutes recall that $\Tau = \id_{A^{V,\cdot}}\otimes \tau$ (identifying $A^{V,V}$ with $A^{V,\cdot}\otimes_A \mathcal{E}$). Thus when taking the projection onto the second component, we get $\pi_2\Tau=\tau$. It now follows that the full diagram in (\ref{diag:u}) commutes.
	\begin{equation}\label{diag:u}
		\begin{tikzpicture}[>=angle 90, baseline=(current  bounding  box.center)]
			\matrix(a)[matrix of math nodes,
			row sep=2em, column sep=2em,
			text height=1.5ex, text depth=0.25ex]
			{\mathcal{R}&&&\mathcal{R}\\
			&A^{V,V}&A^{V,V}\\
			\mathcal{E}&&&\mathcal{E}\\};
			\path[->](a-1-1) edge node[above]{$\trho(h)$}(a-1-4);
			\path[->](a-1-1) edge node[left]{$\tilde{u}$}(a-3-1);
			\path[->](a-1-1)	edge node[ right] {$ \quad2 $} (a-3-1);
			\path[->](a-2-2)	edge node[ right] {$ \qquad\qquad\;\; 3 $} (a-3-1);
			\path[->](a-1-4)	edge node[ left] {$ 2\quad $} (a-3-4);
			\path[->](a-1-1)	edge node[ right] {$ \qquad\qquad\;\; 1 $} (a-2-2);
			\path[->](a-2-2) edge node[above]{$\mathcal{T}(h)$}(a-2-3);
			\path[->](a-1-4) edge node[right]{$\tilde{u}$}(a-3-4);
			\path[->](a-1-4) edge node[left]{$\tilde{U}$} (a-2-3);
			\path[->](a-1-1) edge node[right]{$\tilde{U}$} (a-2-2);
			\path[->](a-2-2) edge node[right]{$\pi_2$} (a-3-1);
			\path[->](a-2-3) edge node[left]{$\pi_2$} (a-3-4);
			\path[->](a-3-1) edge node[above]{$\tau(h)$} (a-3-4);
		\end{tikzpicture}\vspace{-3mm}
	\end{equation}
	\end{proof}

	\begin{corollary}\label{cor:nontrivialalgebra}
If $\Gamma$ is a non-empty $D$-graph, then the elements in $\{[x]\}_{x\in V}$ are distinct non-zero orthogonal idempotents in $\mathcal{R}$. Moreover, for any $p \in P$ with $\mu(p)\neq 0$ we have $[p]\neq0 \in \mathcal{R}$.
	\end{corollary}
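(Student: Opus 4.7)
The plan is to read off the conclusion from the $A$-algebra homomorphism $\tilde{U}: \mathcal{R} \to A^{V,V}$ constructed in Theorem \ref{thm:eqmapmorita}(\ref{property:Ufactor}), since $\tilde{U}$ sends the elements we care about to transparently non-zero, distinct elements of the matrix algebra $A^{V,V}$.

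First I would note that the orthogonality and idempotency assertions are essentially formal and require no input from $\tilde{U}$: by Proposition \ref{prop:AGammaidemp}, the set $\{x\}_{x\in V}$ is a system of orthogonal idempotents in $A[\Gamma]$ (and the ambient algebra has enough such idempotents), and by Proposition \ref{prop:quotientidemp}(\ref{property:quotientidemp}) this property descends to the quotient $\mathcal{R}$, so the images $\{[x]\}_{x\in V}$ form a family of orthogonal idempotents in $\mathcal{R}$. The substantive content of the corollary is therefore that none of these images are zero and that distinct vertices yield distinct classes.

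For this I would chase the left commutative diagram in $(\ref{diag:uandtaumorita})$. By definition of $U_0$ we have $U_0(x) = e_{x,x}$, hence $\tilde{U}_0([x]) = e_{x,x}$, and consequently $\tilde{U}([x]) = e_{x,x} \in A^{V,V}$ as well. Because $A$ is non-zero (it is a unital ring containing $A_0$), the elements $\{e_{x,x}\}_{x \in V}$ of the matrix algebra are manifestly distinct and non-zero, as they are distinct basis vectors of the free $A$-module $A^{V,V}$. Since $\tilde{U}$ is an $A$-module (in fact $A$-algebra) homomorphism, any identification $[x]=[y]$ or vanishing $[x]=0$ would force the same relation on the images $e_{x,x}, e_{y,y}$, a contradiction. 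This handles the first assertion.

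For the last statement, again by the right diagram in $(\ref{diag:uandtaumorita})$ and the formula $U(ap) = a\mu(p) e_{t(p),s(p)}$ derived just before Theorem \ref{thm:eqmapmorita}, we have $\tilde{U}([p]) = U(p) = \mu(p)\, e_{t(p),s(p)}$. If $\mu(p) \neq 0$ then $\mu(p)\, e_{t(p),s(p)} \neq 0$ in the free $A$-module $A^{V,V}$, so $[p] \neq 0$ in $\mathcal{R}$. There is no real obstacle here; the only conceptual point is recognising that the construction of Theorem \ref{thm:eqmapmorita} already equips $\mathcal{R}$ with a faithfulness-detecting map into the matrix algebra, after which the verification collapses to the linear independence of the $e_{x,y}$.
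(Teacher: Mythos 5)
Your proof is correct and follows essentially the same approach as the paper: both rely on Theorem \ref{thm:eqmapmorita} and the observation that $\tilde{U}([x])=e_{x,x}$ and $\tilde{U}([p])=\mu(p)e_{t(p),s(p)}$ are non-zero (and distinct for distinct $x$) in the free $A$-module $A^{V,V}$. Your added remark that the orthogonality and idempotency are already handled by Propositions \ref{prop:AGammaidemp} and \ref{prop:quotientidemp} is a harmless elaboration of a point the paper takes as established.
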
					
	\begin{proof}
	By Theorem \ref{thm:eqmapmorita} we have $\tilde{U}([x])=e_{x,x}\neq 0$ for all $x \in V$, hence $[x] \neq 0 \in \mathcal{R}$. Similarly, for any $p\in P$ such that $\mu(p)\neq 0$ we have $\tilde{U}([p]) = \mu(p)e_{t(p),s(p)} \neq 0$, hence $[p] \neq 0$.
	\end{proof} 
We say that a $D$-graph $\Gamma$ is connected if $(V,E)$ is a strongly connected quiver and $\mu(e)\neq 0 \in A$ for all $e \in E$. In what follows, suppose $A_0\subseteq A$ is a domain with field of fractions $Q$. Given a $D$-graph $\Gamma$,  let $A_Q :=A\otimes_{A_0}Q$. Since $A$ is a free $A_0$-module with some basis $\{c_i\}_{i\in I}$ then $A\cong\bigoplus_{i \in I} A_0 \cdot c_i$ and $A_Q\cong \bigoplus_{i \in I} Q \cdot c_i $  is a free $Q$-module with the same basis. Let $D_Q=(W,S,A_Q,(a_r)_{r\in S}, (b_r)_{r\in S})$, then $(D_Q,Q)$ is an extended Hecke datum and $\Gamma$ is a $D_Q$-graph.
	\begin{corollary}\label{cor:Uxsurjective}
	Let $\Gamma$ be a $D_Q$-graph with $\mu(e)\in A_0\subseteq A$ for all $e\in E$.
	\begin{enumerate}[(I)]
		\item \label{property:tauandux}If $\Gamma$ is  connected or if $\mathcal{E}$ is irreducible under the representation $\tau$, then $\tilde{U}_x$ is surjective for all $x \in V$ and  the representation $\tau$ on $\mathcal{E}$ is isomorphic to the restriction of $\trho$ on $[x]\mathcal{R}/ ker(\tilde{U}_x)$.
			\item \label{property:wgraph} Let $\Gamma$ be the $D_Q$-graph associated to a $W$-cell (i.e., strongly connected component of a $W$-graph, see \cite{JS}) then for each $x \in V$ the representation of $\mathcal{H}(D_Q)$ on $[x]\mathcal{R}/ ker(\tilde{U}_x)$ is a copy of the $\tau$ representation.
	\end{enumerate}
		\end{corollary}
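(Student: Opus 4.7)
The plan is to combine the equivariance of $\tilde{U}_x$ from Theorem \ref{thm:eqmapmorita} with a Morita-style identification of $A_Q^{\{x\},V}$ with $\mathcal{E}$, and then verify surjectivity of $\tilde{U}_x$ in each of the two cases. Under the canonical isomorphism (\ref{isomorita}), the $\mathcal{H}(D_Q)$-invariant submodule $A_Q^{\{x\},V}\subseteq A_Q^{V,V}$ corresponds to $A_Q e_x\otimes_{A_Q}\mathcal{E}$; since $\Tau_r=\id_{A_Q^{V,\cdot}}\otimes\tau_r$, the assignment $y\mapsto e_x\otimes y$ is an $\mathcal{H}(D_Q)$-module isomorphism between $(\mathcal{E},\tau)$ and $(A_Q^{\{x\},V},\Tau|_{A_Q^{\{x\},V}})$. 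Once $\tilde{U}_x$ is shown to be surjective, the first isomorphism theorem yields $[x]\mathcal{R}/\ker(\tilde{U}_x)\cong A_Q^{\{x\},V}\cong\mathcal{E}$ as $\mathcal{H}(D_Q)$-modules, settling the isomorphism claim in \textit{(I)}.

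For surjectivity, first note that $\tilde{U}_x([x])=e_{x,x}\neq 0$, so $\mathrm{im}(\tilde{U}_x)$ is a nonzero $\mathcal{H}(D_Q)$-submodule of $A_Q^{\{x\},V}$. If $\mathcal{E}$ is irreducible under $\tau$, then $A_Q^{\{x\},V}$ is irreducible under $\Tau$ by the isomorphism above, and hence $\mathrm{im}(\tilde{U}_x)=A_Q^{\{x\},V}$ at once. If instead $\Gamma$ is connected, then for each $y\in V$ there exists a directed path $p=(x\xleftarrow{e_1}\cdots\xleftarrow{e_n} y)\in P^\Gamma$, and $\mu(p)=\mu(e_1)\cdots\mu(e_n)\in A_0\setminus\{0\}$ since $A_0$ is a domain and each $\mu(e_i)\in A_0\setminus\{0\}$ by hypothesis. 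As every nonzero element of $A_0$ is invertible in $Q\subseteq A_Q$, and $\tilde{U}_x$ is $A_Q$-linear, one obtains $e_{x,y}=\mu(p)^{-1}\tilde{U}_x([p])\in\mathrm{im}(\tilde{U}_x)$; ranging over $y\in V$ gives surjectivity onto $A_Q^{\{x\},V}$.

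Part \textit{(II)} will then follow as an immediate application of \textit{(I)}: a $W$-cell is by definition a strongly connected component of a $W$-graph, and the associated $D_Q$-graph carries $\mu$-labels in $\mathbb{Z}\setminus\{0\}\subseteq A_0\setminus\{0\}$ by Remark \ref{rmk:KL}, so it is connected in our sense. Part \textit{(I)} then produces a copy of the $\tau$ representation inside $[x]\mathcal{R}/\ker(\tilde{U}_x)$ for every vertex $x$ of the cell.

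I expect the main obstacle to lie in the rescaling step in the connected case: $\mathrm{im}(\tilde{U}_x)$ is a priori generated by elements of the form $\mu(p)e_{x,y}$, which is only an $A_0$-combination of the $e_{x,y}$'s. Surjectivity therefore requires inverting each $\mu(p)$, which is precisely why the hypotheses restrict $\mu$ to the domain $A_0$ and pass to the datum $(D_Q,Q)$ so that the chain $A_0\hookrightarrow Q\hookrightarrow A_Q$ makes every nonzero $\mu(p)$ a unit.
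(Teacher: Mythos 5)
Your proof is correct and follows essentially the same route as the paper: reduce to surjectivity of $\tilde{U}_x$ via the Morita identification $A_Q^{\{x\},V}\cong\mathcal{E}$ and the first isomorphism theorem, then obtain surjectivity in the connected case by rescaling $\tilde{U}_x([p])=\mu(p)e_{x,y}$ by the unit $\mu(p)^{-1}\in Q\subseteq A_Q$, and in the irreducible case by observing that the image is a nonzero invariant submodule. The only minor imprecision is attributing $\mu(e)\neq 0$ to the stated hypothesis of the corollary; it in fact comes from the definition of \emph{connected}, but this does not affect the argument.
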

		\begin{proof}
By Theorem \ref{thm:eqmapmorita}$(\ref{property:Urestric})$ we have that $\tilde{U}_x$ is an (injective) equivariant map between $[x]\mathcal{R}/ ker(\tilde{U}_x)$ and $A^{\{x\},V}$, with respect to the representations $\trho$ and $\Tau$ respectively. Since $\Tau$ restricted to $A^{\{x\},V}$ is isomorphic to $\tau$, then to prove $(\ref{property:tauandux})$ it suffices to show that $\tilde{U}_x$ is surjective.

If $\Gamma$ is connected then for any $x,y \in V$ there is a path $p \in P$ with $t(p)=x, s(p)=y$ and $\mu(p)\neq 0$. Thus, $\tilde{U}_x(p/\mu(p))= \mu(p)e_{x,y}/\mu(p) = e_{x,y}$, therefore $\tilde{U}_x$ is surjective.  If $\mathcal{E}$ is irreducible (under $\tau$) then $A^{\{x\},V}$ is irreducible (under $\Tau$). Since $im(\tilde{U}_x)$ is a non-trivial invariant submodule of $A^{\{x\},V}$ then $im(\tilde{U}_x)= A^{\{x\},V}$, thus $\tilde{U}_x$ is surjective. Statement $(\ref{property:wgraph})$ is a special case of $(\ref{property:tauandux})$.
		\end{proof}
\section{Computing the generators of $J_0$}\label{universalPDG}
In this section we use a collection of graphs (called universal pre-$D$-graphs) to help compute the generators of the ideal $J_0^{\Lambda}$ for any finite pre-$D$-graph $\Lambda$ (i.e., with finitely many vertices and edges). The main idea is to compute the generators for the universal pre-$D$-graphs and the images of these generators (under a suitable map) will form a generating set for $J_0^\Lambda$. We show several explicit computations for the extended Hecke datum $(D_{\mathbb{Z}},\mathbb{Z})$.

\subsection{Universal pre-$D$-graph}\label{subsec:universal}
Let $(D,A_0)$ be an extended Hecke datum, $(r,s) \in S^2_{fin}$, and let $\Gamma^{\mathcal{U}}$ be the complete quiver on $V^{\Gamma^{\mathcal{U}}}:=\{x_r, x_s,x_\varnothing,x_\wp\}$. We now define a map $\mathcal{L}^{\Gamma^{\mathcal{U}}}$ that makes $\Gamma^{\mathcal{U}}$ a pre-$D$-graph. Let $\mathcal{L}^{\Gamma^{\mathcal{U}}}$ be such that $\mathcal{L}^{\Gamma^{\mathcal{U}}}(x_r) := \{r\},\mathcal{L}^{\Gamma^{\mathcal{U}}}(x_s) := \{s\},\mathcal{L}^{\Gamma^{\mathcal{U}}}(x_{\varnothing}) := \emptyset$ and $\mathcal{L}^{\Gamma^{\mathcal{U}}}(x_\wp) := \{r,s\}$  (see Figure \ref{fig:univPDG}). We say $\Gamma^\mathcal{U}$ is the \textbf{universal pre-$D$-graph with respect to $r,s$}. 
	\begin{figure}
		\begin{center}
			\begin{tikzpicture}[->,node distance=3cm,scale=0.6, every node/.style={scale=0.6}]
 				 \node[draw,circle]  (1) [label=above left:\LARGE{$x_\wp$}]{$\{ r,s \}$};
				  \node[draw,circle]  (2) [below left of=1,label=below left:\LARGE{$x_r$}] {$\{r\}$};
				  \node[draw,circle]  (3) [below right of=1,label=below right:\LARGE{$x_s$}] {$\{s\}$};
 				 \node [draw,circle] (4) [below right of=2,label=below left:\LARGE{$x_{\varnothing}$}] {$\emptyset $};
				  \path[every node/.style={font=\sffamily\small}]
				    (1) edge [bend left] node {}(3)
				        	edge [bend left]node {} (4)
				 	edge [bend right] node {} (2)
					edge [loop above] node {} (1)
				(2) edge node {}(1)
			        		edge [bend right]node {} (4) edge  node {} (3)
					edge [loop left] node {} (2)
				(3) edge [bend left] node {}(2)
				        edge [bend left]node {} (4)
 					edge node {} (1)
					edge [loop right] node {} (1)
				(4) edge node {}(3)
   				     	edge node {} (1)
 					edge node {} (2)
					edge [loop below] node {} (4);
			\end{tikzpicture}
		\end{center}
	\caption{Universal pre-$D$-graph $\Gamma^{\mathcal{U}}$ with respect to $r,s$.}\label{fig:univPDG}
	\end{figure}
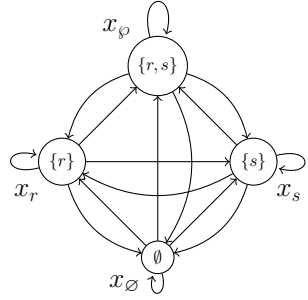

Let $\Lambda$ be any pre-$D$-graph with finite vertex set and finitely many edges, $r, s \in S$, $x\in V^{\Gamma^{\mathcal{U}}}$ and $V^{\Lambda}_x := \{y \in V^{\Lambda} \,|\, \mathcal{L}^{\Lambda}(y) \cap \{r,s\} = \mathcal{L}^{\Gamma^{\mathcal{U}}}(x)\} \subseteq V^{\Lambda}$. Let $\psi_{r,s}$ be the map
\begin{align*}
\psi_{r,s}:  V^{\Gamma^\mathcal{U}}\cup E^{\Gamma^\mathcal{U}} &\longrightarrow A[\Lambda] \\
 x &\mapsto \quad\sum y \quad\text{ where } y \in V^{\Lambda}_x\\
x_1 \xleftarrow{e} x_2 &\mapsto \quad \sum f \quad \text{ where } y_1\xleftarrow{f} y_2 \in E^{\Lambda},\; y_1 \in V^{\Lambda}_{x_1}, \; y_2 \in V^{\Lambda}_{x_2}
\end{align*}
for $x \in V^{\Gamma^{\mathcal{U}}}, x_1 \xleftarrow{e} x_2 \in E^{\Gamma^{\mathcal{U}}}$.  Using the description of $A[\Gamma^{\mathcal{U}}]$ in Remark \ref{rmk:altPA} we can show $\psi_{r,s}$ is an $A$-algebra homomorphism. 

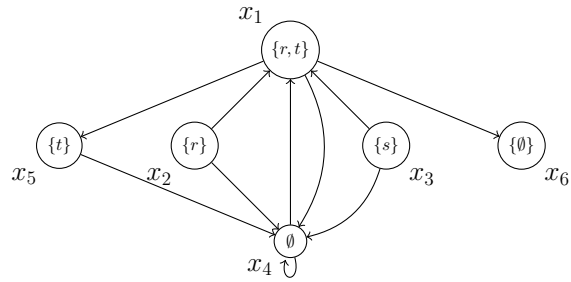
\begin{figure}
		\begin{center}
			\begin{tikzpicture}[->,node distance=3cm,scale=0.6, every node/.style={scale=0.6}]
 				 \node[draw,circle]  (1) [label=above left:\LARGE{$x_1$}]{$\{ r,t \}$};
				  \node[draw,circle]  (2) [below left of=1,label=below left:\LARGE{$x_2$}] {$\{r\}$};
				  \node[draw,circle]  (3) [below right of=1,label=below right:\LARGE{$x_3$}] {$\{s\}$};
 				 \node [draw,circle] (4) [below right of=2,label=below left:\LARGE{$x_4$}] {$\emptyset $};
				 \node [draw,circle](5) [left of=2,label=below left:\LARGE{$x_5$}]{$\{t\}$};
				  \node [draw,circle](6) [right of=3,label=below right:\LARGE{$x_6$}]{$\{\emptyset\}$};
				  \path[every node/.style={font=\sffamily\small}]
				    (1) edge [bend left] node {}(4)
					edge node {} (5)
					edge node {} (6)
				(2) edge node {}(1)
			        		edge node {} (4) 
				(3) edge [bend left] node {}(4)
 					edge node {} (1)
				(4) edge node {}(1)
					edge [loop below] node {} (4)
				(5) edge node {}(4);
			\end{tikzpicture}
		\end{center}
	\caption{An example of a finite pre-$D$-graph $\Lambda$}\label{fig:PDG}
	\end{figure}

\begin{example} Let $\Lambda$ be the pre-$D$-graph in Figure \ref{fig:PDG} (where the underlying Coxeter system of $D$ has at least three generators $r,s$ and $t$). The following tables show the image of $\psi_{r,s}$ for all vertices of $\Gamma^{\mathcal{U}}$ and some of its edges. 
\begin{center}
  \begin{tabular}{@{} |c|c| @{}}\hline
    vertex& $\psi_{r,s}(\text{vertex})$   \\ \hline
    $x_r$& $x_1+x_2$    \\ 
    $x_s$ & $x_3$\\ 
    $x_\wp$& 0  \\ 
    $x_\varnothing$ & $x_4+x_5+x_6$ \\ \hline
   \end{tabular}\qquad
      \begin{tabular}{@{} |c|c| @{}}\hline
    edge& $\psi_{r,s}(\text{edge})$  \\ \hline
    $x_\varnothing \leftarrow x_r$& $(x_5 \leftarrow x_1)+(x_6 \leftarrow x_1)+(x_4 \leftarrow x_2)$ \\ 
    $x_r \leftarrow x_s$ & $x_1 \leftarrow x_3$ \\ 
    $x_r\leftarrow x_\varnothing $& $x_1\leftarrow x_4$  \\ 
    $x_s\leftarrow x_\varnothing $ & 0  \\ 
 \hline
   \end{tabular}
  \label{tab:label}
\end{center}
\end{example}
The next lemma is key in the proof of Theorem \ref{thm:psimap}.
\begin{lemma}\label{lemma:rhopsi}
Let $(r,s) \in S^2_{fin}$, and $q \in A[\Gamma^\mathcal{U}]$ then \small{$\rho^{\Lambda}_z(\psi_{r,s}(q))=\psi_{r,s}(\rho_z^{\Gamma^\mathcal{U}} (q))$} for $z \in \{r,s\}$.
		\end{lemma}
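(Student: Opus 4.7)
The plan is a two-stage reduction followed by a direct case analysis. Both sides of the claimed identity are $A$-linear in $q$: the map $\psi_{r,s}$ is an $A$-algebra homomorphism, and $\rho_z^{\Gamma^{\mathcal{U}}}$, $\rho_z^{\Lambda}$ are $A$-module endomorphisms. Hence it suffices to check the identity when $q \in P^{\Gamma^{\mathcal{U}}}$ is a single path.

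Next, I would reduce to the case where $q$ is a vertex. For a path $q \in P^{\Gamma^{\mathcal{U}}}$ with source $x = s(q)$ we have the identity $q = q \cdot x$ in $A[\Gamma^{\mathcal{U}}]$. Using that $\rho_z^{\Gamma^{\mathcal{U}}} \in End_{A[\Gamma^{\mathcal{U}}]}(A[\Gamma^{\mathcal{U}}])$ and that $\psi_{r,s}$ is an algebra homomorphism,
\[
\psi_{r,s}(\rho_z^{\Gamma^{\mathcal{U}}}(q)) = \psi_{r,s}(q\cdot \rho_z^{\Gamma^{\mathcal{U}}}(x)) = \psi_{r,s}(q)\cdot \psi_{r,s}(\rho_z^{\Gamma^{\mathcal{U}}}(x)).
\]
Similarly, $\psi_{r,s}(q) = \psi_{r,s}(q)\cdot \psi_{r,s}(x)$ and since $\rho_z^{\Lambda}$ is left $A[\Lambda]$-linear,
\[
\rho_z^{\Lambda}(\psi_{r,s}(q)) = \psi_{r,s}(q)\cdot \rho_z^{\Lambda}(\psi_{r,s}(x)).
\]
Comparing these, the lemma reduces to showing $\rho_z^{\Lambda}(\psi_{r,s}(x)) = \psi_{r,s}(\rho_z^{\Gamma^{\mathcal{U}}}(x))$ for every vertex $x \in V^{\Gamma^{\mathcal{U}}}$.

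For the vertex case I would split into $z \in \mathcal{L}^{\Gamma^{\mathcal{U}}}(x)$ and $z \notin \mathcal{L}^{\Gamma^{\mathcal{U}}}(x)$. In the former, every $y \in V^{\Lambda}_x$ satisfies $z \in \mathcal{L}^{\Gamma^{\mathcal{U}}}(x) \subseteq \mathcal{L}^{\Lambda}(y)$, so by $(\ref{eq:rhodeforig})$ both sides reduce to $-b_z\,\psi_{r,s}(x)$. In the latter case, every $y \in V^{\Lambda}_x$ satisfies $z \notin \mathcal{L}^{\Lambda}(y)$, and expanding each $\rho_z^{\Lambda}(y)$ by $(\ref{eq:rhodeforig})$ yields
\[
\rho_z^{\Lambda}(\psi_{r,s}(x)) \;=\; a_z\,\psi_{r,s}(x) \;+\!\! \sum_{\substack{(y\xleftarrow{\,f\,}y')\in E^{\Lambda} \\ y\in V^{\Lambda}_x,\; z\in \mathcal{L}^{\Lambda}(y')}}\!\! f.
\]
On the other hand, since $\Gamma^{\mathcal{U}}$ is complete, $\rho_z^{\Gamma^{\mathcal{U}}}(x) = a_z x + \sum_{x'\colon z\in \mathcal{L}^{\Gamma^{\mathcal{U}}}(x')}(x\xleftarrow{e_{x,x'}}x')$, and applying $\psi_{r,s}$ gives a double sum over $x'$ with $z\in \mathcal{L}^{\Gamma^{\mathcal{U}}}(x')$ and over edges $y \xleftarrow{f} y'$ with $y\in V^{\Lambda}_x$, $y' \in V^{\Lambda}_{x'}$.

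The two sums agree by the following combinatorial observation, which I take to be the main (if modest) content of the proof: the sets $\{V^{\Lambda}_{x'}\}_{x' \in V^{\Gamma^{\mathcal{U}}}}$ partition $V^{\Lambda}$ according to the value of $\mathcal{L}^{\Lambda}(y')\cap\{r,s\}$, so the condition ``$y'\in V^{\Lambda}_{x'}$ for some $x'$ with $z \in \mathcal{L}^{\Gamma^{\mathcal{U}}}(x')$'' is equivalent to ``$z \in \mathcal{L}^{\Lambda}(y')$''. This identifies the two families of edges being summed and finishes the proof. No obstacle beyond bookkeeping is anticipated, since the universal graph was designed precisely so that its four $\{r,s\}$-labels index the fibers of $\mathcal{L}^{\Lambda}(\cdot)\cap\{r,s\}$.
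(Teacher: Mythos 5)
Your proof is correct and follows essentially the same route as the paper's: reduce to vertices via $A$-linearity, the fact that $\psi_{r,s}$ is an $A$-algebra homomorphism, and left $A[\Gamma^{\mathcal{U}}]$-linearity (resp. left $A[\Lambda]$-linearity) of $\rho_z$, then verify the vertex case by direct computation, with the key point being that the fibers $V^{\Lambda}_{x'}$ partition $V^{\Lambda}$ by the value of $\mathcal{L}^{\Lambda}(\cdot)\cap\{r,s\}$. The paper merely presents the same steps in the opposite order (vertex case first, then paths), and does not spell out the $z\in\mathcal{L}^{\Gamma^{\mathcal{U}}}(x)$ versus $z\notin\mathcal{L}^{\Gamma^{\mathcal{U}}}(x)$ dichotomy as explicitly as you do.
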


\begin{proof}
We show the result holds for $z=r$ and the case $z=s$ follows by symmetry. For $x \in V^{\Gamma}$, 
	\begin{align*}
		\psi_{r,s}(\rho_r^{\Gamma^\mathcal{U}}(x))&=\begin{cases}
				(a_r) \psi_{r,s}(x) + \psi_{r,s} (x \leftarrow x_r +x \leftarrow x_\wp)  & \text{ if $r \not\in \mathcal{L}^{\Gamma^\mathcal{U}}(x)$}\\
				(-b_r) \psi_{r,s}(x) & \text{ if $r \in \mathcal{L}^{\Gamma^\mathcal{U}}(x),$}\end{cases}\\
			&=\begin{cases}
				(a_r) \sum\limits_{y\in V^{\Lambda}_x} y + \sum\limits_{y\in V^{\Lambda}_x} y \sum\limits_{\substack{z, e | r \in \mathcal{L}^\Lambda (z) \\ (y\xleftarrow{e}z) \in E^\Lambda}} y \xleftarrow{e} z  & \text{ if $r \not\in \mathcal{L}^{\Gamma^\mathcal{U}}(x)$}\\
				(-b_r)  \sum\limits_{y\in V^{\Lambda}_x} y & \text{ if $r \in \mathcal{L}^{\Gamma^\mathcal{U}}(x),$}\end{cases}\\
		&= \rho_r^{\Lambda}(\psi_{r,s}(x)).
	\end{align*}
Since $\psi_{r,s}$ is an $A$-algebra homomorphism, then for $q \in P^{\Gamma^\mathcal{U}}$, 
	\begin{align*}
		\psi_{r,s}(\rho_r^{\Gamma^\mathcal{U}}(q))
				&=\psi_{r,s}(q)\psi_{r,s}(\rho_r^{\Gamma^\mathcal{U}}(s(q)))=\psi_{r,s}(q) \rho_r^{\Lambda}(\psi_{r,s}(s(q)))
				= \rho_r^{\Lambda}(\psi_{r,s}(q)).
	\end{align*}
	Extend linearly to get the desired result.
\end{proof}

Let $\{\sum_{p} X_{p,i}^{r,s,x}p \,|\, x \in V^{{\Gamma^\mathcal{U}}}, i \in I, (r,s) \in S^2_{fin}\}$ be the generating set for $J_0^{\Gamma^{\mathcal{U}}}$ as in Eq. $(\ref{eq:relations})$. The next theorem states that computing the generators of $J_0^{\Lambda}$ for any pre-$D$-graph $\Lambda$ with finitely many vertices and finitely many edges can be done by computing the generators of $J_0^{\Gamma^{\mathcal{U}}}$ and then applying $\psi_{r,s}$. 
	\begin{theorem}\label{thm:psimap}
		For any $y \in V^{\Lambda}$, $(r, s) \in S^2_{fin}$ we get 
			\[ (\rho_r^{\Lambda} \rho^{\Lambda}_s \rho^\Lambda_r \cdots)_{m_{r,s}}(y) -(\rho^\Lambda_s \rho^\Lambda_r \rho^\Lambda_s \cdots)_{m_{r,s}} (y)=c_iy \psi_{r,s}\bigg(\sum_{i,p} X^{r,s,x_y}_{p,i} p\bigg),\]
where $x_y$ is the vertex in $V^{\Gamma^\mathcal{U}}$ with $\mathcal{L}^{\Gamma^\mathcal{U}} (x_y)=\mathcal{L}^\Lambda (y) \cap \{r,s\}$. Moreover, a generating set for $J_0^{\Lambda}$ is $\{y\sum_p X^{r,s,x_y}_{p,i}\psi_{r,s}(p)\,|\, (r,s)\in S^2_{fin}, i\in I, y \in V^\Lambda\}$.
	\end{theorem}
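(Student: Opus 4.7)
The plan is to reduce each braid-type computation on $\Lambda$ to the analogous one on the universal pre-$D$-graph $\Gamma^{\mathcal{U}}$, using Lemma \ref{lemma:rhopsi} together with the fact that each $\rho_z^\Lambda$ is a left $A[\Lambda]$-module endomorphism. First I would iterate Lemma \ref{lemma:rhopsi}: for any finite string $z_1,\dots,z_k \in \{r,s\}$ and any $q \in A[\Gamma^{\mathcal{U}}]$,
\[ \rho^\Lambda_{z_1}\cdots\rho^\Lambda_{z_k}(\psi_{r,s}(q)) \;=\; \psi_{r,s}\bigl(\rho^{\Gamma^{\mathcal{U}}}_{z_1}\cdots\rho^{\Gamma^{\mathcal{U}}}_{z_k}(q)\bigr), \]
a straightforward induction from the $k=1$ case supplied by the lemma.

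Next, given $y \in V^\Lambda$ and the associated $x_y \in V^{\Gamma^{\mathcal{U}}}$, I would establish the key identity $y\cdot \psi_{r,s}(x_y) = y$ in $A[\Lambda]$. This holds because $\psi_{r,s}(x_y)$ is by definition the sum of those vertices $z \in V^\Lambda$ for which $\mathcal{L}^\Lambda(z)\cap\{r,s\} = \mathcal{L}^{\Gamma^{\mathcal{U}}}(x_y)$, the vertex $y$ is one of these by the very choice of $x_y$, and the path-algebra relations from Remark \ref{rmk:altPA} give $y\cdot z = \delta_{y,z}\,y$ for paths of length zero. Combining this with the left-module property of $\rho^\Lambda_z$ then yields
\[ (\rho_r^\Lambda \rho_s^\Lambda \cdots)_{m_{r,s}}(y) \;=\; y\cdot (\rho_r^\Lambda \rho_s^\Lambda \cdots)_{m_{r,s}}(\psi_{r,s}(x_y)), \]
and analogously for the product starting with $s$. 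Applying the first step with $q=x_y$, subtracting the two resulting expressions, and invoking the definition of the $X^{r,s,x_y}_{p,i}$ from Eq.~(\ref{eq:relations}) gives the first assertion of the theorem, after pulling the scalars $c_i X^{r,s,x_y}_{p,i} \in A$ through the $A$-algebra homomorphism $\psi_{r,s}$.

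For the generating-set statement I would compare $A_0$-coefficients. The identity just proved can be rewritten as
\[ \sum_{i,p} c_i X^{r,s,y}_{p,i}\,p \;=\; \sum_{i,p} c_i\,y\,X^{r,s,x_y}_{p,i}\,\psi_{r,s}(p), \]
both sides living in $A[\Lambda] = A\otimes_{A_0} A_0[\Lambda]$. Since $\{c_i\}_{i\in I}$ is an $A_0$-basis of $A$, uniqueness of the expansion forces
\[ \sum_p X^{r,s,y}_{p,i}\,p \;=\; y\sum_p X^{r,s,x_y}_{p,i}\,\psi_{r,s}(p) \qquad\text{in } A_0[\Lambda] \]
for each $i$. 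By Definition \ref{def:J0}, the left-hand sides (as $(r,s)$, $y$, $i$ vary) generate $J_0^\Lambda$, hence so do the right-hand sides, which is exactly the claimed generating set.

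The argument is almost entirely formal once Lemma \ref{lemma:rhopsi} is available; the only point requiring care is keeping scalars in $A$ versus $A_0$ separate so that the freeness of $A$ over $A_0$ can be invoked cleanly at the final extraction-of-coefficients step, and verifying that the multiplication by $y$ really does commute past the alternating product via the left $A[\Lambda]$-module structure rather than any other module structure on $A[\Lambda]$.
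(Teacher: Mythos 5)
Your proof is correct and follows essentially the same route as the paper's: the identity $y=y\,\psi_{r,s}(x_y)$ in $A[\Lambda]$, the left $A[\Lambda]$-module property of the $\rho_z^{\Lambda}$, and iterated application of Lemma~\ref{lemma:rhopsi} give the displayed formula, after which the generating-set statement follows by extracting $c_i$-coefficients using freeness of $A$ over $A_0$. The paper compresses these observations into a short chain of equalities and a one-line remark, but the underlying argument is identical; you have merely supplied the details it leaves implicit.
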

	\begin{proof}
		Let $m:=m_{r,s}$. Using the fact that $y=y\psi_{r,s}(x_y)\in A[\Lambda]$ and Lemma \ref{lemma:rhopsi} one can show 
\small{\[(\rho_r^{\Lambda} \rho^{\Lambda}_s \rho^\Lambda_r \cdots)_{m} (y) = y(\rho_r^{\Lambda} \rho^{\Lambda}_s \rho^\Lambda_r \cdots)_{m} (\psi_{r,s}(x_y))=y\psi_{r,s}((\rho_r^{\Gamma^\mathcal{U}} \rho_s^{\Gamma^\mathcal{U}} \rho^{\Gamma^\mathcal{U}}_r \cdots)_{m} (x_y)).\]}
We now compute the braid relations on $y$ to get 
	\begin{align*}
		(&\rho_r^{\Lambda} \rho^{\Lambda}_s \rho^\Lambda_r \cdots)_{m}(y) -(\rho^\Lambda_s \rho^\Lambda_r \rho^\Lambda_s \cdots)_{m} (y)\\
		&=(\rho_r^{\Lambda} \rho^{\Lambda}_s \rho^\Lambda_r \cdots)_{m}(y\psi_{r,s}(x_y)) -(\rho^\Lambda_s \rho^\Lambda_r \rho^\Lambda_s \cdots)_{m} (y\psi_{r,s}(x_y))\\
		&=y\psi_{r,s}\bigg(( \rho_r^{\Gamma^\mathcal{U}} \rho^{\Gamma^\mathcal{U}}_s \rho^{\Gamma^\mathcal{U}}_r \cdots)_{m}(x_y) -(\rho^{\Gamma^\mathcal{U}}_s \rho^{\Gamma^\mathcal{U}}_r \rho^{\Gamma^\mathcal{U}}_s \cdots)_{m} (x_y)\bigg)\\
		&=y \psi_{r,s}\bigg(\sum_{i,p} c_iX^{r,s,x_y}_{p,i} p\bigg).
	\end{align*}
Since $\psi_{r,s}$ is $A$-linear we get the result. The last statement follows directly by the definition of $J_0^{\Lambda}$.
	\end{proof}
\subsection{Extended Hecke datum $(D_{\mathbb{Z}}, \mathbb{Z})$}\label{sec:EIHDZ}
We now explicitly compute the generators for $J_0^{\Gamma^{\mathcal{U}}}$ in the special case where the extended Hecke datum is $(D_{\mathbb{Z}}, \mathbb{Z})$ (Theorem \ref{thm:relations}) and use Theorem \ref{thm:psimap} to compute the generators of $J_0^\Lambda$, for three interesting pre-$D$-graphs $\Lambda$ (see Examples \ref{subsec:onevertex},  \ref{subsec:B3} and  \ref{subsec:AHA}).

For the remainder of this section we work with the extended Hecke datum $(D_{\mathbb{Z}}, \mathbb{Z})$ where $D_{\mathbb{Z}}=(W,S,\allowbreak \mathbb{Z}[v,v^{-1}],\allowbreak v,v^{-1})$ (thus $A=\mathbb{Z}[v,v^{-1}]$) and let $\{v^i\}_{i \in \mathbb{Z}}$ be the basis for $\mathbb{Z}[v,v^{-1}]$ as a $\mathbb{Z}$-module. Throughout this section (unless otherwise noted) let $(r,s)\in S^2_{fin}$. In Section \ref{subsec:comput} we described an explicit formula to compute $(\rho_r \rho_s \rho_r \dots)_{m_{r,s}} - (\rho_s \rho_r \rho_s \dots)_{m_{r,s}}$ for any pre-$D$-graph. Here we will give a simplified version for when $(D,A_0)=(D_{\mathbb{Z}},\mathbb{Z})$. The main idea is to compute $(\dots \rho_r \rho_s \rho_r )_{m_{r,s}} - (\dots\rho_s \rho_r \rho_s)_{m_{r,s}}$ on the universal pre-$D_{\mathbb{Z}}$-graph (this computation differ by a power of $-1$ from one mentioned above).

Let $\Gamma^\mathcal{U}$ be the universal pre-$D_{\mathbb{Z}}$-graph with respect to $r,s$. Since there is only one (directed) edge from any one vertex to another, for any path $p=x_{\alpha_1}\xleftarrow{e_1} x_{\alpha_2} \cdots \xleftarrow{e_{n-1}} x_{\alpha_n} \in P^{\Gamma^{\mathcal{U}}}$ we denote $p$ as $\alpha_1\alpha_2\cdots \alpha_n$ for $\alpha_i \in \{ r,s,\varnothing,\wp\}$ (e.g. $x_s\leftarrow x_r\leftarrow x_\varnothing \leftarrow x_\wp$ is denoted as $sr\varnothing\wp$).

For $m$ a positive integer, let 
	\begin{equation}\label{eq:defcm}
		c_m(r,s) =\cm{m}{r}{s} \in \mathbb{Z}[\Gamma^{\mathcal{U}}],
	\end{equation}
	where $\lfloor \cdot \rfloor$ is the floor function on $\mathbb{Z}$. Let
\[ d_m(r,s)=\begin{cases}c_m(r,s) & \text{ if $m$ is even}
\\
c_m(s,r)& \text{ if $m$ is odd}\end{cases}, \qquad b_m=\begin{cases} s & \text{ if $m$ is even}
\\
r& \text{ if $m$ is odd}\end{cases}.\]
For example,
	\begin{align*}
	d_1(r,s) = r, \ \  d_2(r,s) = rs\ \, d_3(r,s) = rsr-r\ \ d_4(r,s) = rsrs-2rs\\
		d_5(r,s) = rsrsr-3rsr+r\ \ d_6(r,s) = rsrsrs-4rsrs+3rs.
	\end{align*}
	
For any element $w\in W$ with a unique reduced expression $w=r_1\dots r_n$, let $\rho_{w}:=\rho_{r_1}\cdots \rho_{r_n}$. For any sequence $(r_1, \dots, r_n)$ with $r_i \in S$ for $i=1,\dots,n$ such that $w=r_1\cdots r_n$ is reduced and every $y \in W$ with $y<w$ (in Bruhat order) has a unique reduced expression, let $C_{r_1,\dots,r_n}$ be the endomorphism of $A[\Gamma^\mathcal{U}]$ defined as
 	\begin{equation}\label{eq:defrhoC}
		C_{r_1,\dots,r_n}:= \rho_{r_1}\cdots \rho_{r_n}+  \sum_{y < w} (-v)^{l(w)-l(y)}\overline{p_{y,w}} \rho_y,
	\end{equation}
 where $\rho_1=\id_{A[\Gamma^\mathcal{U}]}$, $p_{y,w}$ are the Kazhdan-Lusztig polynomials as defined in \cite{KL1}, and $\overline{p_{y,w}}:= p_{y,w}(v^{-1})$. 
If $w=r_1\dots r_n$ is a unique reduced expression for $w$ then we denote $C_{r_1,\dots,r_n}$ as $C_w$. It then follows
	\begin{equation}\label{eq:samerhoandC}	
		C_{(\dots r,s,r)_{m_{r,s}}}- C_{(\dots s,r,s)_{m_{r,s}}}= (\cdots \rho_r \rho_s \rho_r)_{m_{r,s}} - (\cdots \rho_s \rho_r \rho_s)_{m_{r,s}}.
	\end{equation}
	Since we will use the endomorphisms $C_r$ often, it is worth pointing out that $C_r=\rho_r - v \id_{A[\Gamma^\mathcal{U}]}$.
		\begin{lemma}\label{lemma:Cconditions}
Let $r,s \in S$, $\Gamma^\mathcal{U}$ be the universal pre-$D_{\mathbb{Z}}$-graph with respect to $r,s$, and  $m\in \mathbb{N}_{\geq2}$ such that $m\leq m_{r,s}$ then
		\begin{gather}
		 C_{(\dots r,s,r )_m}  = \sum_{j=0}^{\lfloor \frac{m-1}{2} \rfloor} D_{m,j} (\cdots C_r C_s C_r)_{m-2j}\label{eq:C1}\\
 		C_{(\dots s,r,s)_m} = \sum_{j=0}^{\lfloor \frac{m-1}{2} \rfloor} D_{m,j} (\cdots C_s C_r C_s )_{m-2j}\label{eq:C2},
		\end{gather}
where $D_{m,j} = (-1)^j \binom{m-1-j}{j}$.
	\end{lemma}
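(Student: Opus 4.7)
The plan is to prove identity $(\ref{eq:C1})$ by strong induction on $m$; identity $(\ref{eq:C2})$ then follows by the symmetry $r\leftrightarrow s$. The base cases $m=1,2$ are immediate: $m=1$ gives $C_r$ on both sides, and $m=2$ reduces to $C_{sr}=C_sC_r$, which expands via the defining formula $C_w=\rho_w+\sum_{y<w}(-v)^{l(w)-l(y)}\overline{p_{y,w}}\,\rho_y$---noting that $p_{y,w}=1$ throughout since $w$ lies in the dihedral range where every $y\le w$ has a unique reduced expression---together with the quadratic relation $\rho_X^2=(v-v^{-1})\rho_X+\id$, which is straightforward to verify directly on the four vertices of $\Gamma^\mathcal{U}$.

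For the inductive step, the first thing I would do is record a purely combinatorial recursion for the right-hand side. Setting $R_m:=\sum_{j}D_{m,j}(\cdots C_rC_sC_r)_{m-2j}$ and letting $X_{m+1}$ denote the first factor of $(\cdots C_rC_sC_r)_{m+1}$ (so $X_{m+1}=C_r$ when $m+1$ is odd and $X_{m+1}=C_s$ when $m+1$ is even), observe that the leading factor of each $(\cdots C_rC_sC_r)_{m-2j}$ has the opposite parity, so that $X_{m+1}\cdot(\cdots C_rC_sC_r)_{m-2j}=(\cdots C_rC_sC_r)_{m+1-2j}$. Pascal's identity $\binom{m-j}{j}=\binom{m-1-j}{j}+\binom{m-1-j}{j-1}$ then yields the three-term recursion
\[R_{m+1}=X_{m+1}\cdot R_m-R_{m-1}.\]

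By the inductive hypothesis applied to both $m$ and $m-1$, identity $(\ref{eq:C1})$ for $m+1$ will follow once the analogous recursion is established at the Kazhdan--Lusztig level:
\[C_{(\cdots rsr)_{m+1}}=C_{X_{m+1}}\cdot C_{(\cdots rsr)_m}-C_{(\cdots rsr)_{m-1}}.\]
To prove this, I would use the explicit dihedral expansion $C_w=\rho_w+\sum_{k=1}^{l(w)-1}(-v)^{l(w)-k}\bigl(\rho_{(\cdots rsr)_k}+\rho_{(\cdots srs)_k}\bigr)+(-v)^{l(w)}\id$ and carry out the multiplication by $C_{X_{m+1}}=\rho_{X_{m+1}}-v\,\id$ term-by-term. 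Each product $\rho_{X_{m+1}}\rho_y$ simplifies via $\rho_X\rho_y=\rho_{Xy}$ when $l(Xy)>l(y)$ and $\rho_X\rho_y=(v-v^{-1})\rho_y+\rho_{Xy}$ otherwise---both being consequences of the quadratic relation that hold in $A[\Gamma^\mathcal{U}]$ without invoking any braid relation.

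The main obstacle is precisely the bookkeeping in this last step: each summand of $C_{(\cdots rsr)_m}$ must be split according to whether its first letter matches $X_{m+1}$, and the resulting terms---including the correction $(v-v^{-1})\rho_y$ from the quadratic relation---must be shown to reassemble into $C_{(\cdots rsr)_{m+1}}+C_{(\cdots rsr)_{m-1}}$ with the correct powers of $-v$. Alternatively, one can sidestep this calculation by observing that since $\rho_r,\rho_s$ on $A[\Gamma^\mathcal{U}]$ satisfy only the quadratic relations, they generate an image of the Hecke algebra of the infinite dihedral group $I_2(\infty)$, within which the desired three-term recursion for $C_w$ is a classical Kazhdan--Lusztig identity.
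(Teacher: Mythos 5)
Your proposal is correct, and the ``alternative'' you offer at the end is precisely the paper's approach: the paper works in the Hecke algebra of the universal (hence, for the pair $r,s$, infinite dihedral) Coxeter group, where $\T_s \mapsto \rho_s$ is a ring homomorphism because only the quadratic relations are needed, and closes the induction via the classical dihedral product rule $\C_t\,\C_{(\cdots rsr)_{m-1}} = \C_{(\cdots rsr)_m} + \C_{(\cdots rsr)_{m-2}}$, then transports the result to general $W$ by noting $\rho_r,\rho_s$ (hence the $C$'s) do not depend on $m_{r,s}$. Your explicit Pascal-identity check of the Chebyshev-type recursion $R_{m+1}=X_{m+1}R_m-R_{m-1}$ is a welcome unpacking of what the paper compresses into ``follows by induction,'' while the term-by-term bookkeeping route you sketch first is neither completed nor needed once the $I_2(\infty)$ observation is made.
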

	\begin{proof}
	First we prove the result when $W$ is the universal Coxeter group on the set $S$. Let $\{\T_s| s \in S\}$ be the generating set of $\mathcal{H}(D_{\mathbb{Z}})$ as an $A$-algebra (recall $A=\mathbb{Z}[v,v^{-1}]$). The map $\phi : \mathcal{H}(D_{\mathbb{Z}}) \rightarrow End_{A[\Gamma^\mathcal{U}]}(A{[\Gamma^\mathcal{U}}])$ that sends $\T_s$ to $\rho_s$ for all $s \in S$ is a ring homomorphism since there are no braid relations in $\mathcal{H}(D_{\mathbb{Z}})$. Let $\{\C_w\}_{w\in W}$ be the Kazhdan-Lusztig basis of $\mathcal{H}(D_{\mathbb{Z}})$ (see \cite{L1} 6.6), then $\phi(\C_w)=C_w$ as defined in $(\ref{eq:defrhoC})$ (notice that $C_w$ makes sense as every element $w\in W$ has a unique reduced expression). We now show equations $(\ref{eq:C1})$ and $(\ref{eq:C2})$ are true for the Kazhdan-Lusztig basis and then apply $\phi$ to get the result for the $C_w$ endomorphisms. For $m=2$, it is known $\C_{sr}=\C_s\C_r$. Let $t=r$ if $m$ is odd and $t=s$ if $m$ is even. For $3\leq m\leq m_{r,s}$ it is known that $\C_t\C_{(\dots rsr)_{m-1}}=\C_{(\dots rsr)_{m}}+\C_{(\dots rsr)_{m-2}}$. Equation $(\ref{eq:C1})$  now follows by induction. The result for Eq. $(\ref{eq:C2})$ is obtained similarly.

Now let $W$ be any Coxeter group. The definition of $\rho_r$ and $\rho_s$ does not depend on the value of $m_{r,s}$ and hence $\rho_r, \rho_s$ are the same endomorphisms of $A[\Gamma^\mathcal{U}]$ as the ones defined for the universal Coxeter group. Thus, $C_r, C_s, C_{(\dots r,s,r )_m}$ and $C_{(\dots s,r,s)_m} $ are also the same endomorphisms as the ones defined for the universal Coxeter group, hence the formulas $(\ref{eq:C1})$ and $(\ref{eq:C2})$ still hold.
	\end{proof}

For the following lemma let $\beta=v+v^{-1}$.
	\begin{lemma}\label{lemma:Cronr}
	Let $r,s,\varnothing$ and $\wp$ be the vertices of $\Gamma^{\mathcal{U}}$  (see Figure \ref{fig:univPDG}), then 
		\begin{gather}
			(\cdots C_rC_sC_r)_{m}(r)=-\beta( rsr\cdots)_m + \sum_{i=1}^{m-1} (-\beta)^i ( rsr\cdots)_{m-i}\wp\label{eq:rsronr}\\
			(\cdots C_sC_rC_s)_{m}(r)=( rsr\cdots)_{m+1} + \sum_{i=0}^{m-1} (-\beta)^i ( rsr\cdots)_{m-i}\wp.\label{eq:srsonr}\\
			( \dots C_r C_s C_r)_m (\varnothing)= \varnothing(rsr \dots)_m + \sum_{i=0}^{m-1} (-\beta)^i \varnothing(rsr \dots)_{m-1-i} \wp \label{eq:rsront}\\
			( \dots C_s C_r C_s)_m (\varnothing)= \varnothing(srs \dots)_m + \sum_{i=0}^{m-1} (-\beta)^i \varnothing(srs \dots)_{m-1-i} \wp.\label{eq:srsont}
		\end{gather}
	To find formulas for the alternating endomorphisms above on the vertex $s$, interchange $r$ with $s$ in Equations (\ref{eq:rsronr}) and (\ref{eq:srsonr}). Finally,  for any $r_i \in \{r,s\}, i=1, 2 \dots, n$,  we get $C_{r_1}C_{r_2}\cdots C_{r_n} (\wp) = (-\beta)^n \wp$.
	\end{lemma}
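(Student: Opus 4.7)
The plan is to first compute $C_r$ and $C_s$ explicitly on each of the four vertices of $\Gamma^{\mathcal{U}}$, and then derive each of the displayed formulas by induction, using the fact that $C_r,C_s$ are left $A[\Gamma^{\mathcal{U}}]$-module endomorphisms of $A[\Gamma^{\mathcal{U}}]$.

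Since $C_r=\rho_r-v\,\id_{A[\Gamma^{\mathcal{U}}]}$, Eq.~(\ref{eq:rhodeforig}) applied to the complete quiver $\Gamma^{\mathcal{U}}$ with labels $\{r\},\{s\},\emptyset,\{r,s\}$ gives
\[
C_r(r)=-\beta r,\quad C_r(s)=sr+s\wp,\quad C_r(\varnothing)=\varnothing r+\varnothing\wp,\quad C_r(\wp)=-\beta\wp,
\]
and symmetric formulas for $C_s$. The last statement of the lemma is then an immediate induction on $n$ from $C_r(\wp)=C_s(\wp)=-\beta\wp$.

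For the remaining four formulas, the key structural input is Proposition~\ref{prop:quotientidemp}(\ref{property:isoend}): since $\rho_r,\rho_s\in\mathrm{End}_{A[\Gamma^{\mathcal{U}}]}A[\Gamma^{\mathcal{U}}]$, so are $C_r,C_s$, hence for any path $p$ with source $y$ one has $C_r(p)=p\cdot C_r(y)$ and $C_s(p)=p\cdot C_s(y)$. Combined with the vertex computations, this yields a simple behavior on paths: if the source of $p$ is $x_{\wp}$, both $C_r$ and $C_s$ act as $-\beta$; if the source is $x_r$ (resp.\ $x_s$), then $C_r$ (resp.\ $C_s$) multiplies by $-\beta$, while the opposite operator concatenates $p$ with an extension of length $1$ and simultaneously produces a second term ending at $\wp$.

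With this behavior established, each of (\ref{eq:rsronr})--(\ref{eq:srsont}) is proven by induction on $m$. The base case $m=1$ is a direct verification from the vertex formulas. For the inductive step one prepends to the level-$m$ formula the appropriate operator (whose choice is determined by the parity of $m$): the terms in the $\wp$-sum each pick up an extra factor of $-\beta$ (their source being $x_{\wp}$), while the leading alternating-path term has source $x_r$ or $x_s$ and contributes both an extension to length $m+1$ and a new $\wp$-tail term. A reindexing $j=i+1$ merges this new $\wp$-tail term with the shifted sum and recovers exactly the right-hand side at level $m+1$. The formulas on the vertex $s$ and equation (\ref{eq:srsont}) then follow from the proven ones by the $r\leftrightarrow s$ symmetry of $\Gamma^{\mathcal{U}}$. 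The main technical obstacle is the index bookkeeping: the asymmetry between (\ref{eq:rsronr}), where the $\wp$-sum starts at $i=1$, and (\ref{eq:srsonr}), (\ref{eq:rsront}), (\ref{eq:srsont}), where it starts at $i=0$, reflects whether the base vertex computation already produces a $\wp$-tail — $C_r(r)=-\beta r$ does not, but $C_s(r)=rs+r\wp$ and $C_r(\varnothing)=\varnothing r+\varnothing\wp$ do, so the sum in the former case is shifted by one in comparison to the latter.
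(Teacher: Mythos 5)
Your proposal is correct and follows essentially the same route as the paper: compute $C_r$, $C_s$ on the four vertices (the paper only records $C_r(r)=-\beta r$ explicitly, but uses the others implicitly), then prove (\ref{eq:rsronr}) by induction on $m$ by prepending the next operator $C_{b_m}$, and obtain the remaining formulas by the same method/symmetry, with the final statement an immediate consequence of $C_r(\wp)=C_s(\wp)=-\beta\wp$. Your explicit invocation of the left-module-homomorphism property to reduce to vertex computations makes the bookkeeping a bit more transparent than the paper's terse inductive line, but the underlying argument is the same.
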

	\begin{proof}
	We use induction to prove Eq. $(\ref{eq:rsronr})$. For $m=1$, $C_r(r)=-\beta r$, hence the result is true. Suppose the result holds for any $k<m$, then 
	\small{\begin{align*}
		C_{b_m}(\cdots C_rC_sC_r)_{m-1}(r)&=C_{b_m}\bigg( -\beta( rsr\cdots )_{m-1} + \sum_{j=1}^{m-2} (-\beta)^j (rsr\cdots )_{m-1-j}\wp\bigg)\\
		&= -\beta(rsr\cdots)_m + \sum_{i=1}^{m-1} (-\beta)^{i} (rsr\cdots )_{m-i}\wp,
	\end{align*}}
 thus Eq. (\ref{eq:rsronr}) is true. The proof of Equations (\ref{eq:srsonr}),(\ref{eq:rsront}) and (\ref{eq:srsont}) follow similarly. The final statement follows from the fact that $C_r(\wp)=-\beta \wp=C_s(\wp)$. 
	\end{proof}
		The next theorem is key in the computations to follow. Together with Theorem \ref{thm:psimap} it allow us to compute the generators for $J_0^\Lambda$ for any finite pre-$D$-graph $\Lambda$.
		\begin{theorem}\label{thm:relations} Let $(r,s) \in S^2_{fin}$ with $r\neq s$, $m:=m_{r,s}$, and $\Gamma^{\mathcal{U}}$ be the universal pre-$D_{\mathbb{Z}}$-graph with respect to $r,s \in S$ then the relations coming from $(\rho_r\rho_s\rho_r\cdots)_{m}-(\rho_s\rho_r\rho_s\cdots)_{m}=0$ are
		\small{\[d_{m}(r,s)=0,  \quad d_{m}(s,r)=0,\quad \varnothing(srs\dots)_i \wp= \varnothing(rsr\dots)_i \wp \text{ for } i=1,2, \dots, m-1.\]}
	In other words, the ideal $J_0^{\Gamma^{\mathcal{U}}}$ is generated by the elements $d_m(r,s), d_m(s,r)$ and $\varnothing(srs\dots)_i \wp- \varnothing(rsr\dots)_i \wp$ for $ i=1,2, \dots, m-1.$
	\end{theorem}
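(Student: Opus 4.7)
The strategy is to compute $\bigl((\rho_r\rho_s\rho_r\cdots)_m - (\rho_s\rho_r\rho_s\cdots)_m\bigr)(x)$ on each of the four vertices $x \in V^{\Gamma^{\mathcal{U}}} = \{x_r, x_s, x_\varnothing, x_\wp\}$ and read off the generators of $J_0^{\Gamma^{\mathcal{U}}}$ as the coefficients of the basis elements $v^i$, $i \in \mathbb{Z}$, of $\mathbb{Z}[v,v^{-1}]$. The reduction to the four vertices is immediate from left $A[\Gamma^{\mathcal{U}}]$-linearity of each $\rho_z$. By Eq.~(\ref{eq:samerhoandC}) the braid difference coincides (up to a sign from switching the starting/ending convention when $m$ is even, which does not affect the generated ideal) with $C_{(\dots r,s,r)_m}(x) - C_{(\dots s,r,s)_m}(x)$, so I will expand via Lemma~\ref{lemma:Cconditions} as a $D_{m,j}$-weighted alternating sum of $(\cdots C_rC_sC_r)_{m-2j}$ and $(\cdots C_sC_rC_s)_{m-2j}$ and evaluate using Lemma~\ref{lemma:Cronr}.

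At $x_\wp$, both terms reduce to $(-\beta)^m\wp$ (last assertion of Lemma~\ref{lemma:Cronr}), so the difference vanishes. At $x_r$, combining (\ref{eq:rsronr}) and (\ref{eq:srsonr}) the $\wp$-sums telescope and the difference collapses to
\begin{equation*}
-\sum_j D_{m,j}\bigl[\beta(rsr\cdots)_{m-2j} + (rsr\cdots)_{m-2j+1} + (rsr\cdots)_{m-2j}\wp\bigr].
\end{equation*}
Since $\beta = v + v^{-1}$, the coefficient of $v^{\pm 1}$ is $-\sum_j D_{m,j}(rsr\cdots)_{m-2j}$; a parity check identifies this with $-d_m(r,s)$ (for $m$ odd the summands have odd length and match $(\dots rsr)_{m-2j}$, yielding $c_m(s,r)$; for $m$ even they match $(\dots srs)_{m-2j}$, yielding $c_m(r,s)$). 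The coefficient of $v^0$, namely $-\sum_j D_{m,j}[(rsr\cdots)_{m-2j+1} + (rsr\cdots)_{m-2j}\wp]$, is obtained from $d_m(r,s)$ by right-multiplication in the path algebra by a single suitable edge (to $x_r$ or $x_s$ for the first piece, to $x_\wp$ for the second), hence already lies in the two-sided ideal generated by $d_m(r,s)$. Vertex $x_s$ is handled by the symmetric argument $r\leftrightarrow s$ and yields $-d_m(s,r)$.

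The decisive vertex is $x_\varnothing$. Using (\ref{eq:rsront}) and (\ref{eq:srsont}) the difference splits as
\begin{align*}
&\sum_j D_{m,j}\bigl[\varnothing(rsr\cdots)_{m-2j} - \varnothing(srs\cdots)_{m-2j}\bigr] \\
&\quad + \sum_j D_{m,j}\sum_{i=0}^{m-2j-1}(-\beta)^i\bigl[\varnothing(rsr\cdots)_{m-2j-1-i}\wp - \varnothing(srs\cdots)_{m-2j-1-i}\wp\bigr].
\end{align*}
The first line factors in the path algebra as $(x_\varnothing \leftarrow x_r)\cdot d_m(r,s) - (x_\varnothing \leftarrow x_s)\cdot d_m(s,r)$, so it already lies in the ideal generated by $d_m(r,s), d_m(s,r)$. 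For the second line, set $\varphi_k := \varnothing(rsr\cdots)_k\wp - \varnothing(srs\cdots)_k\wp$ (so $\varphi_0 = 0$); exchanging summation rewrites it as $\sum_{k=1}^{m-1} P_k(\beta)\varphi_k$, where $P_k(\beta)$ has degree $m-1-k$ with leading coefficient $(-1)^{m-1-k}$ and every monomial $v^p$ appearing in $P_k$ has parity $p \equiv m-1-k \pmod 2$. This parity separation implies that the coefficient of $v^{m-1-k}$ in the second line is $(-1)^{m-1-k}\varphi_k$ plus a $\mathbb{Z}$-combination of $\varphi_{k'}$ with $k' < k$ and $k' \equiv k \pmod 2$; a downward induction on $k$ (within each parity class) recovers each $\varphi_k$ from the $v^i$-coefficients, and conversely each $v^i$-coefficient is evidently a $\mathbb{Z}$-combination of the $\varphi_k$'s. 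Hence the $v^i$-coefficients at $x_\varnothing$ generate the same ideal as $\{\varphi_k\}_{k=1}^{m-1}$, which up to sign is the promised set.

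The main obstacle is the combinatorial bookkeeping in three separate places. First, the identification $\sum_j D_{m,j}(rsr\cdots)_{m-2j} = d_m(r,s)$ forces a parity split to reconcile the word convention $(rsr\cdots)_k$ with the macro conventions $(\dots srs)_k$ and $(\dots rsr)_k$ that define $c_m(\cdot,\cdot)$. Second, the $v^0$-reduction at $x_r$ and $x_s$ demands an explicit check that right-concatenation of $d_m(r,s)$ (respectively $d_m(s,r)$) with one fixed edge reproduces exactly the needed sums, which again depends on parity. Third, and most delicately, the triangular unfolding at $x_\varnothing$ relies on the parity-separation property of $P_k(\beta)$ and the non-vanishing ($\pm 1$) of its leading coefficient to guarantee invertibility over $\mathbb{Z}$ of the passage from $v^i$-coefficients to individual $\varphi_k$'s.
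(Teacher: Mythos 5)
Your proof follows essentially the same approach as the paper: reduce to the $C$-operators via Eq.~(\ref{eq:samerhoandC}), apply Lemmas~\ref{lemma:Cconditions} and~\ref{lemma:Cronr} at each of the four vertices, and use a triangular parity argument to extract the $\varphi_k$-relations at $x_\varnothing$ (your observation that the non-$\wp$ part factors through $d_m(r,s)$ and $d_m(s,r)$ plays the same role as the paper's step of right-multiplying the $v^0$-coefficient by the idempotents $r$, $s$, $\wp$). One small wording slip: the recursion recovering $\varphi_k$ from the $v^{m-1-k}$-coefficient depends on $\varphi_{k'}$ with $k'<k$, so the induction runs \emph{upward} in $k$ (equivalently, downward in $i=m-1-k$), not downward.
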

	\begin{proof}
	By Eq. $(\ref{eq:samerhoandC})$, we have 
		\begin{equation}\label{eq:rhoC}
			(-1)^{m-1}((\rho_r \rho_s \rho_r\cdots)_{m} - (\rho_s \rho_r \rho_s\cdots)_{m})=C_{(\dots r,s,r)_{m}}- C_{(\dots s,r,s)_{m}}.
		\end{equation}
	Recall, from the way we set up notation in Eq. $(\ref{eq:relations})$, the element $\sum_qX_{q,i}^{r,s,x}q \in \mathbb{Z}[\Gamma]$ is just the coefficient of $v^i$ in either side of Eq. $(\ref{eq:rhoC})$. We now compute the right hand side of $(\ref{eq:rhoC})$ on all 4 vertices.\\
	\textit{Step 1:} By Lemmas \ref{lemma:Cconditions} and \ref{lemma:Cronr}  we get that $C_{(\dots r,s,r)_{m}}(r) - C_{(\dots s,r,s)_{m}}(r)$ equals (recall $b_m=s$ for $m$ even and $r$ for $m$ odd)
				\[\sum_{j=0}^{\lfloor \frac{m-1}{2} \rfloor} D_{m,j} \left( -\beta( rsr\cdots)_{m-2j} -( rsr\cdots)_{m-2j+1} - ( rsr\cdots)_{m-2j}\wp\right),\]
 which is $	-\beta d_m(r,s)-d_m(r,s) \big((b_m \leftarrow b_{m+1})+ (b_m \leftarrow \wp)\big).$
	Thus 
	\begin{align*}
		&\sum_q X^{r,s,r}_{q,1}q =\sum_q X^{r,s,r}_{q,-1}q=-d_m(r,s)\\
		&\sum_q X^{r,s,r}_{q,0}q=-d_m(r,s) \big((b_m \leftarrow b_{m+1})+ (b_m \rightarrow \wp)\big)\\
		 &\sum_q X^{r,s,r}_{q,i}q =0 \text{ for all }i\neq 0,1,-1.
	\end{align*}
		  Therefore, the relation $d_m(r,s)=0$ is enough to generate all relations  $\sum_q X^{r,s,r}_{q,i}q=0$ for all $i \in \mathbb{Z}$. By symmetry we get that the relation $d_m(s,r)=0$ is enough to generate all relations  $\sum_q X^{r,s,s}_{q,i}q=0$ for all $i \in \mathbb{Z}$.\\
\textit{Step 2:} By Lemmas \ref{lemma:Cconditions} and \ref{lemma:Cronr} we get
		\small{\begin{align}
			&C_{(\dots r,s,r)_{m}}(\varnothing) - C_{(\dots s,r,s)_{m}}(\varnothing)
			=\sum_{j=0}^{\lfloor \frac{m-1}{2} \rfloor} D_{m,j} \bigg(  \varnothing(rsr \dots)_{m-2j}-  \varnothing(srs \dots)_{m-2j}\nonumber \\
			&\;\;\; + \sum_{i=0}^{m-2j-2} (-\beta)^i \big( \varnothing(rsr \dots)_{m-2j-1-i} \wp  -  \varnothing(srs \dots)_{m-2j-1-i} \wp\big)\ \bigg) \label{eq:beta}
		\end{align}	}		
	For any $N\in \mathbb{Z}_{\geq1}$, the coefficient of $(-\beta^N)$ in the expression above is 
		\[ \sum_{j \in \mathbb{Z}| 0\leq N \leq m-2j-2} D_{m,j}\big( \varnothing((rsr \dots)_{m-2j-1-N}) \wp  -  \varnothing((srs \dots)_{m-2j-1-N} )\wp\big)\]
	The biggest exponent of $\beta$ in Equation (\ref{eq:beta}) is $m-2$, and thus the coefficient of $v^{m-2}$ and $v^{-(m-2)}$ (up to sign) is $(\varnothing r\wp - \varnothing s\wp)$. Hence, the relations $\sum_q X^{r,s,\varnothing}_{q, m-2}q =0$ and  $\sum_ q X^{r,s,\varnothing}_{q,-(m-2)}q=0$ are equivalent to $\varnothing r\wp-\varnothing s\wp=0$. Similarly, the coefficient of $v^{m-3}$ and $v^{-(m-3)}$ (up to sign) is $(\varnothing rs\wp - \varnothing sr\wp)$. The coefficient of $v^{m-4}$ and $v^{-(m-4)}$ (up to sign) is $(\varnothing rsr\wp - \varnothing srs\wp) + D_{m,1}(\varnothing r\wp - \varnothing s\wp)$. However, since $(\varnothing r\wp - \varnothing s\wp)$ is already one of the relations then both sets of relations $\{\sum_q X^{r,s,\varnothing}_{q, m-i}q =0\}_{i=2,3,4} $ and $\{\sum_q X^{r,s,\varnothing}_{q, -(m-i)}q =0\}_{i=2,3,4}$ equal $\{\varnothing(srs\dots)_j \wp=\varnothing(srs\dots)_j \wp\}_{j=1,2,3}$.  We continue this process to get the set of relations $\{X^{r,s,\varnothing}_{q,i}q=0\}$ for $i\in \{-(m-2), -(m-1), \dots, -1,1,\dots, m-2\}$ is equivalent to 
		\[ \{ \varnothing(srs\dots)_j \wp=\varnothing(rsr\dots)_j \wp \;\;| \;\;j =1,2, \dots, m-2\}. \]
For $N=0$ we analyse the relation $\sum_q X_{q,0}^{r,s,\varnothing}q=0$. Multiplying by the idempotents $r$, $s$, and $\wp$ on the right we obtain the relations $\varnothing(srs\dots)_{m-1} \wp=\varnothing(rsr\dots)_{m-1} \wp$ and $ \varnothing d_{m}(r,s)=0= \varnothing d_{m}(s,r)$.\\
\textit{Step 3:} it follows from the last statement of Lemma \ref{lemma:Cronr} that we get no relations from $C_{(\dots r,s,r)_{m}}(\wp) - C_{(\dots s,r,s)_{m}}(\wp)=0$. This completes the proof.
	\end{proof}

	\begin{corollary}\label{cor:finitequiver}
		Let $\Lambda$ be any pre-$D_{\mathbb{Z}}$-graph with finite vertex set and finitely many edges. For $(r,s) \in S^2_{fin}$  the relations coming from $$(\rho^{\Lambda}_r\rho^{\Lambda}_s\rho^{\Lambda}_r\cdots)_{m_{r,s}}-(\rho^{\Lambda}_s\rho^{\Lambda}_r\rho^{\Lambda}_s\cdots)_{m_{r,s}}=0$$ are \small{$\psi_{r,s} (d_{m_{r,s}}(r,s))=0$, $\psi_{r,s} (d_{m_{r,s}}(s,r))=0$}, and \[\psi_{r,s} (\varnothing(srs\dots)_i \wp- \varnothing(rsr\dots)_i \wp)=0 \text{ for } i \in \{1,2,\dots, m_{r,s}-1\}.\]
	\end{corollary}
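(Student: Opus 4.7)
The plan is to deduce this corollary directly by combining Theorem \ref{thm:psimap} with Theorem \ref{thm:relations}. Theorem \ref{thm:relations} identifies a compact generating set for $J_0^{\Gamma^\mathcal{U}}$ in the universal case, namely
\[
\{d_{m}(r,s),\ d_{m}(s,r)\}\ \cup\ \{\varnothing(srs\cdots)_i\wp - \varnothing(rsr\cdots)_i\wp : 1\le i\le m-1\},
\]
where $m=m_{r,s}$; and Theorem \ref{thm:psimap} transfers a generating set from $J_0^{\Gamma^\mathcal{U}}$ to one for $J_0^\Lambda$ by applying the $A$-algebra homomorphism $\psi_{r,s}$ (and left-multiplying by $y\in V^\Lambda$).

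First I would recall that Theorem \ref{thm:psimap} gives the identity
\[
(\rho_r^{\Lambda}\rho_s^{\Lambda}\rho_r^{\Lambda}\cdots)_{m}(y)-(\rho_s^{\Lambda}\rho_r^{\Lambda}\rho_s^{\Lambda}\cdots)_{m}(y)= y\,\psi_{r,s}\Big(\sum_{i,p}c_i X_{p,i}^{r,s,x_y}p\Big),
\]
so the relations to be imposed on $\rho^\Lambda$ are precisely the images under $\psi_{r,s}$ of the relations at $\Gamma^\mathcal{U}$, multiplied on the left by each vertex $y\in V^\Lambda$. Substituting Theorem \ref{thm:relations}'s simpler generating set for the family $\{\sum_p X_{p,i}^{r,s,x}p\}$ then gives a candidate list of generators of $J_0^\Lambda$: the $\psi_{r,s}$-images of $d_{m}(r,s)$, $d_{m}(s,r)$, and of the differences $\varnothing(srs\cdots)_i\wp-\varnothing(rsr\cdots)_i\wp$, each left-multiplied by an arbitrary $y\in V^\Lambda$.

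The last step is to absorb the $y$-dependence, matching the exact form of the corollary. Each generator $g$ in Theorem \ref{thm:relations}'s list is supported at a single vertex $x\in V^{\Gamma^\mathcal{U}}$, meaning $g=x\cdot g$ (with $x=x_r,x_s,x_\wp$ respectively for the three cases). Since $\psi_{r,s}$ is an algebra homomorphism, $\psi_{r,s}(g)=\psi_{r,s}(x)\psi_{r,s}(g)=\sum_{y\in V^\Lambda_x} y\,\psi_{r,s}(g)$, and since the $\{y\}_{y\in V^\Lambda}$ are orthogonal idempotents in $A_0[\Lambda]$, the equation $\psi_{r,s}(g)=0$ is equivalent to the collection of equations $y\,\psi_{r,s}(g)=0$ for all $y\in V^\Lambda_x$ (and trivially $y\,\psi_{r,s}(g)=0$ for $y\notin V^\Lambda_x$). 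This reconciles the ``global'' form stated in the corollary with the ``per-vertex'' form coming out of Theorem \ref{thm:psimap}.

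I do not anticipate any serious obstacle: the corollary is essentially a direct assembly of the two earlier theorems, and the only real content is the idempotent-decomposition bookkeeping in the last paragraph to justify that the aggregated relations $\psi_{r,s}(g)=0$ (rather than per-vertex relations $y\,\psi_{r,s}(g)=0$) already generate the two-sided ideal.
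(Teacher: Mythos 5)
Your proof is correct and takes the same route as the paper, which simply states that the corollary follows directly from Theorems~\ref{thm:psimap} and~\ref{thm:relations}; the idempotent-decomposition bookkeeping you supply at the end is exactly the implicit step behind that assertion. One small slip: the third family of generators, $\varnothing(srs\cdots)_i\wp - \varnothing(rsr\cdots)_i\wp$, is supported at $x_\varnothing$ (not $x_\wp$), since the target of those paths is $x_\varnothing$; this does not affect the logic, but the list should read $x=x_r,x_s,x_\varnothing$.
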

	\begin{proof} 
		The proof follows directly from Theorems \ref{thm:psimap} and \ref{thm:relations}.
	\end{proof}

	\begin{figure}
		\begin{center}(a)
			\begin{tikzpicture}[-,auto,node distance=2cm]
				\node[draw,circle]  (1) {$S'$};
				\draw (.5,.5) node {\Large{$x$}};
			\end{tikzpicture}\qquad\qquad
			(b)
			\begin{tikzpicture}[-,auto,node distance=2cm,scale=1, every node/.style={scale=0.75}]
				\draw (1.25,.65) node {\Large{$x_2$}};
				\draw (-1.25,.65) node {\Large{$x_1$}};
				\draw (.5,0) node {\Large{$y$}};
				\draw (.5,.-1.5) node {\Large{$z$}};
				 \node[draw,circle]  (1) {$ r_2 $};
				  \node[draw,circle]  (2) [above right of=1] {$r_3$};
				  \node[draw,circle]  (3) [above left of=1] {$r_3$};
 				 \node [draw,circle] (4) [below of=1] {$r_1 $};
				  \path[->,every node/.style={font=\sffamily\small}]
				    (1) edge  [bend right] node {}(2)
				    	edge  [bend right]node {}(3)
					edge  [bend right]node {}(4)
				(2) edge  [bend right] node {}(1)
				(3) edge  [bend right] node {}(1)
				(4) edge  [bend right] node {}(1);
			\end{tikzpicture}
		\end{center}
	\caption{In (a) we have a one vertex pre-$D_\mathbb{Z}$-graph with associated subset $S'\subseteq S$. In (b) we have a pre-$D_\mathbb{Z}$-graph coming from a $W$-cell for $W=B_3$.}\label{fig:examples}
	\end{figure}
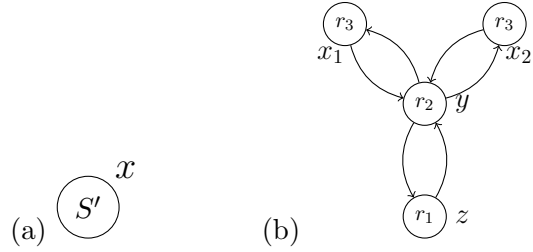
	\subsection{Examples}\label{subsec:examples} We now consider three examples of pre-$D_{\mathbb{Z}}$-graphs. First we consider a one vertex quiver, then a quiver coming from a left cell of $B_3$ and finally a special quiver which gives rise to a quotient path algebra isomorphic to an ideal of the asymptotic Hecke algebra of $(W,S)$. In all three examples we compute a generating set for the ideal $J_0^\Lambda$ using Corollary \ref{cor:finitequiver}.
	
\begin{example}[One vertex quiver]\label{subsec:onevertex}
Let $\Lambda$ be the pre-$D_{\mathbb{Z}}$-graph with one vertex $x$, no edges, and $\mathcal{L}^{\Lambda}(x)=S'\subseteq S$ (see Figure \ref{fig:examples}(a)).  Let $(r,s) \in S^2_{fin}$ with $r\neq s$ and $m:=m_{r,s}$. Since we only have one vertex then either $\psi_{r,s}(\varnothing)=0$ or $\psi_{r,s}(\wp)=0$ (or both). Thus, by Corollary \ref{cor:finitequiver}, the only relations coming from $(\rho_r\rho_s\rho_r \cdots)_m-(\rho_s\rho_r\rho_s \cdots)_m=0$ are $\psi_{r,s}(d_m(r,s))=0$ and $\psi_{r,s}(d_m(s,r))=0$. If $r,s\not \in S'$ or $r,s\in S'$ then we get no relations as $\psi_{r,s} (d_m(r,s))$ and $\psi_{r,s} (d_m(s,r))$ are already zero (because $\psi_{r,s}(r)=\psi_{r,s}(s)=0$). If $r\in S', s\not \in S'$ and $m$ is even, then we get no relations as $\psi_{r,s} (d_m(r,s))$ and $\psi_{r,s} (d_m(s,r))$ are already zero (because $\psi_{r,s}(s)=0$). If $r\in S', s\not \in S'$ and $m$ is odd, then no relation arises from $\psi_{r,s}(d_m(s,r))= 0$ (because $\psi_{r,s}(s)=0$), however $\psi_{r,s}(d_m(r,s))= x$ (up to a power of $-1$), thus we get the relation $x=0$. By symmetry we get the same relation when $r\not \in S', s\in S'$. Thus $\mathcal{R}_0=\mathbb{Z}[\Lambda]/J_0$ is the trivial algebra if there exists a pair $r,s \in S$ such that $m_{r,s}$ is odd and $r\in S', s\not\in S'$. Otherwise we have $\mathcal{R}_0\cong \mathbb{Z}[\Lambda]\cong \mathbb{Z}$. This example shows that even the most simple pre-$D_{\mathbb{Z}}$-graph can have distinct extreme behaviours for different Coxeter groups.
\end{example}
\begin{example}[Cell of $B_3$]\label{subsec:B3} Let $W=B_3, S=\{r_1,r_2,r_3\}$ (with $m_{r_1,r_2}=3, m_{r_2,r_3}=4,$ and $m_{r_1,r_3}=2$). Consider the pre-$D_{\mathbb{Z}}$-graph $\Gamma$ in Figure \ref{fig:examples}(b) (which comes from a Kazhdan-Lusztig $B_3$-cell) with vertices labelled $x_1,x_2,y,z$. Since there are no multiple edges between any two vertices we denote a path $x_1\xleftarrow{f_1}x_2\cdots\xleftarrow{f_n}x_{n+1}$ simply as $x_1x_2\cdots x_{n+1}$. We now compute the generators of $J_0^{\Gamma}$ making use of Corollary \ref{cor:finitequiver}. We split it up in three steps.\\
	\textit{Step 1:} first compute the relations coming from $\trho_{r_1}\trho_{r_2}\trho_{r_1} -\trho_{r_2}\trho_{r_1}\trho_{r_2}=0$ on the universal pre-$D$-graph with respect to $r_1,r_2$ (see Figure \ref{fig:univPDG}). By Theorem \ref{thm:relations}, these relations are 
	\begin{equation}\label{eq:relex1}
	 r_2r_1r_2- r_2=0,\ \ r_1r_2r_1-r_1=0, \ \ \varnothing r_2r_1\wp-\varnothing r_1r_2\wp=0, \ \ \varnothing r_2\wp-\varnothing r_1\wp=0.
	 \end{equation}
By Corollary \ref{cor:finitequiver} the relations on $\Gamma$ come from applying $\psi_{r_1,r_2}$ to the relations in $(\ref{eq:relex1})$. From the first two relations (from left to right) we get $yzy-y=0$ and $zyz-z=0$. From the last two relations we do not get any as $\psi(\wp)=0$ (there is no vertex with associated subset $\{r_1,r_2\}$).\\
	\textit{Step 2:} now we compute the relations coming from $\trho_{r_2}\trho_{r_3}\trho_{r_2}\trho_{r_3} -\trho_{r_3}\trho_{r_2}\trho_{r_3}\trho_{r_2}\allowbreak=0$ on the universal pre-$D$-graph with respect to $r_2,r_3$. By Theorem \ref{thm:relations}, these relations are 
	\begin{equation}\label{eq:relex2}
	 r_2r_3r_2r_3- 2r_2r_3=0,\; r_3r_2r_3r_2- 2r_3r_2=0, \; \varnothing (r_2r_3\dots)_i\wp-\varnothing (r_3r_2\dots)_i\wp=0
	 \end{equation}
	 for  $i=1,2,3.$ By Corollary \ref{cor:finitequiver} the relations on $\Gamma$ come from applying $\psi_{r_2,r_3}$ to the relations in $(\ref{eq:relex2})$ (note $\psi_{r_2,r_3}(r_2)=y$ and $\psi_{r_2,r_3}(r_3)=x_1+x_2$). From the first equation we get $yx_1yx_1+yx_1yx_2+yx_2yx_1+yx_2yx_2 - 2yx_1 - 2yx_2=0$. From the second one we get $x_1yx_1y+x_1yx_2y+x_2yx_1y+x_2yx_2y - 2x_1y - 2x_2y=0$.  From the third set of equations  we do not get any relation as $\psi(\wp)=0$.\\
	 \textit{Step 3:} finally we compute the relations coming from $\trho_{r_1}\trho_{r_3} -\trho_{r_3}\trho_{r_1}=0$ on the universal pre-$D$-graph with respect to $r_1,r_3$. By Theorem \ref{thm:relations}, these relations are $r_1r_3=0$, $r_3r_1=0$, and $tr_1p=tr_3p$. Since $\psi_{r_1,r_3} (r_1r_3)=0=\psi_{r_1,r_3} (r_3r_1)$ (there are no paths from any vertex with associated set $\{r_1\}$ to a vertex with associated set $\{r_3\}$ and viceversa) and $\psi_{r_1,r_3}(\wp)=0$, then we get no relations.\\
	 From the steps above we conclude $\mathcal{R}_0=\mathbb{Z}[\Gamma]/J_0$ where $J_0$ is the two-sided ideal generated by
	 	\begin{align*}
			\{ yzy-y, &zyz-z, yx_1yx_1+yx_2yx_1 - 2yx_1, yx_1yx_2+yx_2yx_2-2yx_2,\\
			&x_1yx_1y+x_1yx_2y - 2x_1y, x_2yx_1y+x_2yx_2y- 2x_2y\}.
		\end{align*} 
\end{example}	 
\begin{example}[Asymptotic Hecke Algebra]\label{subsec:AHA}
Let $\Gamma$ be the pre-$D_{\mathbb{Z}}$-graph where $(V, E)$ is the complete quiver on the vertex set $S$,  and for $s \in V$, $\mathcal{L}(s)=\{s\}$. Let $(r,s) \in S^2_{fin}$ with $r\neq s$, and let $\Gamma^\mathcal{U}$ be the universal quiver with respect to $r,s$. To avoid confusion we use the original notation for the vertices in $\Gamma^\mathcal{U}$ (i.e, $V^{\Gamma^\mathcal{U}}=\{x_r,x_s,x_{\varnothing},x_\wp\}$, see Figure \ref{fig:univPDG}). Then $\psi_{r,s}(x_r)=r$, $\psi_{r,s}(x_s)=s$, $\psi_{r,s}(x_{\varnothing})=\sum_{t \in S\setminus \{r,s\}} t$ and $\psi_{r,s}(x_\wp) = 0$. 

By the above and Corollary \ref{cor:finitequiver} we conclude that the relations coming from $(\rho_r\rho_s\rho_s \cdots)_{m_{r,s}}- (\rho_s\rho_r\rho_s \cdots)_{m_{r,s}}=0$ are 
	\[ \psi_{r,s}(d_{m_{r,s}}(x_r,x_s))=0, \quad \psi_{r,s}(d_{m_{r,s}}(x_s,x_r))=0.\]
Similar to $c_m(r,s) \in \mathbb{Z}[\Gamma^{\mathcal{U}}]$, lets define
\begin{equation*}
 \mathfrak{c}_m(r,s) =\cm{m}{r}{s} \in \mathbb{Z}[\Gamma],
	\end{equation*}
Let $\mathfrak{d}_m(r,s)=\mathfrak{c}_m(r,s)$ if $m$ is even and $\mathfrak{d}_m(r,s)=\mathfrak{c}_m(s,r)$ if $m$ is odd. Then
the relations defining $J_0^\Gamma$ are $\mathfrak{d}_{m_{r,s}}(r,s)=0$ and $\mathfrak{d}_{m_{r,s}}(s,r)=0$. It follows then that $\mathcal{R}_0^\Gamma = \mathbb{Z}[\Gamma]/ J_0$ where $J_0$ is generated by the elements $\mathfrak{c}_m(r,s)$ for all $(r,s) \in S^2_{fin}$ with $r\neq s$. It is known by Matthew Dyer \cite{Dyer1} that the ring $\mathcal{R}_0^\Gamma$ is isomorphic to the ideal $J^c$ of the asymptotic Hecke-algebra associated with the union of two-sided cells with $a$-value 1 (see \cite{L1}, Chapter 18). This theory provides a natural construction of $J^c$ when defined (the asymptotic Hecke algebra has been proved to exist in numerous cases but it is still conjectural for a general Coxeter group).
\end{example}

\section{$D$-graphs over non-commutative algebras}\label{sec:WGONCA}
\subsection{Introduction}
	In this section we show how the main objects of study in this paper can be seen as examples of ``$D$-graphs over non-commutative algebras".
	Let $(D=(W,S,A,\allowbreak (a_r)_{r\in S},(b_r)_{r\in S}),A_0)$ be an extended Hecke datum.
	\begin{definition}\label{def:Dgraphovernca}
		A $D$-graph over a (posibly non-commutative) $A$-algebra $R$ is a pre-$D$-graph $\Gamma$ together with a map $\mu:E^{\Gamma} \rightarrow R$ such that
		\begin{enumerate}[(i)]
			\item $R$ is an $A$-algebra with a set of enough orthogonal idempotents \small{$\{e_x\}_{x\in V^{\Gamma}}$}
			\item $\mu^{\Gamma} (x\xleftarrow{f} y) \in e_x Re_y$ for all $f\in E^{\Gamma}$
			\item  \label{property:taubraid}for all $(r, s) \in S^2_{fin}$ we have 
			\[ (\tau^\Gamma_r \tau^\Gamma_s \tau^\Gamma_r \dots)_{m_{r,s}} - ( \tau^\Gamma_s \tau^\Gamma_r \tau^\Gamma_s \cdots)_{m_{r,s}} = 0 \in End_R(R),\] 
		where 
			\begin{equation*}\label{eq:taugeneral}
				 \tau_r^\Gamma (e_x) = \begin{cases}
				a_r e_x + \sum\limits_{\substack{y,f|x \xleftarrow{f} y \\r \in \mathcal{L}(y)}}\mu(f)e_y & \text{ if $r \not\in \mathcal{L}(x)$}\\
				-b_r e_x & \text{ if $r \in \mathcal{L}(x).$}\end{cases}
			\end{equation*}
		\end{enumerate}
	\end{definition}  
	\begin{proposition}\label{prop:genrep}
		Given a $D$-graph $\Gamma$ over an $A$-algebra $R$ we get a representation of the Hecke algebra $\mathcal{H}(D)$ given by $\tau^\Gamma:\mathcal{H}(D) \to End_A(R)$ such that $\tau^\Gamma(T_r) = \tau^\Gamma_r$.
	\end{proposition}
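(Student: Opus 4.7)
The plan is to adapt the argument of Proposition \ref{prop:tauwd} (and the parallel argument in Proposition \ref{prop:rhodef}) to this more general setting, where the only substantive change is that the coefficients $\mu(f)$ now live in the non-commutative algebra $R$ rather than in $A$. The three things to verify are: (a) each $\tau^\Gamma_r$ is a well-defined element of $End_A(R)$; (b) the braid relations hold (these are built into the definition); and (c) the quadratic relations $(\tau^\Gamma_r - a_r\,\id_R)(\tau^\Gamma_r + b_r\,\id_R) = 0$ hold. Once all three are in place, the universal property of the Hecke algebra $\mathcal{H}(D)$ produces the desired $A$-algebra homomorphism $\tau^\Gamma$ with $\tau^\Gamma(T_r) = \tau^\Gamma_r$.

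For (a), I would argue as follows. Since $R$ has enough orthogonal idempotents $\{e_x\}_{x \in V^\Gamma}$, Proposition \ref{prop:quotientidemp}(\ref{property:isoend}) gives the identification $End_R(R) \cong \prod_{x} e_x R$, so a left $R$-module endomorphism of $R$ is specified precisely by choosing an element of $e_x R$ for each $x$. The formula defining $\tau^\Gamma_r(e_x)$ produces such an element: when $r \in \mathcal{L}(x)$ the value $-b_r e_x$ lies in $e_x R$ trivially, and when $r \notin \mathcal{L}(x)$ condition \ref{WDHecke1} of Definition \ref{def:preDgraph} guarantees the sum is finite, while the assumption $\mu^\Gamma(x \xleftarrow{f} y) \in e_x R e_y \subseteq e_x R$ guarantees each summand lies in $e_x R$. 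Hence $\tau^\Gamma_r \in End_R(R) \subseteq End_A(R)$.

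For (b), the braid relations are immediate from condition (iii) of Definition \ref{def:Dgraphovernca}. For (c), I would use the (now standard) trick from Proposition \ref{prop:tauwd}: since $\tau^\Gamma_r$ is $A$-linear, the two factors $(\tau^\Gamma_r - a_r\,\id_R)$ and $(\tau^\Gamma_r + b_r\,\id_R)$ commute, so it suffices to check vanishing on each $e_x$ after applying whichever factor zeroes it out. If $r \in \mathcal{L}(x)$, then $(\tau^\Gamma_r + b_r\,\id_R)(e_x) = 0$ directly. If $r \notin \mathcal{L}(x)$, then
\[
(\tau^\Gamma_r - a_r\,\id_R)(e_x) \;=\; \sum_{\substack{y,f\,:\,x \xleftarrow{f} y\\ r \in \mathcal{L}(y)}} \mu^\Gamma(f)\,e_y,
\]
and applying $(\tau^\Gamma_r + b_r\,\id_R)$ term-by-term, together with left $R$-linearity of $\tau^\Gamma_r$, gives $\mu^\Gamma(f)\bigl(\tau^\Gamma_r(e_y) + b_r e_y\bigr) = \mu^\Gamma(f)(-b_r e_y + b_r e_y) = 0$ for each $y$ in the sum, since $r \in \mathcal{L}(y)$.

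The main obstacle I anticipate is purely bookkeeping: one must be careful that $\tau^\Gamma_r$ is intended as a left $R$-module endomorphism (so that the identity $\tau^\Gamma_r(\mu(f)e_y) = \mu(f)\tau^\Gamma_r(e_y)$ used in step (c) is legitimate), and that the extension from ``defined on idempotents'' to ``defined on all of $R$'' is made via Proposition \ref{prop:quotientidemp}(\ref{property:isoend}) rather than a naive linear extension; once this framework is set up, the verification is a direct parallel of the earlier proofs and presents no further difficulty.
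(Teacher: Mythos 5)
Your proposal is correct and follows essentially the same route as the paper, whose proof of this proposition simply refers back to Proposition~\ref{prop:tauwd}: identify $\tau^\Gamma_r$ as an element of $End_R(R)\subseteq End_A(R)$ via Proposition~\ref{prop:quotientidemp}(\ref{property:isoend}), take the braid relations from condition (\ref{property:taubraid}), and check the quadratic relation on each idempotent $e_x$ using the commutativity of the two factors. Spelling out that the left $R$-linearity of $\tau^\Gamma_r$ is what lets the non-commutative coefficients $\mu(f)$ pass through is a useful clarification, but it is the same argument.
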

	\begin{proof}
		This proof follows as in the proof of Proposition \ref{prop:tauwd}.
	\end{proof}
	\subsection{Examples}
	
\begin{example}[Quotient path algebra]\label{ex:qpa}Given an extended Hecke datum $(D,A_0)$ and a pre-$D$-graph $\Gamma$ as in Definition \ref{def:preDgraph} we obtain a $D$-graph over (the quotient path algebra) $\mathcal{R}^\Gamma$, where $\mathcal{R}^\Gamma$ is an $A$-algebra with enough orthogonal idempotents $\{[x]| x\in V^{\Gamma}\}$ and map $\mu^{\Gamma}$ defined as $\mu^{\Gamma}(x\xleftarrow{f}y) := [x\xleftarrow{f}y] \in \mathcal{R}^\Gamma$ for all $x\xleftarrow{f}y \in E^{\Gamma}$.  By the way we constructed $\mathcal{R}$, property $(\ref{property:taubraid})$ in Definition \ref{def:Dgraphovernca} is guaranteed to be satisfied. In this specific example, the representation we obtain from Proposition \ref{prop:genrep} is denoted as $\trho^{\Gamma}$ (as it has been denoted throughout this document). In Section \ref{sec:univ} we discuss a ``universality" property of $\trho^{\Gamma}$.
\end{example}
	\begin{example}[Kazhdan-Lusztig module]
Given a $D$-graph $\Gamma$ (in the original sense) as in Definition \ref{def:preDgraph}, we use the Morita equivalence result to construct the $A$-module (in fact, $A$-algebra) $A^{V,V}$ as established in Section \ref{subsec:morita}. It is not hard to see that the set $\{e_{x,x}|x\in V^{\Gamma}\}$ is a set of enough orthogonal idempotents for $A^{V,V}$. Consider the map $\mu^{\Gamma}(x\xleftarrow{f}y)=\mu(e)e_{x,y} \in e_{x,x}A^{V,V}e_{y,y}$ (where $\mu$ is the map coming from the $D$-graph). Under these conditions $\Gamma$ is a $D$-graph over the $A$-algebra $A^{V,V}$. 
\end{example}
\subsection{Universality of $\trho^{\Gamma}$}\label{sec:univ}
		The representation $\trho^{\Gamma}$ is universal in the following sense: let $\Gamma$ be any $D$-graph over an $A$-algebra $R$, and suppose $R$ is a free $A$-module (e.g. $A^{V,V}$), and $\tau^\Gamma$ be the representation of $\mathcal{H}(D)$ described in Proposition \ref{prop:genrep}, then $\tau^\Gamma$ is obtained as a ``specialization" of the representation $\trho^{\Gamma}$ given by the $D$-graph $\Gamma$ over the quotient path algebra $\mathcal{R}^\Gamma$ (as described in Example \ref{ex:qpa}). We make the statement precise in the following theorem. 
	\begin{theorem}\label{thm:universality}
		Let $\Gamma$ be any $D$-graph over an $A$-algebra $R=\oplus_{x,y \in V} e_xRe_y$, and assume $R$ is a free $A$-module. Let $\tau^\Gamma$ be the representation of $\mathcal{H}(D)$ described in Proposition \ref{prop:genrep}, and $\trho^{\Gamma}$ be the representation of $\mathcal{H}(D)$ described in Example \ref{ex:qpa} on the quotient path algebra $\mathcal{R}^\Gamma$. Let $\Theta$ be the map from $A[\Gamma]$ to $R$ that sends 
	\[x\in V^\Gamma \mapsto e_x \text{\ and \ } [x\xleftarrow{e}y] \mapsto \mu(e) \in e_xRe_y.\]
			Then 
			\begin{enumerate}[(I)]
				\item $\Theta$ is an $A$-algebra homomorphism. \label{property:homo}
				\item For $r_1, r_2, \dots, r_n$ we get 
				\[(\tau_{r_1}^\Gamma\tau_{r_2}^\Gamma\cdots \tau_{r_n}^\Gamma)\circ \Theta = \Theta\circ(\rho_{r_1}^\Gamma\rho_{r_2}^\Gamma\cdots \rho_{r_n}^\Gamma).\]\label{property:commute}
				\item The map $\Theta$ factors through $\mathcal{R}^\Gamma$ i.e., there is a map $\tTheta: \mathcal{R}^\Gamma \to R$ such that $\tTheta([p]) = \Theta(p)$ for all $p \in A[\Gamma]$.\label{property:factors}
				\item For any $r \in S$, $\tau_r^\Gamma(e_x) = \tTheta(\trho^{\Gamma}_r(x))$. \label{property:spec}
				\item $\tTheta$ is an equivariant map (i.e., $\tau^\Gamma\circ \tTheta = \tTheta\circ\trho^\Gamma.$)\label{property:Tequiv}
			\end{enumerate}
	\end{theorem}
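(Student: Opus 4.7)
The plan is to mirror the proofs of Theorems \ref{thm:eqmap} and \ref{thm:eqmapmorita}, with the general $A$-algebra $R$ playing the role of $A^{V,V}$.

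For $(\ref{property:homo})$, I would invoke the presentation of $A[\Gamma]$ given in Remark \ref{rmk:altPA} and verify the three defining families of relations on the images. The relations $x^2 = x$ and $xy = 0$ (for distinct $x,y \in V^\Gamma$) go to $e_x^2 = e_x$ and $e_x e_y = 0$, which hold by the enough-orthogonal-idempotents hypothesis on $R$. The relations $x(x\xleftarrow{e}z) = (x\xleftarrow{e}z) = (x\xleftarrow{e}z)z$ go to $e_x \mu(e) = \mu(e) = \mu(e) e_z$, which is exactly the hypothesis $\mu(e) \in e_x R e_z$ from Definition \ref{def:Dgraphovernca}.

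For $(\ref{property:commute})$, I would argue by induction on $n$. The base case $n = 1$ reduces by $A$-linearity to showing $\tau_r^\Gamma(\Theta(p)) = \Theta(\rho_r^\Gamma(p))$ for every path $p \in P^\Gamma$. Writing $p = p \cdot s(p)$ in $A[\Gamma]$ and, via $(\ref{property:homo})$, $\Theta(p) = \Theta(p) \cdot e_{s(p)}$ in $R$, and using that $\rho_r^\Gamma$ is left $A[\Gamma]$-linear and $\tau_r^\Gamma$ is left $R$-linear, both sides collapse to $\Theta(p) \cdot \Theta(\rho_r^\Gamma(s(p)))$ and $\Theta(p) \cdot \tau_r^\Gamma(e_{s(p)})$ respectively. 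These agree by direct comparison of the defining formulas on a single vertex, noting that the factors $\mu(e)e_y$ appearing in $\tau_r^\Gamma(e_{s(p)})$ simplify to $\mu(e)$ since $\mu(e)\in e_x R e_y$. Induction then gives the general case.

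Part $(\ref{property:factors})$ is the heart of the argument. Applying $\Theta$ to the defining braid expansion
\[\sum_{i,p} c_i X_{p,i}^{r,s,x} p = \bigl((\rho_r^\Gamma\rho_s^\Gamma\cdots)_{m_{r,s}} - (\rho_s^\Gamma\rho_r^\Gamma\cdots)_{m_{r,s}}\bigr)(x),\]
and using $(\ref{property:commute})$, yields $\sum_i c_i \, \Theta(\sum_p X_{p,i}^{r,s,x} p) = \bigl((\tau_r^\Gamma\tau_s^\Gamma\cdots)_{m_{r,s}} - (\tau_s^\Gamma\tau_r^\Gamma\cdots)_{m_{r,s}}\bigr)(e_x) = 0 \in R$, where the right-hand side vanishes by property $(\ref{property:taubraid})$ of Definition \ref{def:Dgraphovernca}. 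Since $R$ is free over $A$ and $A$ is free over $A_0$, $R$ is free over $A_0$, and an argument identical to the one used in the proof of Theorem \ref{thm:eqmapmorita}$(\ref{property:Ufactor})$ allows me to peel off each $c_i$-summand and conclude $\Theta(\sum_p X_{p,i}^{r,s,x} p) = 0$ for every $i$. Hence $\Theta$ annihilates every generator of $J_0^\Gamma$; since $J^\Gamma = A\otimes_{A_0} J_0^\Gamma$, this gives $\Theta(J^\Gamma) = 0$ and produces the desired $A$-algebra homomorphism $\tTheta\colon \mathcal{R}^\Gamma \to R$. I expect this $c_i$-peeling step to be the subtlest point, as it rests on exactly the bookkeeping that underlies the basis-independence of $J_0^\Gamma$ established in Section \ref{sec:QPA:subsec:Intro}.

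Parts $(\ref{property:spec})$ and $(\ref{property:Tequiv})$ then follow formally. For $(\ref{property:spec})$, combining $(\ref{property:commute})$ at $n=1$, the factorization from $(\ref{property:factors})$, and the identity $\pi\circ\rho_r^\Gamma = \trho_r^\Gamma\circ\pi$ recorded in Section \ref{sec:QPA:subsec:Intro} gives $\tau_r^\Gamma(e_x) = \tau_r^\Gamma(\Theta(x)) = \Theta(\rho_r^\Gamma(x)) = \tTheta(\pi(\rho_r^\Gamma(x))) = \tTheta(\trho_r^\Gamma([x]))$. For $(\ref{property:Tequiv})$, since $\mathcal{H}(D)$ is generated as an $A$-algebra by $\{T_r\}_{r\in S}$, a routine induction on the length of a monomial in the generators reduces the claim to $\tTheta\circ\trho_r^\Gamma = \tau_r^\Gamma\circ\tTheta$ for each $r$. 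By $A$-linearity and the decomposition $\mathcal{R}^\Gamma = \oplus_x \mathcal{R}^\Gamma[x]$, we need only check on elements of the form $\beta[x]$. Using that $\trho_r^\Gamma$ is left $\mathcal{R}^\Gamma$-linear, $\tau_r^\Gamma$ is left $R$-linear, and $\tTheta$ is an algebra homomorphism, both sides collapse to $\tTheta(\beta)\,\tTheta(\trho_r^\Gamma([x]))$ and $\tTheta(\beta)\,\tau_r^\Gamma(e_x)$ respectively, which agree by $(\ref{property:spec})$.
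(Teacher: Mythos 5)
Your proposal is essentially an honest unfolding of the paper's one-line proof, which simply points back to Theorem \ref{thm:eqmapmorita} and Lemma \ref{lemma:utaumorita}. Parts $(\ref{property:homo})$, $(\ref{property:commute})$, $(\ref{property:spec})$, and $(\ref{property:Tequiv})$ are handled correctly and in the same spirit as the paper.

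The problem is the $c_i$-peeling step in part $(\ref{property:factors})$, which you correctly single out as the subtlest point but for which your justification does not hold up. From $\sum_i c_i\,\Theta\bigl(\sum_p X_{p,i}^{r,s,x}p\bigr)=0$ one cannot in general conclude that each $\Theta\bigl(\sum_p X_{p,i}^{r,s,x}p\bigr)$ vanishes merely because $R$ is free over $A_0$. Concretely: write $\Theta\bigl(\sum_p X_{p,i}^{r,s,x}p\bigr)=\sum_p X_{p,i}^{r,s,x}\Theta(p)$, and note that the $\Theta(p)$, being arbitrary elements of $e_xRe_{s(p)}$, may satisfy relations that interact nontrivially with the $c_i$. (For instance if $\Theta(p_1)=c_j\,\Theta(p_2)$ for two paths $p_1,p_2$ with the same endpoints, a single $A$-linear relation $\sum_i c_i r_i=0$ with $r_i\in R$ can hold with all $r_i\neq 0$.) Your appeal to the basis-independence argument from Section \ref{sec:QPA:subsec:Intro} is the wrong analogy: that argument peels off $c_i$ inside $A_0[\Gamma]$, where the paths are a free $A_0$-basis, so every element has a unique $c_i$-expansion; here the elements live in $R$ after applying $\Theta$, and uniqueness of $c_i$-expansion is exactly what is not available. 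I should note that the same issue already appears in the paper's own proof of Theorem \ref{thm:eqmapmorita}$(\ref{property:Ufactor})$, which asserts the peeling from ``$A^{V,V}$ is a free $A_0$-module'' without further justification; so your proposal faithfully reproduces a step the paper also leaves unsupported. What would repair it is an extra hypothesis ensuring that $\Theta$ maps $A_0[\Gamma]$ into an $A_0$-form of $R$ compatible with the decomposition $R=\bigoplus_k A\beta_k$ --- e.g., requiring $\mu$ to take values in $\bigoplus_k A_0\beta_k$ (compare Corollary \ref{cor:Uxsurjective}, where the hypothesis $\mu(e)\in A_0$ is made explicit); under such a hypothesis the relation $\sum_i c_i r_i=0$ with each $r_i$ in the $A_0$-form does force $r_i=0$.
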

\begin{proof}
The proof follows similar to those in Theorem \ref{thm:eqmapmorita} and Lemma \ref{lemma:utaumorita}.\end{proof}

\section*{Acknowledgments}
The author would like to thank Matthew Dyer for numerous enlightening conversations on these topics. 


\begin{thebibliography}{0}

\bibitem{BB}
A.~Bjorner and F.~Brenti.
\newblock {\em Combinatorics of Coxeter Groups}.
\newblock Springer Science+Business Media, Inc., 2005.

\bibitem{CurtisMorita}
C.~W. Curtis and I.~Reiner.
\newblock {\em Methods of Representation Theory-with applications to finite
  groups and orders}, volume~1 of {\em Pure and Applied Mathematics (Wiley)}.
\newblock John Wiley and Sons, Inc., 1981.

\bibitem{Dyer1}
M.~Dyer.
\newblock Root systems over non-commutative rings {i}.
\newblock {\em Paper in preparation}, 2015.

\bibitem{HN-WGI}
R.~Howlett and V.~M. Nguyen.
\newblock {W}-graph ideals.
\newblock {\em J. Algebra}, 361:188--212, 2012.

\bibitem{HY-IWG}
R.~Howlett and Y.~Yunchuan.
\newblock Inducing {W}-graphs.
\newblock {\em Math. Z.}, 244(2):415--431, 2003.

\bibitem{HY-IWG2}
R.~Howlett and Y.~Yunchuan.
\newblock Inducing {W}-graphs {II}.
\newblock {\em Manuscripta Math.}, 115(4):495--511, 2004.

\bibitem{KL1}
D.~Kazhdan and G.~Lusztig.
\newblock Representations of {C}oxeter groups and {H}ecke algebras.
\newblock {\em Invent. Math.}, 53(2):165--184, 1979.

\bibitem{L2}
G.~Lusztig.
\newblock Left cells in {W}eyl groups.
\newblock {\em Springer,Lie Group Representations. I, Lecture Notes in Math.},
  1024:99--111, 1983.

\bibitem{L1}
G.~Lusztig.
\newblock {\em Hecke Algebra with Unequal Parameters}, volume~18 of {\em CRM
  Monograph Series}.
\newblock AMS, 2003.

\bibitem{N-WGI2}
V.~M. Nguyen.
\newblock {W}-graph ideals {II}.
\newblock {\em J. Algebra}, 361:248--263, 2012.

\bibitem{JS}
J.~Stembridge.
\newblock Admissible {W}-graphs.
\newblock {\em Representation Theory}, 12:346--368, 2008.

\bibitem{JS-AFTFWG}
J.~Stembridge.
\newblock A finiteness theorem for {W}-graphs.
\newblock {\em Adv. Math.}, 229(4):2405--2414, 2012.

\bibitem{YIN-IWGFSG}
Y.~Yunchuan.
\newblock Inducing {W}-graphs for symmetric groups.
\newblock {\em Internat. J. Algebra Comput.}, 21(6):961--972, 2011.


\end{thebibliography}
\end{document}